\newtheorem{theorem}{Theorem}[section]
\newtheorem{lemma}[theorem]{Lemma}
\newtheorem{prop}[theorem]{Proposition}
\theoremstyle{definition}
\newtheorem{conj}[theorem]{Conjecture}
\newtheorem{defn}[theorem]{Definition}
\newtheorem{example}[theorem]{Example}
\newcommand{\mbf}{\mathbf}
\newcommand{\mbb}{\mathbb}
\newcommand{\mcal}{\mathcal}
\newcommand{\mscr}{\mathscr}
\newcommand{\mrm}{\mathrm}
\newcommand{\mrk}{\mathfrak}
\def\ro{\text{ro}}
\def\co{\text{co}}
\def\mf{\mathfrak}
\def\diag{\text{diag}}
\newcommand{\Ub}{\mrm U^{\mrk b}}
\newcommand{\Uc}{\mrm U^{\mrk c}}
\newcommand{\U}{\mrm U}
\title[Equivariant K-theory approach to $\imath$-quantum groups]{Equivariant K-theory approach to $\imath$-quantum groups}
\author[Z. Fan, H. Ma and H. Xiao]{Zhaobing Fan, Haitao Ma and Husileng Xiao}
\address{College of mathematics science, Harbin Engineering University, Harbin, 15001, China.}
\email{fanz@math.ksu.edu(Z. Fan) \\
 hmamath@hrbeu.edu.cn(H. Ma) \\
 hslxiao@hrbeu.edu.cn(H. Xiao)}
\date{\today}
\keywords{}
\subjclass{}
\begin{document}

\begin{abstract}
Various constructions for quantum groups have been generalized to $\imath$-quantum groups.
Such generalization is called $\imath$-program.
In this paper, we fill one of parts in the $\imath$-program.
Namely, we provide an equivariant K-theory approach to $\imath$-quantum groups associated to the Satake diagram in \eqref{eq1},
which is the Langlands dual picture of that constructed in \cite{BKLW14},
where a geometric realization of the $\imath$-quantum group is provided by using perverse sheaves.
As an application of the main results, we prove Li's conjecture \cite{L18} for the special cases with the satake diagram in \eqref{eq1}.
\end{abstract}

\maketitle

\setcounter{tocdepth}{1}
\tableofcontents

\section{Introduction}

\subsection{Quantum symmetric pairs}
Let $\mf g$ be a Lie algebra, and $\theta: \mf g\rightarrow \mf g$ be an involution on $\mf g$.
Let $\mf g^{\theta}$ be the fixed point subalgebra of $\mf g$.
The pair $(\mf g, \mf g^{\theta})$ is called a symmetric pair, and
the quantization of their universal enveloping algebra, denoted by $(\U_q(\mf g), \U_q(\mf g^{\theta}))$, is called a quantum symmetric pair.
The algebra $\U_q(\mf g^{\theta})$ itself is called an $\imath$-quantum group.

Due to \'E. Cartan, $\imath$-quantum groups can be classified in term of Satake diagrams \cite{Ar62},
which are bicolor Dynkin diagrams with black or white vertices and a diagram involution fixing all black vertices.
Quantum symmetric pairs are systemically studied by Letzter \cite{Le02, Le99} for finite types, and by Kolb \cite{Ko14} for Kac-Moody cases.
In general, the $\imath$-quantum group, $\U_q(\mf g^{\theta})$, is a coideal subalgebra of $\U_q(\mf g)$.
In the following two cases, $\imath$-quantum groups are degenerated to the ordinary quantum groups.
The first one is when vertices in Satake diagrams are all black.
The second one is so called quasi-split case with diagonal type, see the definition in \cite{LW19}.

\subsection{$\imath$-program}
From the above point of view, $\imath$-quantum groups are highly nontrivial generalization of the ordinary quantum groups.
Generalizing various constructions for quantum groups to $\imath$-quantum groups is called $\imath$-program, which is proposed by Bao and Wang in \cite{BW18a}.
Let us recall some constructions for quantum groups and their generalization to $\imath$-quantum groups.

Canonical basis theory for quantum groups, introduced by Lusztig in \cite{Lu90, Lu91}, is a milestone in representation theory.
The generalization of canonical bases and R-matrix to $\imath$-quantum groups were given by Bao and Wang in \cite{BW18a, BW18b},
in which $\imath$-canonical bases were defined, and by Balagovic and Kolb in \cite{BK19}, respectively.

In \cite{BKLW14} (resp. \cite{FL15}, \cite{FLLLW1}), authors considered the convolution algebras on double flag varieties associated to the algebraic group $O_{2r+1}(\mbb F_q)$
(resp. $So_{2r}(\mbb F_q)$, $Sp_{2r}(\mbb F_q((t)))$),
which are $q$-Schur type algebras.
The geometric realizations of the corresponding $\imath$-quantum groups are hence obtained.
We shall call this approach BLM realization since it was firstly given by Belinson, Lusztig and MacPherson in \cite{BLM90} for the case of type A.

It has been realized by Deng, Du, Parshall, and Wang that BLM construction has an algebraic counterpart.
The construction for type A was given in \cite{DDPW}.
In \cite{FLLLW2}, a Hecke algebraic approach to $\imath$-quantum groups and the corresponding affine $q$-Schur algebras was developed,
which is the algebraic counterpart of the geometric approach studied in \cite{FLLLW1}.
The algebraic approach to other types has been studied in \cite{LL15, LW17, LL18}.

A categorification of quantum groups was given independently by Khovanov and Lauda in \cite{KL09, KL10}, and Rouguier in \cite{R08} by using KLR algebra.
 This work had been generalized to the $\imath$-quantum group $\Ub$ (denoted by $\U^{\jmath}$ therein) by Bao, Shan, Wang and Webster in \cite{BSWW}.

In \cite{Ri90}, Ringel constructed a Hall algebra associated to a Dynkin quiver and shown that it was isomorphic to the half part of the corresponding quantum group.
In \cite{B13}, Bridgeland realized the entire quantum groups by using Hall algebra of two-step complexes.
Rencently, Lu and Wang provided a Hall algebra approach to $\imath$-quantum groups for finite types in \cite{LW19}.
In \cite{L18}, Li defined $\sigma$-quiver varieties for $\imath$-quantum groups, which are analogies of Nakajima quiver varieties.

\subsection{Langlands reciprocity}

It is well-known that affine quantum groups have two different algebraic presentations.
One is called Drinfeld-Jimbo realizations, the other one is called Drinfeld new realizations.
The geometric counterpart for the case of affine type A also has two different approaches.
The first one is using perverse sheaves on double flag varieties associated to the corresponding algebraic group $GL(\mathbb F_q((t)))$,
and
the second one is using equivariant K-theory on Steinberg varieties associated to $GL(\mbb C)$.
The second construction yields a classification of irreducible finite dimensional $\U_q(\widehat{\mathfrak{sl}_n})$-module as well as their character formulas.
Ginzburg, Reshetikhin and  Vassrot further provided a geometric realization of Schur-Weyl duality of affine type A in \cite{GRV94} by using equivariant K-theory.
For Hecke algebras,  Iwahori and Matsumoto gave a BLM-type construction of the affine Hecke algebra associated to algebraic group $G$.
The equivariant K theory construction of affine Hecke algebra was first done by Kazhdan and Lusztig \cite{Lu85, KL85, KL87}, and then improved by Ginzburg \cite{G87}.
 Namely,  there exists an equivariant K-theory approach to the affine Hecke algebra by using the Steinberg variety associated with ${}^L\!G$, where ${}^L\!G$ is the Langlands dual of $G$.
This suggests that a mysterious link exists between the BLM realization and the equivatiant K-theory realization.
Such relationship is so called Langlands reciprocity for affine quantum group of type $A_n$ in \cite{GV93}.
In light of  $\imath$-program, the BLM realization of the $\imath$-quantum group $\U^{\widetilde{\mathfrak c}}$ (denoted by $\U^{\jmath\jmath}$ therein),
and the equivariant K-theoretic approach to affine quantum groups,
they strongly suggest the existence of equivaraint K-theoretic approach to the $\imath$-quantum group $\U^{\widetilde{\mathfrak c}}$,
which is expected by Wang and Li earlier.
This is the original motivation of this project.

\subsection{Main results}
Since it is hard to find the Drinfeld new realization of $\U^{\widetilde{\mathfrak c}}$,
we shall consider a toy case in this paper, the $\imath$-quantum groups $\Uc$ and $\Ub$, whose
satake diagram is as follows.
\begin{equation}\label{eq1}
\begin{tikzpicture}[baseline = 0, scale =1.3]
		\node at (-1.5,0) {$\circ$};
		\draw (-1.4,0) to (-1.1,0);
		\node at (-1,0) {$\circ$};
		\draw (-0.9,0) to (-0.7,0);
        \draw[dashed] (-0.6,0) to (0.6,0);
		\draw (0.7,0) to (0.9,0);
		\node at (1.0,0) {$\circ$};
		\draw (1.1,0) to (1.4,0);
		\node at (1.5,0) {$\circ$};
		\draw[<->] (-1.4, 0.1) to[out=45, in=180] (-1.0, 0.4) to (1.0, 0.4) to[out=0, in=135] (1.4, 0.1);
		\draw[<->] (-0.9, 0.1) to[out=45, in=180] (-0.65, 0.2) to (0.65, 0.2) to[out=0, in=135] (0.9, 0.1);
	\end{tikzpicture}
\end{equation}

 This paper devote to study $\imath$-quantum group $\Uc$ in term of equivariant K-groups on Steinberg variety of type C, including its generators, relations and the coideal structure.
More precisely, we have the following theorem.

{\bf Theorem}[Theorems \ref{main theorem C} and \ref{thm-coideal}]
Let $\mcal{Z}^{\mf c}$ (resp. $\mcal{Z}^{\mf a}$) be the Steinberg variety of type C (resp. type A).
Then we have the following two algebra homomorphisms.
\begin{itemize}
 \item[(a)] $\Ub \longrightarrow K^{\mathbb{C}^{*} \times Sp_d(\mbb C)}(\mcal{Z}^{\mf c})_{\mathbb{K}}$.

 \item[(b)]$\Delta:\mathbb{C}_\alpha \otimes K^{ \mathbb{C}^{*}\times Sp_d(\mbb C)}(\mcal{Z}^{\mf c}) \longrightarrow \mathbb{C}_\alpha \otimes K^{\mathbb{C}^{*} \times GL_{d_1}  } (\mcal{Z}_{d_1}^{\mf a})\otimes   K^{\mathbb{C}^{*} \times Sp_{d_2}(\mbb C)}(\mcal{Z}_{d_{2}}^{\mf c})$,\\
 where $d_1+d_2=d$ and $\mbb C_{\alpha}$ is a 1-dimensional representation depending on a semisimple element $\alpha\in \mbb C^*\times Sp_d(\mbb C)$, see details in Section \ref{coideal C}.
\end{itemize}

Although $\Uc$ is isomorphic to $\Ub$ in  \cite{BKLW14},
we note that $K^{\mathbb{C}^{*} \times G_{\mf b}}(\mcal{Z}^{\mf b})_{\mathbb{K}}$ is not isomorphic to $K^{\mathbb{C}^{*} \times G}(\mcal{Z}^{\mf c})_{\mathbb{K}}$ in general.
From this point of view, $\Ub$ has two different approaches.
Although these two constructions are quite similar, the polynomial representations are different,
see Theorem \ref{polynomial rep C} and Proposition \ref{representation prop B}.

In \cite{L18}, Li constructs $\sigma$-quiver variety $\mathfrak M(\mbf v, \mbf w)^{\sigma}$,
which is the fixed point subvariety of Nakajima quiver variety by $\sigma$ and
conjectures that there is an algebra homomorphism from the universal enveloping algebra, $\U(\mathfrak g^{\theta})$, of the fixed point Lie subalgebra of $\mathfrak g$
to the top degree Broel-Moore homology $H_{{\rm top}}(\mathfrak Z)$ if $\mathfrak g$ is of finite types, where $\mathfrak Z$ is the Steinberg type $\sigma$-quiver variety.
See \cite{L18} for more details.
As an application of the main results, we prove Conjecture 5.3.4 in \cite{L18} for the special cases with the  Satake diagram in \eqref{eq1}.

\subsection{Organizations}
 This paper is organized as follows.
 In  Section 2,
 we calculate equivariant K-group on the generalized Steinberg varieties of type C,
 and its module structure on $K^{\mbb C^* \times G}(\mcal M)$,
 where $\mcal M$ is the cotangent bundle of flag varieties of type C.
 In Section 3,
 we briefly review the presentation of $\imath$-quantum group $\Ub$ and its coideal structure.
 We also prove the main results of the paper, namely, there is an algebra homomorphism from $\Ub$ to the convolution algebra arising from generalized Steinberg varieties of type C [Theorem \ref{main theorem C}].
Moreover, we provide an K-theoretic approach to the coideal structure of $\Ub$ by using the bivariant localization theorem.
  In Section 4.
  we review the presentation of $\imath$-quantum group $\Uc$ and prove similar results as that for $\Ub$ in Section 3.
  As an application, we prove Li's conjecture for the cases we consider in this paper.
 The proof of Proposition \ref{representation prop} is combinatorially involved, so
we move the detailed proof of it to the appendix.\vspace{7pt}

\noindent\textbf{Acknowledgements.}
We thank
C. Lai, L. Luo, Y. Li, and W. Wang for helpful discussions.
The first author thanks University of Virgina for the support and hospitality.
Z. Fan is partially supported by the NSF of China grant 11671108, the NSF of Heilongjiang
Province grant LC2017001 and the Fundamental Research Funds for the central universities GK2110260131.
H. Ma is partially supported by the NSF of China grant 11571119.
H. Xiao is partially supported by the NSF of China grant 11801113.

\section{Convolution algebra in equivariant K-theory}

\subsection{Equivariant K-theory}\label{sec2.1}

Let $G$ be a linear algebraic group.
Given a $G$-variety $X$,
let $Coh^G(X)$ be the category of $G$-equivariant coherent sheaves on $X$,
and $K^G(X)$ be the complexified Grothendieck group of $Coh^G(X)$.
For any coherent sheaf $\mscr F\in Coh^G(X)$, denote by $[\mscr F]$ its equivalent class in $K^G(X)$.

In the special case that $X= \{pt\}$, a $G$-equivariant sheaf on $X$ is just a $G$-vector space.
Hence $Coh^G(pt) = Rep(G)$, the category of representations of $G$, and $K^G(pt) = R(G)$, the complexified representation ring of $G$.
Moreover, for any $G$-variety $X$, the external tensor product defines a $R(G)$-module structure on $K^G(X)$.

Given three smooth $G$-varieties $M_1,~M_2,~M_3$, let
$$p_{ij}:M_1 \times M_2 \times M_3 \rightarrow M_i\times M_j$$
be the obvious projection maps.
Let $Z_{12}\subseteq M_1 \times M_2 $ and $Z_{23}\subseteq M_2 \times M_3 $ be $G$-stable closed subvarieties.
We denote
 \begin{equation}
   \label{circ}
   Z_{12}\circ Z_{23}= p_{13}(p_{12}^{-1}(Z_{12})\cap p_{23}^{-1}(Z_{23})).
 \end{equation}
If the restriction of $p_{13}$ to $p_{12}^{-1}(Z_{12})\cap p_{23}^{-1}(Z_{23})$ is a proper map,
then we define the convolution product as follows.
\begin{equation*}
\begin{split}
\star: \ \ K^{G}(Z_{12})\otimes K^{G}(Z_{23})&\longrightarrow K^{G}(Z_{12} \circ Z_{23}), \\
   [\mscr{F}_{1}] \otimes [\mscr{F}_{2}] &\mapsto Rp_{{13}{\ast}}(p_{12}^{\ast}[\mscr{F}_{1}] \overset{L}{\otimes} p_{23}^{\ast}[\mscr{F}_{2}]), \ \ \
\end{split}
\end{equation*}
where $\overset{L}{\otimes}$ is the derived tensor product.
We note that if the properness condition doesn't hold, then derived direct image $Rp_{13\ast}$ is not well-defined in general.

In the case that $M_1 =M_2 =M_3 = M$, the convolution product defines an algebra structure on $ K^G(M\times M)$.

\subsection{Flag variety of type C}

Let $V =\mbb C^{2d}$ with a nondegenerate skew-symmetric bilinear form $( , )$, and $G = Sp(V)$.
To simplify notations, we set once for all
$$N= 2n+1.$$
 Let
$$\Lambda_{\mf c}=\{ \mathbf{v}=(v_i) \in \mbb N^{N} \mid   v_i = v_{N+1-i},\quad \textstyle \sum_{i=1}^{N} v_i = 2d \}.$$
For any $W\subseteq V$, let $W^{\perp} = \{x \in V| (x,y)=0, \ \forall y\in W\}$.
For any $\mbf v \in \Lambda_{\mf c}$, we further set
$$\mathcal{F}_{\mathbf{v}}=\{ F=( 0=V_0 \subset V_1 \subset \cdots \subset V_{N}=V)\ \mid \  V_i=V_{N-i}^{\perp} ,\ \text{dim}( V_{i}/V_{i-1})= v_i,\  \forall i \}. $$
The componentwise action of $G$ on $\mcal F_{\mbf v}$ is transitive.
For any $F \in \mathcal{F}_{\mathbf{v}}$, let $P_{\mathbf{v}} $ be the stabilizer of $F$ in $G$.
Therefore, we have $G/P_{\mathbf{v}}\simeq \mcal F_{\mbf v}$.
Let $\mathcal{F} = \sqcup _{\mathbf{v} \in \Lambda_{\mf c}} \mathcal{F}_{\mathbf{v}}$.
Group $G$ naturally acts on $\mcal F$.

\subsection{Steinberg variety and the convolution algebra}
 \label{steinberg variety of type c}

Let $\mcal M = T^*\mathcal F $, the cotangent bundle of $\mathcal F$.
 More precisely, $\mcal M$ can be written into
\begin{equation*}
  \label{tangent}
  \mcal M = \{(F,x) \in \mathcal F \times \mathfrak{sp}_{2d}\mid x(F_i) \subseteq F_{i-1}, ~ \forall i \}.
\end{equation*}
The conjugate action of $G$ on fibers induces a $G$-action on $\mcal M$.
 we define a $\mbb C^* \times G$-action on $\mcal M$ by
$$ (z,g) \cdot  (F,x) = (gF, z^{-2}gxg^{-1}), \quad \forall (z,g) \in \mbb C^* \times G.$$
Let $\mcal N$ be the nilpotent variety of $\mathfrak{sp}_{2d}$.
By the definition of $\mcal M$,
 we have $\mcal M \subseteq \mcal F \times \mcal N$.
 Let $\pi: \mcal M\rightarrow \mcal N, (F,x) \mapsto x$ be the second projection map.
 It is clear that $\pi$ is a proper map.
   Moreover, one can define a $\mathbb{C}^{*} \times G$-action on $\mcal N$ such that $\pi$ is a $\mathbb{C}^{*} \times G$-equivariant map.

    Let $\mcal Z=\mcal M\times_{\mcal N} \mcal M$ be the generalized Steinberg variety of type C,
    which is  the union of conormal bundles of $G$-orbits in $\mathcal{F} \times \mathcal{F}$.
    If we want to emphasis the type,
      we shall use $\mcal Z^{\mf c}$ instead of $\mcal Z$.
 Group $\mathbb{C}^{*} \times G $ acts on $\mcal Z$ diagonally.
 Since $R(\mbb C^*) = \mbb C[q,q^{-1}]$ and $\mcal Z \subseteq \mcal M \times \mcal M$,
  by Section \ref{sec2.1}, $(K^{\mathbb{C}^{*} \times G}(\mcal Z), \star)$ is a $\mathbb{C}[q,q^{-1}]$ algebra with unit.

\subsection{Generators of the convolution algebra}
\label{Generators of type C}
In this section, we shall study generators of the algebra $K^{\mathbb{C}^{*} \times G}(\mcal Z)$.
Let us recall some notations from \cite{BKLW14}.
Let
$$\Xi_d=\{A=(a_{ij}) \in {\rm Mat}_{N\times N}(\mbb N)\ | \ \sum_{i,j}a_{ij}=2d,\ a_{ij}=a_{N+1-i,N+1-j},\ \forall \  i,\  j\}.$$
It has been shown in \cite[Section 6]{BKLW14} that the set $\Xi_d$  parameterizes $G$-orbits of $\mathcal{F} \times \mathcal{F}$.
For each $ A=(a_{ij}) \in \Xi_d$, denote
\begin{equation*}
  \ro(A) = (\sum_j a_{ij})_{i=1, 2,\cdots, N}\ {\rm and}\  \co(A) = (\sum_i a_{ij})_{j=1,2, \cdots, N}.
\end{equation*}
Moreover, for any $\mathbf{v}, \mathbf{w} \in \Lambda_{\mf c}$, the set $\Xi_d(\mathbf{v},\mathbf{w})$ parameterizes $G$-orbits of $\mathcal{F}_{\mathbf{v}}\times \mathcal{F}_{\mathbf{w}}$, where
$$\Xi_d(\mathbf{v},\mathbf{w}) = \{ M \in \Xi_d \mid \ro(M) = \mbf v,\  \co(M) = \mbf w \}.$$

Let $W_{\mf c} = \mathbb{Z}_2^{d}\ltimes S_{d}$ be the Weyl group of type $C_d$, which
has a natural action on set $\{1,2, \cdots,2d\}$.
  For  $\mathbf{v} \in \Lambda_{\mf c}$, we set $\bar{v}_{i}=\sum_{r=1}^{i} v_r $ and $[\mathbf{v}]_i=[1+\bar{v}_{i-1},\bar{v}_{i}]\subseteq \mbb N$.
   Then $[\mathbf{v}] =([\mathbf{v}]_1,[\mathbf{v}]_2,\cdots,[\mathbf{v}]_N)$ forms a partition of the set $\{1,2,\cdots,2d\}$.
    To a matrix $A \in \Xi_d$, we also associate a partition of the set $\{1,2,\cdots,2d\}$ as follows
    \begin{equation}\label{partA}
      [A]=([A]_{11}, \cdots [A]_{1N}, [A]_{21},\cdots, [A]_{NN}),
    \end{equation}
where $[A]_{ij} =[\sum\limits_{(h,k)< (i,j)}a_{hk}+1 , \sum\limits_{(h,k)<(i,j)}a_{hk}+a_{ij}]\subseteq \mbb N$,
and $<$ is the  left lexicographical order, i.e.,
$$(h,k) < (i,j) \Leftrightarrow h<i  ~\text{or}~ (h = i ~\text{and}~  k<j).$$
For any partition $[\mbf v]=([\mathbf{v}]_1,[\mathbf{v}]_2,\cdots,[\mathbf{v}]_N)$ of the set $\{1,2,\cdots,2d\}$,
we define a partition of the set $\{1,2,\cdots,d\}$ as follows,
\begin{equation*}
  [\mbf v]^{\mf c} = ([\mathbf{v}]_1,[\mathbf{v}]_2, \cdots,[\mbf v]_n, [\mathbf{v}]^{\mf c}_{n+1}) \ {\rm and}\ [\mathbf{v}]^{\mf c}_{n+1}= [\bar v_n+1, d].
\end{equation*}
It is clear that $|[\mathbf{v}]^{\mf c}_{n+1}| = \frac{1}{2}|[\mathbf{v}]_{n+1}|$,
where $|-|$ is the cardinality of the set.
Similarly, for any partition $[A]$ of the set $\{1,2,\cdots,2d\}$ in \eqref{partA},
we define a partition, denoted by $[A]^{\mf c}$, of the set $\{1,2,\cdots,d\}$,
\begin{equation*}
  [A]^{\mf c} = ([A]_{11}, \cdots [A]_{1N}, [A]_{21},\cdots, [A]_{n+1,n}, [A]_{n+1,n+1}^{\mf c}),
\end{equation*}
where $[A]^{\mf c}_{n+1,n+1}= [\sum_{(h,k)< (n+1,n+1)}a_{hk}+1, d]$.
To a partition $[\mbf v]$, we associate a subgroup $W_{[\mbf v]^{\mf c}}$ of   $W_{\mf c}$,
$$W_{[\mbf v]^{\mf c}} = S_{[\mbf v]_1}\times S_{[\mbf v]_2} \times \cdots \times S_{[\mbf v]_n} \times (\mathbb{Z}_2^{|[\mbf v]_{n+1}^\mbf c|}\ltimes S_{[\mbf v]_{n+1}^\mbf c}),$$
where  $S_{[\mbf v]_i}$ is the subgroup of $S_d$ consisting of all permutations which preserve $[\mbf v]_i$.

It is well known that there is also a bijection between the double coset $W_{[\mathbf{v}]^{\mf c}}\backslash W_{\mf c}/W_{[\mathbf{w}]^{\mf c}}$ and the $G$-orbits on $ \mathcal{F}_{\mathbf{v}}\times \mathcal{F}_{\mathbf{w}}$.
 The precise correspondence is given by
$$ W_{[\mathbf{v}]}\backslash W_{\mf c}/W_{[\mathbf{w}]} \longrightarrow \Xi_d(\mathbf{v},\mathbf{w}), \quad  W_{[\mathbf{v}]}\sigma W_{[\mathbf{w}]} \mapsto \mathbf{m}^{\sigma}=(m^{\sigma}_{ij}). $$
Here $m^{\sigma}_{ij}=\#\{ a \in [\mathbf{v}]_i \mid  \sigma (a)  \in [\mathbf{w}]_j \}$.

We can define an order $\preceq$ on $\Xi_d$ as follows.
For any $A=(a_{ij}), B=(b_{ij})\in \Xi_d$, $A \preceq B $ if and only if
\begin{align}
\ro(A) = \ro(B),\ \co(A)=\co(B),\ {\rm and}\ \sum_{r\leq i; s\geq j} a_{rs} \leq \sum_{r\leq i; s\geq j} b_{rs}, \  \forall  i<j.
\end{align}
This order is compatible with the Bruhat order on $W_{\mf c}$ via the above bijection.
By Lemma 3.8 in \cite{BKLW14}, we have the following proposition.
\begin{prop}
\label{orbit structure}
{\rm (a)}  For any $A,B \in \Xi_d$, $\mathcal{O}_A \subseteq  \overline{\mathcal{O}}_B$ if $A \preceq B$.

{\rm (b)} For any $A,B \in \Xi_d$,
let $M(A,B)$ be the set consisting of all matrices $C\in \Xi_d$ such that $\mcal O_C \subseteq p_{13}(p_{12}^{-1}\mathcal{O}_A \cap p_{23}^{-1}\mathcal{O}_B)$.
We have
$$p_{13}(p_{12}^{-1}\mathcal{O}_A \cap p_{23}^{-1}\mathcal{O}_A)= \bigcup\limits_{C \in M(A,B)}\mathcal{O}_C.$$
Moreover, there exists a unique matrix, denoted by $A\circ B $, such that for any $C \in M(A,B)$, we have $C \preceq A\circ B$.

{\rm (c)} For any $A,A',B,B' \in \Xi_d$ with $A' \preceq A $, $B' \preceq B$, we have
$$\quad C' \preceq A\circ B,~  \forall ~ C' \in M(A',B'). $$

\end{prop}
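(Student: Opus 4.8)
The plan is to deduce all three parts from the two dictionaries already set up in the excerpt: the bijection between $\Xi_d(\mbf v,\mbf w)$ and the double cosets $W_{[\mbf v]^{\mf c}}\backslash W_{\mf c}/W_{[\mbf w]^{\mf c}}$, and the stated compatibility of the order $\preceq$ with the Bruhat order on $W_{\mf c}$. The geometry then reduces to standard facts about diagonal $G$-orbit closures on products of partial flag varieties. For part (a), I would combine the compatibility of $\preceq$ with the Bruhat order with the classical fact that, under the double-coset parametrization, the closure order on the diagonal $G$-orbits of $\mcal F_{\mbf v}\times\mcal F_{\mbf w}$ is exactly the Bruhat order. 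Transporting along the bijection, $A\preceq B$ becomes $\mcal O_A\subseteq\overline{\mcal O_B}$; moreover this chain of identifications makes the implication an equivalence, a point I will use freely below.

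For part (b), I would first note that each $p_{ij}$ is $G$-equivariant for the diagonal action, so $p_{13}(p_{12}^{-1}\mcal O_A\cap p_{23}^{-1}\mcal O_B)$ is $G$-stable, hence a union of orbits $\mcal O_C$; this yields the asserted decomposition with $M(A,B)$ indexing the participating orbits. The heart of the matter is existence and uniqueness of a maximal $C$, and for this I would prove that $p_{12}^{-1}\mcal O_A\cap p_{23}^{-1}\mcal O_B$ is irreducible. Projecting to the middle factor $\mcal F_{\mbf w}$ (with $\mbf w=\co(A)=\ro(B)$) exhibits it as a bundle whose fiber over $F_2$ is the product of the two sets $\{F_1:(F_1,F_2)\in\mcal O_A\}$ and $\{F_3:(F_2,F_3)\in\mcal O_B\}$; each is a single orbit of the parabolic $P_{\mbf w}=\mathrm{Stab}_G(F_2)$ and is therefore irreducible, since $P_{\mbf w}$ is connected. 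Thus the total space is irreducible, so its $p_{13}$-image is an irreducible finite union of locally closed orbits and hence contains a unique dense one, $\mcal O_{A\circ B}$. By the equivalence in part (a), every other $\mcal O_C$ in the image satisfies $C\preceq A\circ B$, which is the claim.

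For part (c) I would argue by density. From $A'\preceq A$, $B'\preceq B$ and part (a) we have $\mcal O_{A'}\subseteq\overline{\mcal O_A}$ and $\mcal O_{B'}\subseteq\overline{\mcal O_B}$, whence
$$p_{13}(p_{12}^{-1}\mcal O_{A'}\cap p_{23}^{-1}\mcal O_{B'})\subseteq p_{13}(p_{12}^{-1}\overline{\mcal O_A}\cap p_{23}^{-1}\overline{\mcal O_B}).$$
The same fiber-bundle analysis as in (b), now with Schubert-type fibers $\overline{\mcal O_A}\cap(\mcal F_{\mbf v}\times\{F_2\})$, shows the right-hand preimage-intersection is irreducible; since each orbit is open in its closure, it contains $p_{12}^{-1}\mcal O_A\cap p_{23}^{-1}\mcal O_B$ as a dense open subset, and this subset is nonempty because $\co(A)=\ro(B)$. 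Applying $p_{13}$ and taking closures, I get $\overline{p_{13}(p_{12}^{-1}\overline{\mcal O_A}\cap p_{23}^{-1}\overline{\mcal O_B})}=\overline{\mcal O_{A\circ B}}$ by part (b). Therefore every $C'\in M(A',B')$ has $\mcal O_{C'}\subseteq\overline{\mcal O_{A\circ B}}$, i.e.\ $C'\preceq A\circ B$.

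The main obstacle I anticipate is not the geometry, which is fairly routine once the orbit structure is in hand, but the combinatorial input underlying part (a): verifying that the closure order on $G$-orbits agrees with $\preceq$ in type C, where the folding data $[\mbf v]^{\mf c}$, $[A]^{\mf c}$ and the $\mbb Z_2$ factors in $W_{[\mbf v]^{\mf c}}$ complicate the clean type-A comparison. This is precisely the content imported from Lemma 3.8 of \cite{BKLW14}, on which the compatibility of $\preceq$ with the symplectic Bruhat order rests.
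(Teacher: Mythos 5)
Your argument is correct, but it is genuinely different from what the paper does: the paper gives no proof at all for this proposition, simply importing it from Lemma 3.8 of \cite{BKLW14} (where the analogous orbit statements are established in the setting of flag varieties over finite fields and the Schur-algebra multiplication). What you supply instead is a self-contained geometric derivation over $\mbb C$: part (a) is reduced to the standard dictionary between the closure order on diagonal $G$-orbits of $\mcal F_{\mbf v}\times\mcal F_{\mbf w}$ and the Bruhat order on double cosets, and parts (b) and (c) then follow from irreducibility of $p_{12}^{-1}\mcal O_A\cap p_{23}^{-1}\mcal O_B$ (as a homogeneous bundle over the middle flag variety with fiber a product of orbits of the connected parabolic $P_{\mbf w}$) together with a density argument for the closures. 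Your irreducibility and density steps are sound, and your observation that $A'\preceq A$ forces $\co(A')=\co(A)$, $\ro(A')=\ro(A)$ correctly guarantees that $A\circ B$ is defined whenever $M(A',B')\neq\emptyset$. The one point to flag is that you use part (a) as an equivalence ($\mcal O_C\subseteq\overline{\mcal O}_{A\circ B}$ implies $C\preceq A\circ B$), whereas the paper states only one implication and only asserts that $\preceq$ is ``compatible with'' the Bruhat order; the equivalence is indeed what holds (and is what Lemma 3.8 of \cite{BKLW14} encodes), but as you yourself note in your closing paragraph, that combinatorial identification in type C is exactly the content being imported rather than proved. In short: your route trades the paper's citation for an explicit geometric argument whose only external input is the closure-order/Bruhat-order comparison, which makes the logical dependence more transparent at the cost of repeating material available in \cite{BKLW14}.
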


Let $\mathbf{R} = \mathbb{C}[x_1^{\pm 1},x_2^{\pm 1},\cdots,x_d^{\pm 1}]$.
We shall define a natural action of Weyl group $W_{\mf c}$ on $\mbf R$ as follows,
which induces a well-defined action of $W_{[\mbf v]^{\mf c}}$ (resp. $W_{[A]^{\mf c}}$) on $\mbf R$.
 For any permutation $\sigma \in S_{d}$, the action of $\sigma$ on $\mbf R$ is given by
$$\sigma: \mathbf{R} \rightarrow \mathbf{R},~~f(x_1^{\pm 1},x_2^{\pm 1},\cdots,x_d^{\pm 1}) \mapsto  f(x_{\sigma(1)}^{\pm 1},x_{\sigma(2)}^{\pm 1},\cdots,x_{\sigma(d)}^{\pm 1}).$$
 We now define a $\mathbb{Z}_2^d$-action on $\mathbf{R}$.
 We shall denote by $[{\bf 1}]_m = (\delta_{im})_{i = 1,2,\cdots, d}$ the $m$-th generator of $\mbb Z_2^d$, $\forall m=1,\cdots,d$.
 The action of  $[{\bf 1}]_m $ on  $\mathbf{R}$ is defined by
\begin{equation*}
  \begin{split}
[{\bf 1}]_{m}: \mathbf{R} &\rightarrow \mathbf{R},\\
 f(x_1^{\pm 1},\cdots,x_{m-1}^{\pm 1}, x_m^{\pm 1},x_{m+1}^{\pm 1},\cdots x_d^{\pm 1}) &\mapsto  f(x_1^{\pm 1},\cdots,x_{m-1}^{\pm 1}, x_m^{\mp 1},x_{m+1}^{\pm 1},\cdots x_d^{\pm 1}).
  \end{split}
\end{equation*}
For any partition $I = (I_1,I_2, \cdots, I_{2r+1})$ of $2d$,
denote by   $\mathbf{R}^{W_{I^{\mf c}}}$ the subring of $\mathbf{R}$ fixed by $W_{I^{\mf c}}$.
If  $J = (J_1,J_2, \cdots, J_{2r+1})$ is another partition with $W_{J^{\mf c}} \subseteq W_{I^{\mf c}}$.
We define a map
 $$W_{I^{\mf c}}/W_{J^{\mf c}} : \mathbf{R}^{W_{J^{\mf c}}} \rightarrow \mathbf{R}^{W_{I^{\mf c}}}, \quad f \mapsto \sum\limits_{\sigma \in W_{I^{\mf c}}/W_{J^{\mf c}}} \sigma(f).$$

\begin{prop}\label{pushforwad C}
For any $\mathbf{v}, \mathbf{v}_1, \mathbf{v}_2 \in \Lambda_{\mf c}$, $A \in \Xi_d(\mathbf{v}_1,\mathbf{v}_2)$.

{\rm (a)} There exist  $\mathbb{C}$-algebra isomorphisms $K^{G}(\mathcal{F}_{\mathbf{v}}) \cong \mathbf{R}^{W_{[\mathbf{v}]^{\mf c}}} \text{and} \ K^{G}(\mathcal{O}_A) \cong \mathbf{R}^{W_{[A]^{\mf c}}}$.

{\rm (b)} The first projection map $p_{1,A} : \mathcal{O}_A \rightarrow \mathcal{F}_{\mathbf{v}_1}$ is a smooth fibration.
Moreover, if $\mathcal{O}_A $ is closed, then the direct image morphism $R_{p_{1,A} *}$ is given by
 \begin{equation*}
\begin{split}
 &R_{p_{1,A} *}(f) \\
  &= W_{[\mathbf{v}_1]^{\mf c}}/W_{[A]^{\mf c}}(f\prod_{(s,t)}(1-x_s/x_t)^{-1}\prod_{(l,m)}(1-x_l/x_m)^{-1}(1-x_lx_m)^{-1}\prod_{(p,q)}(1-x_px_q)^{-1}),
 \end{split}
 \end{equation*}
where the product runs over the following ranges.
$$(s,t) \in  [A]_{ja} \times [A]_{jb},~ 1 \leq a < b \leq 2n+1,~ j \in [1,n];$$
$$(l,m) \in [A]_{n+1,a} \times ([A]_{n+1,b} \cup  [A]_{n+1,n+1}^{\mf c}),\ a,b\in [1,n] ~\text{and}~ a< b; $$
$$(p,q) \in  [A]_{n+1,a} \times [A]_{n+1,a},~ p \leq q, 1 \leq a \leq n.$$
\end{prop}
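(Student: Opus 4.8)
The plan is to reduce both statements to the standard equivariant K-theory of homogeneous spaces and then to carry out the type C root bookkeeping explicitly. For part (a), I would first record the homogeneous descriptions $\mcal F_{\mbf v}\cong G/P_{\mbf v}$ and $\mcal O_A\cong G/(P_{\mbf v_1}\cap{}^gP_{\mbf v_2})$, where $g\in G=Sp(V)$ is a representative of the relative position encoded by $A$. Fixing a maximal torus $T\subset G$, so that $R(T)=\mbf R$, the basic input is the classical isomorphism $K^G(G/Q)\cong R(Q)\cong R(L_Q)\cong R(T)^{W_{L_Q}}$, valid for any closed subgroup $Q$ with reductive part $L_Q$ (the unipotent radical contributes nothing to $R(Q)$). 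It then remains to identify the Weyl group of the relevant Levi: for $Q=P_{\mbf v}$ the Levi is $\prod_{i=1}^n GL_{v_i}\times Sp_{v_{n+1}}$, whose Weyl group is precisely $W_{[\mbf v]^{\mf c}}$ as defined in the text, and for $Q=P_{\mbf v_1}\cap{}^gP_{\mbf v_2}$ the Levi has Weyl group $W_{[A]^{\mf c}}$. Matching these against the definitions of $W_{[\mbf v]^{\mf c}}$ and $W_{[A]^{\mf c}}$ yields the two $\mbb C$-algebra isomorphisms, the algebra structure being the one transported from $R(T)^W$.

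For the fibration claim in part (b), the argument is formal: $p_{1,A}$ is $G$-equivariant between homogeneous spaces, so under the identifications $\mcal O_A\cong G/(P_{\mbf v_1}\cap{}^gP_{\mbf v_2})$ and $\mcal F_{\mbf v_1}\cong G/P_{\mbf v_1}$ it is the projection induced by the inclusion of stabilizers. Hence it is a locally trivial fibration with typical fibre the partial flag variety $P_{\mbf v_1}/(P_{\mbf v_1}\cap{}^gP_{\mbf v_2})$ of the Levi $L_{\mbf v_1}$, which is smooth; when $\mcal O_A$ is closed this fibre is in addition projective, giving the properness needed for $R_{p_{1,A}*}$ to be defined.

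To obtain the explicit pushforward I would invoke the K-theoretic integration-along-the-fibre formula for a homogeneous fibration $G/Q\to G/P$ (equivalently iterated Demazure operators, or the Weyl character formula / Atiyah--Bott fixed point formula applied on the fibre; localization at $T$-fixed points is also available). Under the identifications of (a) this realizes $R_{p_{1,A}*}$ as the symmetrization operator $W_{[\mbf v_1]^{\mf c}}/W_{[A]^{\mf c}}$ applied to $f$ divided by the K-theoretic Euler class $\prod_\alpha(1-e^{-\alpha})$ of the relative tangent bundle, the product running over the $T$-weights of the fibre tangent space, namely the positive roots of $L_{\mbf v_1}$ that do not lie in the Levi of the stabilizer. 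It then remains to enumerate these roots. Since $L_{\mbf v_1}\cong\prod_{j=1}^n GL_{(\mbf v_1)_j}\times Sp_{(\mbf v_1)_{n+1}}$, the fibre roots split into a type A part coming from each $GL$ block, giving the factors $(1-x_s/x_t)^{-1}$ over $(s,t)\in[A]_{ja}\times[A]_{jb}$, and a type C part coming from the symplectic block. In type C the positive roots $e_l-e_m$, $e_l+e_m$ and $2e_p$ produce the factors $(1-x_l/x_m)^{-1}$, $(1-x_lx_m)^{-1}$ and $(1-x_p^2)^{-1}$; sorting them according to whether the indices lie in distinct blocks $[A]_{n+1,a},[A]_{n+1,b}$ or in a single block (where the diagonal $p=q$ supplies the long root $2e_p$) produces exactly the stated $(l,m)$- and $(p,q)$-ranges.

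I expect the decisive difficulty to be this last step, the root enumeration. One must pin down precisely which positive roots of $L_{\mbf v_1}$ survive in the fibre, keep careful track of the long/short root dichotomy special to type C, and handle the distinguished $(n+1)$-block, where the symplectic factor sits and where the folding $[\mbf v]\mapsto[\mbf v]^{\mf c}$ halves the relevant index set. Getting the three index ranges $(s,t)$, $(l,m)$, $(p,q)$ exactly right — in particular the convention $p\le q$ that absorbs the long roots $2e_p$ into the $(1-x_px_q)^{-1}$ family — is the combinatorial heart of the proof, whereas the underlying K-theoretic machinery is standard.
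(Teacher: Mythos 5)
Your proposal follows essentially the same route as the paper: part (a) via $K^G(G/Q)\cong R(Q)\cong R(T)^{W_{L_Q}}$ applied to $P_{\mbf v}$ and to $P_F\cap P_{F'}$ with the Levi identified as $\prod_j GL_{v_j}\times Sp_{v_{n+1}}$ (resp. $\prod_{(i,j)<(n+1,n+1)}GL(Z_{ij})\times Sp(Z_{n+1,n+1})$), and part (b) by identifying the fibre of $p_{1,A}$ with $P_F/(P_F\cap P_{F'})$, computing the weights of the relative cotangent bundle, and applying the Weyl character/Lefschetz-type pushforward formula as a symmetrization divided by the K-theoretic Euler class. The root bookkeeping you describe — type A factors from the $GL$ blocks, and the type C roots $e_l-e_m$, $e_l+e_m$, $2e_p$ sorted by block with the long roots absorbed into the $p\le q$ range — is exactly the content of the paper's formula for $T_A^*$.
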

\begin{proof}

Fix $A \in \Xi_d(\mathbf{v}, \mbf w)$, $(F,F') \in \mathcal{O}_A$. There exists a decomposition  $V = \oplus_{1 \leq i,j \leq N}Z_{ij}$ such that
$$F_a = \oplus_{i \leq a}Z_{ij}~\text{and}~ F_b' = \oplus_{j \leq b}Z_{ij}, ~\forall \ a,b \in [1,N].$$
 Let $P_F$ and $P_{F'}$ be the stabilizer of $F$ and $F'$ in G, respectively.
Since $\mathcal{O}_A = \{g\cdot(F,F')\ |\ g\in G\}$,
 the induction property \cite[Section 5.2.16]{CG} implies that
 $$K^G(\mathcal{O}_A) \cong K^G(G/(P_F \cap P_{F'}))\cong K^{P_F \cap P_{F'}}(pt) \cong R(P_F \cap P_{F'}),$$
 where $R(P_F \cap P_{F'})$ is the representation ring of $P_F \cap P_{F'}$.
 Moreover, $Sp(Z_{n+1,n+1})\times \prod_{(i,j) < (n+1,n+1)}GL(Z_{ij})$ is isomorphic to the Levi subgroup of $P_F \cap P_{F'}$.
 By  \cite[Theorem 6.1.4]{CG}  again, we have
$$ K^{G}(\mathcal{O}_A) \cong \mathbf{R}^{W_{[A]^{\mf c}}}.$$
In the special case that $A =\text{diag}( \mathbf v)$, the above result implies that
$$K^{G}(\mathcal{F}_{\mathbf{v}}) \cong \mathbf{R}^{W_{[\mathbf{v}]^{\mf c}}}.$$
Statement (a) follows.

The first part of (b) is clear. We now prove the second part of (b).
Suppose that $\mathcal{O}_A$ is closed.
The fiber of the projection $p_{1,A}$ is isomorphic to $P_{F}/(P_F \cap P_{F'})$.
Thus  the class $T_A^* \in \mathbf{R}$ of  the relative cotangent bundle $T_{p_{1,A}}^*$ at the origin is
\begin{equation}\label{cotangent formula}
T_A^* = \sum_{(s,t)}x_s/x_t\sum_{(l,m)}(x_l/x_m+ x_lx_m)\sum_{(p,q)}x_px_q,
\end{equation}
where the sums run over the following ranges,
 $$(s,t) \in  [A]_{ja} \times [A]_{jb},~ 1 \leq a < b \leq N,~ j \in [1,n];$$
$$(l,m) \in [A]_{n+1,a} \times ([A]_{n+1,b} \cup [A]_{n+1,n+1}^{\mf c}) , a,b\in [1,n] ~\text{and}~ a<b; $$
$$(p,q) \in  [A]_{n+1,a} \times [A]_{n+1,a},~ p \leq q, 1 \leq a \leq n.$$
By Lefschetz formula, for any sheaf $\mathscr{F} \in K^G(\mathcal{O}_A)$, we have
$$R_{p_{1,A} *}[\mathscr{F}] =  W_{[\mathbf{v}_1]^{\mf c}}/W_{[A]^{\mf c}}([\mathscr{F}] \otimes \bigwedge(T_A^*)^{-1}),$$
where $\bigwedge(T_A^*) = \sum_i(-1)^i\bigwedge^iT_A^*$. Statement (b) follows.
\end{proof}

Let $\mathbb{A} = \mathbb{C}[q,q^{-1}]$ and  $\mathbb{K} = \mathbb{C}(q)$.
For any vector space $V$ over $\mathbb{C}$, we set $V_{\mathbb{A}} = V \otimes \mathbb{A}$ and $V_{\mathbb{K}} = V \otimes \mathbb{K}$.
For any $A \in \Xi_d$,  let $ \mcal{Z}_{A}$ be the conormal bundle of $\mathcal{O}_A$,
and $\mcal{Z}_{\preceq A} = \cup_{B \preceq A} \mcal{Z}_{B}$.
Proposition \ref{orbit structure} implies that $K^{\mathbb{C}^{*}\times G}(\mcal{Z})$ has a filtration structure indexed by $\Xi_d$ as follows.
$$K^{\mathbb{C}^{*}\times G}(\mcal{Z}_{\preceq A}) \star K^{\mathbb{C}^{*}\times G}(\mcal{Z}_{\preceq B}) \subseteq K^{\mathbb{C}^{*}\times G} (\mcal{Z}_{\preceq A\circ B}).$$
On the other hand,  by the cellular fibration lemma \cite[Lemma 5.5.1]{CG}, there is a canonical short exact sequence
$$0\rightarrow K^{\mathbb{C}^{*}\times G}(\mcal{Z}_{\prec A})\rightarrow K^{\mathbb{C}^{*}\times G}(\mcal{Z}_{\preceq A}) \rightarrow K^{\mathbb{C}^{*}\times G}(\mcal{Z}_{ A})\rightarrow 0,$$
where $\mcal{Z}_{\prec A} = \bigcup _{B \preceq A, B\neq A} \mcal Z_{\preceq B}$.
By Proposition \ref{pushforwad C}(a) and Thom isomorphism \cite[Theorem 5.4.17]{CG}, we have the following $\mathbb{A}$-module isomorphism
\begin{equation}
\label{eq2}
\oplus_{A\in \Xi_d} K^{\mathbb{C}^{*}\times G}(\mcal{Z}_{\preceq A})/K^{\mathbb{C}^{*}\times G}(\mcal{Z}_{\prec A}) \cong \oplus_{A\in \Xi_d}K^{\mathbb{C}^{*}\times G}(\mcal{Z}_{ A}) \cong  \oplus_{A\in \Xi_d} \mathbf{R}^{W_{[A]^{\mbf c}}}_{\mathbb{A}}.
\end{equation}
By Proposition \ref{orbit structure}, for any $A' \preceq A, B' \preceq B$ and $(A',B') \neq (A,B)$, we have
$$\mcal{Z}_{\preceq A'} \circ \mcal{Z}_{\preceq B'} \subseteq \mcal{Z}_{ \prec A\circ B}.$$
By \eqref{eq2},
 the convolution product on $ K^{\mathbb{C}^{*}\times G}(\mcal{Z})$ induces a well-defined algebra structure on $\oplus_{A\in \Xi_d} \mathbf{R}^{W_{[A]^{\mbf c}}}_{\mathbb{A}}$,
  denoted by $\overline{\star}$.

\begin{example}\label{fgenerator}

In the case that $N = 3$, let
$$A_1=\left(
  \begin{array}{ccc}
     v_1 & 0 &  0  \\
    a & v_2+2b & a  \\\
 0 &  0 & v_1 \\
  \end{array}
\right), \quad A_2=\left(
  \begin{array}{ccc}
     v_1+a & 0 & 0  \\
    b & v_2 & b   \\
  0 &  0 & v_1+a \\
  \end{array}
\right),
$$
$$ A_3=\left(
  \begin{array}{ccc}
     v_1 & 0 & 0  \\
    b+a & v_2 & b+a   \\
  0 &  0 & v_1 \\
  \end{array}
\right).$$
It is obvious that $A_1\circ A_2 = A_3.$
Set
\begin{equation*}
  \begin{split}
I_1 &= [1,v_1],\\
I_2 &= [v_1+1,v_1+a],\\
I_3 &= [v_1+a+1,v_1+a+b],\\
I_4 &=[v_1+a+b+1,v_1+a+b+v_2],\\
I_5 &=[v_1+a+b+v_2+1,v_1+v_2+a + 2b],\\
I_6 &=[v_1+v_2+a + 2b+1,v_1+v_2+2a + 2b],\\
I_7 &=[v_1+v_2+2a + 2b+1, 2d].
  \end{split}
\end{equation*}
Thus
\begin{equation*}
  \begin{split}
W_{[A_1]^{\mf c}} &= W_{(I_1,I_2,I_3\cup I_4 \cup I_5, I_6,I_7)^{\mf c}}= S_{I_1}\times S_{I_2} \times  (\mathbb{Z}_2^{|(I_3\cup I_4 \cup I_5)^{\mbf c}|}\ltimes S_{(I_3\cup I_4 \cup I_5)^{\mbf c}}),\\
W_{[A_2]^{\mf c}} &= W_{(I_1\cup I_2,I_3,I_4 , I_5,I_6\cup I_7)^{\mf c}}= S_{I_1 \cup I_2}\times S_{I_3} \times  (\mathbb{Z}_2^{| I_4^{\mbf c} |}\ltimes S_{I_4^{\mbf c}} ),\\
W_{[A_3]^{\mf c}} &= W_{(I_1,I_2\cup I_3,I_4 , I_5\cup I_6, I_7)^{\mf c}}= S_{I_1 }\times S_{I_2\cup I_3} \times (\mathbb{Z}_2^{| I_4^{\mbf c} |}\ltimes S_{I_4^{\mbf c}} ).
 \end{split}
\end{equation*}
By Proposition \ref{pushforwad C}(b), we have
 \begin{equation*}
   \begin{split}
\bar \star:  \mathbf{R}^{W_{[A_1]^{\mf c}}} \otimes \mathbf{R}^{W_{[A_2]^{\mf c}}} &\longrightarrow \mathbf{R}^{W_{[A_1 \circ A_2]^{\mf c}}},\\
 f\otimes g &\mapsto W_{[A_3]^{\mf c}}/(W_{[A_1]^{\mf c}} \cap W_{[A_2]^{\mf c}})(fg \prod\limits_{i \in I_2}\prod\limits_{j \in I_3} \frac{1 - q^2x_j/x_i}{1 - x_i/x_j}),
   \end{split}
 \end{equation*}
where $W_{[A_3]^{\mf c}}/(W_{[A_1]^{\mf c}} \cap W_{[A_2]^{\mf c}}) = S_{I_2\cup I_3}/(S_{I_2}\times S_{I_3})$.
For any $f \in \mathbf{R}^{S_{I_2 \cup I_3}}$, we have
$$S_{I_2\cup I_3}/(S_{I_2}\times S_{I_3})(f\prod\limits_{i \in I_2}\prod\limits_{j \in I_3} \frac{1 - q^2x_j/x_i}{1 - x_i/x_j})=P(q)f,$$
where $P(q)$ is a polynomial in $q$ which can be computed explicitly as follows.
$$P(q) = q^{ab}\frac{[a+b]!}{[a]![b]!}, \quad [k]! = \prod_{i=1}^k[i],\ {\rm and}\ \  [k] = \frac{q^k - q^{-k}}{q-q^{-1}}.$$
 Since $P(q)$ is invertible in $\mbb K$, the map
  $\bar \star:  \mathbf{R}^{W_{[A_1]^{\mf c}}}_{\mathbb{K}} \otimes \mathbf{R}^{W_{[A_2]^{\mf c}}}_{\mathbb{K}} \longrightarrow \mathbf{R}^{W_{[A_1 \circ A_2]^{\mf c}}}_{\mathbb{K}}$
  is surjective.
\end{example}

\begin{prop} \label{diagonal action}
 Let $A$ be a diagonal matrix.
 For any $B \in \Xi_d$ with $\co(A) =\ro(B)$,
 we have
$$f \bar \star g= fg, \quad \forall f\in \mathbf{R}^{W_{[A]^{\mf c}}},\   g\in \mathbf{R}^{W_{[B]^{\mf c}}}.$$
\end{prop}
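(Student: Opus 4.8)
The plan is to evaluate the convolution product directly from the geometry, using that a diagonal matrix corresponds to the diagonal subvariety of $\mathcal{M}$, which behaves as a unit-type element. First I would unwind the geometry attached to a diagonal $A$: setting $\mathbf{v} = \ro(A) = \co(A)$, the orbit $\mathcal{O}_A$ is the diagonal $\{(F,F)\mid F \in \mathcal{F}_{\mathbf{v}}\}$, so the component $\mathcal{Z}_A$ of the Steinberg variety is the diagonal of $\mathcal{M}_{\mathbf{v}} = T^{*}\mathcal{F}_{\mathbf{v}}$, namely the locus $\{((F,x),(F,x))\}$. At the level of set partitions one has $[A]^{\mf c} = [\mathbf{v}]^{\mf c}$, so $\mathbf{R}^{W_{[A]^{\mf c}}} = \mathbf{R}^{W_{[\mathbf{v}]^{\mf c}}}$ and, by Proposition \ref{pushforwad C}(a), $f$ is genuinely a class in $K^{G}(\mathcal{F}_{\mathbf{v}})$.

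Second I would record the combinatorics that makes the identity well-posed and that trivializes the symmetrization in the convolution formula. Since $\ro(B) = \co(A) = \mathbf{v}$, the row partition $[B]$ refines $[\mathbf{v}]$, so $W_{[B]^{\mf c}} \subseteq W_{[\mathbf{v}]^{\mf c}} = W_{[A]^{\mf c}}$; hence $\mathbf{R}^{W_{[A]^{\mf c}}} \subseteq \mathbf{R}^{W_{[B]^{\mf c}}}$ and the product $fg$ lands in the target $\mathbf{R}^{W_{[B]^{\mf c}}}$. Next I would verify $A \circ B = B$: as $\mathcal{O}_A$ is the diagonal, the intersection $p_{12}^{-1}\mathcal{O}_A \cap p_{23}^{-1}\mathcal{O}_B$ forces $F_1 = F_2$ and is carried isomorphically onto $\mathcal{O}_B$ by $p_{13}$, so Proposition \ref{orbit structure}(b) identifies the unique maximal orbit with $B$. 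Consequently $W_{[A]^{\mf c}} \cap W_{[B]^{\mf c}} = W_{[B]^{\mf c}}$, and the operator $W_{[A \circ B]^{\mf c}}/(W_{[A]^{\mf c}} \cap W_{[B]^{\mf c}})$ occurring in the convolution is the identity.

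Third I would compute the product. Lifting the orbit computation to conormal bundles, the intersection $p_{12}^{-1}(\mathcal{Z}_A) \cap p_{23}^{-1}(\mathcal{Z}_B)$ is cut out by the diagonal condition $(F_1,x_1)=(F_2,x_2)$; this identifies it with $\mathcal{Z}_B$, meets $p_{23}^{-1}(\mathcal{Z}_B)$ transversally, and is mapped isomorphically onto $\mathcal{Z}_B$ by $p_{13}$, whose relative cotangent bundle is therefore zero. Feeding this into the pushforward recipe of Proposition \ref{pushforwad C}(b)---equivalently, the convolution template of Example \ref{fgenerator}---the factor $\bigwedge(T^{*})^{-1}$ built from \eqref{cotangent formula} equals $1$ and the symmetrization is trivial, leaving $f \bar\star g = p_{13 *}(p_{12}^{*} f \overset{L}{\otimes} p_{23}^{*} g) = fg$, where $p_{12}^{*} f$ is just $f$ viewed through the refinement inclusion $\mathbf{R}^{W_{[\mathbf{v}]^{\mf c}}} \hookrightarrow \mathbf{R}^{W_{[B]^{\mf c}}}$.

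The main obstacle is the transversality assertion in the third step: one must check that $p_{12}^{*} f \overset{L}{\otimes} p_{23}^{*} g$ carries no higher $\mathrm{Tor}$ contributions and that $p_{13 *}$ collapses to pushforward along an isomorphism, i.e.\ that convolving with the diagonal produces no excess-intersection correction. This is precisely the statement that the diagonal component acts as a unit-type element; verifying it reduces to confirming that the intermediate factor meets the diagonal cleanly inside the triple product, so that the relative cotangent class of \eqref{cotangent formula} is empty and $\bigwedge(T^{*})^{-1} = 1$.
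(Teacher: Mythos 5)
Your argument is correct and is essentially the paper's proof: the key point in both is that, because $\mathcal{O}_A$ is the diagonal, $p_{13}$ restricts to an isomorphism from $p_{12}^{-1}\mathcal{O}_A \cap p_{23}^{-1}\mathcal{O}_B$ onto $\mathcal{O}_B$, so the convolution degenerates to multiplication. The paper simply delegates the remaining verification (trivial relative cotangent class, no excess intersection, trivial symmetrization) to Corollary 3 of Vasserot's paper \cite{V98}, which is exactly the bookkeeping you carry out by hand.
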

\begin{proof}
Since $A$ is a diagonal matrix, the projection map $p_{13}: p_{12}^{-1}\mathcal{O}_A \cap p_{23}^{-1}\mathcal{O}_B \longrightarrow \mathcal{O}_B$ is an isomorphism.
 By Corollary 3 in \cite{V98}, the proposition follows.
\end{proof}

Let $E_{ij} = (a_{kl})_{N\times N}$ such that $a_{kl} = 1$ if $ (k,l) = (i,j)$ and $ 0,$ otherwise.
Define $E_{ij}^{\theta} \triangleq  E_{ij} + E_{N+1-i,N+1-j}$ and $E_{ij}^{\theta}(\mathbf{v},a)\triangleq \diag(\mathbf{v}) + a E_{ij}^{\theta}$, where  $\mathbf{v} = (v_1, \cdots, v_N)$ such that $v_i = v_{N+1 -i}$ and $\sum\limits_{i=1}^N v_i= 2d-2a$.

\begin{prop}\label{inducpropo}
Fix $\mathbf{v} \in \Lambda_{\mf c}, A,B \in \Xi_d$  with $\co(A) = \ro(B)$.
\begin{itemize}
\item[(a)]Let $A=E_{h,h+1}^{\theta}(\mathbf{v},a)$, and $l=\max\{i\mid b_{h+1,i}\neq 0\}$.
If $b_{h+1,l} \geq a$,
then
$$A\circ B=B+a(E_{hl}^\theta-E_{h+1,l}^{\theta}).$$
Moreover, for any $f \in \mathbf{R}^{W_{[A]^{\mf c}}}_{\mathbb{A}}$ and $g \in \mathbf{R}^{W_{[B]^{\mf c}}}_{\mathbb{A}}$,  we have
$$f\bar \star g=fg \in \mathbf{R}^{W_{[A\circ B]^{\mf c}}}_{\mathbb{A}}.$$

\item[(b)] Let $A=E_{h,h-1}^{\theta}(\mathbf{v},a)$, and $l=\min\{i\mid b_{h-1,i}\neq 0\}$.
If $b_{h-1,l} \geq a$, then
$$A\circ B=B+a(E_{hl}^\theta-E_{h-1,l}^{\theta}),$$
Moreover, for any $f \in \mathbf{R}^{W_{[A]^{\mf c}}}_{\mathbb{A}}$ and $g \in \mathbf{R}^{W_{[B]^{\mf c}}}_{\mathbb{A}}$,   we have
$$f\bar \star g=fg \in \mathbf{R}^{W_{[A\circ B]^{\mf c}}}_{\mathbb{A}}.$$
\end{itemize}
\end{prop}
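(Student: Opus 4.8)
The plan is to reduce, in both parts, the convolution $f\bar\star g$ to ordinary multiplication by proving that the relevant restriction of $p_{13}$ is an isomorphism, after which Corollary 3 of \cite{V98}---invoked exactly as in Proposition \ref{diagonal action}---finishes the argument. The transpose $A\mapsto{}^{t}A$ on $\Xi_d$ swaps the two flags in each orbit, interchanges super- and sub-diagonal generators, and turns $\max$ into $\min$, so it carries the hypotheses and conclusion of (a) to those of (b); I will therefore prove (a) and obtain (b) by this symmetry.

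First I would settle the identity $A\circ B=C$ with $C:=B+a(E^\theta_{hl}-E^\theta_{h+1,l})$. The matrix $C$ is $\theta$-symmetric because each $E^\theta_{ij}$ is, its entries are nonnegative precisely because the hypothesis $b_{h+1,l}\ge a$ gives $c_{h+1,l}=b_{h+1,l}-a\ge 0$, and the relation $\ro(B)=\co(A)$ yields $\ro(C)=\ro(A)$ and $\co(C)=\co(B)$, the two constraints forced on $A\circ B$. To identify $C$ with the distinguished maximum of Proposition \ref{orbit structure}(b) I would check $\preceq$-maximality directly: the order compares the corner sums $\sum_{r\le i,\,s\ge j}$, and transporting $a$ units from row $h+1$ up into row $h$ in the rightmost occupied column $l=\max\{i:b_{h+1,i}\neq0\}$ raises every such corner sum as much as the constraints permit, so no element of $M(A,B)$ can exceed $C$.

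The heart of the proof, and the step I expect to be the only real obstacle, is to show that
\[
p_{13}\colon p_{12}^{-1}\mathcal{O}_A\cap p_{23}^{-1}\mathcal{O}_B\longrightarrow\mathcal{O}_{A\circ B}
\]
is an isomorphism, i.e.\ that each $(F,F'')\in\mathcal{O}_{A\circ B}$ admits a unique intermediate $F'$. Since $A$ agrees with $\diag(\mathbf v)$ away from the block $(h,h+1)$ and its mirror $(N{-}h,N{-}h{+}1)$, the incidence $(F,F')\in\mathcal{O}_A$ forces $F'_i=F_i$ for $i\neq h,N{-}h$ and makes $F'_h$ a corank-$a$ subspace of $F_h$ containing $F_{h-1}$---a priori a whole Grassmannian of choices. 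Choosing a decomposition $V=\bigoplus Z_{ij}$ adapted to $(F,F'')$ as in the proof of Proposition \ref{pushforwad C}, the second incidence $(F',F'')\in\mathcal{O}_B$ demands that the dropped $a$ dimensions be exactly those of $F_h$ lying in the column-$l$ constituents of $F''$; the crux is that the maximality of $l$ forces this space to have dimension exactly $a$ and to meet $F_h$ transversally, so that $F'_h$ is uniquely pinned down. The mirror modification $F'_{N-h}$ is then determined by the symplectic constraint $F'_{N-h}=(F'_h)^{\perp}$, which is the only point at which the form intervenes and which keeps the two mirrored choices mutually consistent; hence the fiber is a single reduced point.

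With $p_{13}$ an isomorphism, $Rp_{13*}$ is an equivalence and Corollary 3 of \cite{V98} gives $f\bar\star g=p_{12}^{*}f\cdot p_{23}^{*}g=fg$. The membership $fg\in\mathbf{R}^{W_{[A\circ B]^{\mf c}}}_{\mathbb{A}}$ is then automatic: over the fiber product---now isomorphic to $\mathcal{O}_{A\circ B}$---the stabilizer is $P_F\cap P_{F'}\cap P_{F''}$, whose Levi Weyl group is $W_{[A]^{\mf c}}\cap W_{[B]^{\mf c}}=W_{[A\circ B]^{\mf c}}$, and $fg$ is manifestly invariant under this group. In short, the combinatorics of the first step and the K-theoretic bookkeeping of the last are routine, and the entire weight of the proposition rests on securing the transversality that makes the intermediate flag unique.
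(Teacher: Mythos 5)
Your overall strategy --- identify $A\circ B$, show that the fiber of $p_{13}$ over a point of $\mathcal{O}_{A\circ B}$ is a single reduced point, then invoke Corollary 3 of \cite{V98} --- is the same as the paper's. The gap is in the step you yourself flag as the crux: the uniqueness of the intermediate flag $F'$ does \emph{not} follow from the maximality of $l$, and in fact it fails under the stated hypotheses. The condition $l=\max\{i\mid b_{h+1,i}\neq 0\}$ constrains row $h+1$ of $B$; it guarantees that $C=B+a(E_{hl}^\theta-E_{h+1,l}^{\theta})$ has nonnegative entries and is the $\preceq$-maximum of $M(A,B)$, but it says nothing about row $h$, which is where the obstruction to uniqueness lives. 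Carrying out the linear algebra in a decomposition $V=\oplus Z_{ij}$ adapted to $(F,F'')$, the admissible $F'_h$ are exactly the codimension-$a$ subspaces $P$ of $Q:=F_h/(F_{h-1}+F_h\cap F''_{l-1})$ with $P+Q_l=Q$, where $Q_l$ is the image of $F_h\cap F''_l$ and $\dim Q=\sum_{j\ge l}b_{hj}+a$; this is an open subset of a Grassmannian and is a point only when $b_{hj}=0$ for all $j\ge l$. The paper's own Example \ref{fgenerator} is a direct counterexample to the bare statement: there $A_1=E^{\theta}_{2,1}(\mathbf v',a)$ and $B=A_2$ satisfy the hypotheses of part (b) with $l=1$ and $b_{h-1,l}=v_1+a\ge a$, yet $f\bar\star g$ acquires the nontrivial factor $P(q)=q^{ab}\frac{[a+b]!}{[a]![b]!}$ rather than being equal to $fg$.

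Note that the paper's proof silently inserts exactly the missing hypothesis (``if $b_{hk}=0$ for all $k\geq l$'') before appealing to the leading-coefficient-one statement of \cite[Proposition 3.3]{BKLW14}, and in the only place the proposition is used (Theorem \ref{generator C}) the matrices $A,B$ are produced by a minimality condition that forces this extra vanishing. So the statement as given needs an additional hypothesis, and your argument cannot be repaired without adding it: the transversality you assert is simply not implied by the maximality of $l$. A secondary inaccuracy in the same spirit: your closing claim that $W_{[A]^{\mf c}}\cap W_{[B]^{\mf c}}=W_{[A\circ B]^{\mf c}}$ also fails in Example \ref{fgenerator} (the left side is contained in $S_{I_2}\times S_{I_3}$ while the right side contains $S_{I_2\cup I_3}$), and holds only under the same extra vanishing that makes the fiber a point.
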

\begin{proof}
We show (a) first. By Bruhat order or  \cite[Lemma 3.9]{BKLW14}, the first part of (a) follows. We now show the second part of (a). Let
$$\mathcal{T} = \{(F,F',F'') \in p_{12}^{-1}(\mathcal{O}_{A})\cap p_{23}^{-1}(\mathcal{O}_{ B}) \mid (F,F'') \in \mathcal{O}_{A\circ B}\}.$$
Consider the projection map $p_{13}: \mathcal{T} \rightarrow \mathcal{O}_{A\circ B}, (F, F', F'') \mapsto (F,F'')$.
    If $b_{hk}=0, \forall\  k \geq l$, the fact that the coefficient of leading term is 1 in  \cite[Proposition 3.3]{BKLW14} implies that $p_{13}$ is an isomorphism.
 The second part of (a) follows from Corollary 3 in \cite{V}.
 Statement (b) can be proved similarly.
\end{proof}

Let $p_1: \mathcal{O}_{E_{i,i+ 1}^{\theta}(\mathbf{v},1)} \rightarrow \mcal{F}_{\mathbf{v} + \mathbf{e}_i  + \mathbf{e}_{N + 1-i} }$
(resp. $q_1: \mathcal{O}_{E_{i+1,i }^{\theta}(\mathbf{v},1)} \rightarrow \mcal{F}_{\mathbf{v} + \mathbf{e}_{i+1}  + \mathbf{e}_{N-i} }$) be the first projection.
 Let $T^{*}p_1$ (resp. $T^{*}q_1$) be the relative cotangent sheaf along the fibers of $p_1$ (resp. $q_1$ )
 and $\mathrm{Det}(T^{*}p_1)$ (resp. $\mathrm{Det}(T^{*}q_1)$) the determinant bundle on  $\mathcal{O}_{E_{i,i+ 1}^{\theta}(\mathbf{v},1)}$ (resp. $ \mathcal{O}_{E_{i+1,i }^{\theta}(\mathbf{v},1)}$).
Let $\pi_1: \mcal{Z}_{E_{i,i+1}^{\theta}(\mathbf{v},1)} \rightarrow \mathcal{O}_{E_{i,i+ 1}^{\theta}(\mathbf{v},1)} $
(resp. $\rho_1:  \mcal{Z}_{E_{i+1,i}^{\theta}(\mathbf{v},1)} \rightarrow  \mathcal{O}_{E_{i+1,i }^{\theta}(\mathbf{v},1)}$) be the first projection.
We define
\begin{equation*}
  \begin{split}
\mathscr{E}_{i,\mathbf{v}} =  \pi_1^{*}(\mathrm{Det}(T^{*}p_1)),\quad
\mathscr{F}_{i,\mathbf{v}} =  \rho_1^{*}(\mathrm{Det}(T^{*}q_1)).
  \end{split}
\end{equation*}
  For $i = 1,2,\ldots, n+1$,   let $\mathscr{H}_{i,\mathbf{v}}$ be the sheaf supported on the $\mcal{Z}_{\text{diag}(\mathbf{v})}$, which corresponds  to $q^{v_i}$ via the isomorphism  $K^{\mathbb{C}^{*}\times G }(\mcal{Z}_{\text{diag}(\mathbf{v})})\cong \mathbf{R}_{\mathbb{A}}^{W_{\mathbf{v}^{\mf c}}}$.
  Set
  \begin{equation*}
    \begin{split}
\mathscr{E}_{i}= \sum\limits_{\mathbf{v}\in \Lambda_{\mf c}} (-q)^{-v_i}\mathscr{E}_{i,\mathbf{v}}, &\quad  \mathscr{F}_{i}= \sum\limits_{\mathbf{v}\in \Lambda_{\mf c} } (-q)^{-v_{i+1}-\delta_{n,i}}\mathscr{F}_{i,\mathbf{v}},\\
 \mathscr{H}_i = &\sum\limits_{\mathbf{v} \in \Lambda_{\mf c}} \mathscr{H}_{i,\mathbf{v}}.
    \end{split}
  \end{equation*}

\begin{theorem}\label{generator C}
The convolution algebra $(K^{\mathbb{C}^{*} \times G}(\mcal Z)_{\mathbb{K}},\star)$ is generated by $\mathscr{E}_i$,\ $\mathscr{F}_i$ and the sheaves supported on the orbits indexed by a diagonal matrix in $\Xi_d$.
\end{theorem}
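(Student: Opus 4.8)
The plan is to run an induction along the partial order $\preceq$ on $\Xi_d$, using the filtration of $K^{\mathbb{C}^{*}\times G}(\mcal Z)_{\mathbb K}$ by the subspaces $K^{\mathbb{C}^{*}\times G}(\mcal Z_{\preceq A})_{\mathbb K}$. Write $\mathcal B$ for the $\mathbb K$-subalgebra generated by the $\mathscr E_i$, the $\mathscr F_i$ and all sheaves supported on diagonal orbits (i.e. the full Cartan $\bigoplus_{\mathbf v}\mathbf R^{W_{[\mathbf v]^{\mf c}}}$). By Proposition \ref{orbit structure} this filtration is multiplicative, $\mcal Z_{\preceq A}\circ\mcal Z_{\preceq B}\subseteq \mcal Z_{\preceq A\circ B}$, and by \eqref{eq2} its associated graded is identified, as an $\mathbb A$-module, with $\bigoplus_{A\in\Xi_d}\mathbf R^{W_{[A]^{\mf c}}}_{\mathbb A}$, the product being the $\bar\star$ of Example \ref{fgenerator}. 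It therefore suffices to show that, modulo $K^{\mathbb{C}^{*}\times G}(\mcal Z_{\prec A})_{\mathbb K}$, the image of $\mathcal B$ fills the whole graded piece $\mathbf R^{W_{[A]^{\mf c}}}_{\mathbb K}$ for every $A$. Since the filtration is exhausting and, for fixed $\ro$ and $\co$, indexed by the finite poset $\Xi_d(\mathbf v_1,\mathbf v_2)$, a standard filtered-to-graded induction then yields $\mathcal B=K^{\mathbb{C}^{*}\times G}(\mcal Z)_{\mathbb K}$.

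First I would manufacture the divided powers. The computation in Example \ref{fgenerator} shows that the $a$-fold convolution $\mathscr E_i^{\star a}$ agrees, modulo strictly lower terms, with the class supported on $\mathcal O_{E_{i,i+1}^{\theta}(\mathbf v,a)}$ up to the scalar $P(q)=q^{ab}[a+b]!/([a]![b]!)$; as $P(q)$ is a unit in $\mathbb K$, the divided-power classes on $\mathcal O_{E_{i,i+1}^{\theta}(\mathbf v,a)}$ and, symmetrically, on $\mathcal O_{E_{i+1,i}^{\theta}(\mathbf v,a)}$, lie in $\mathcal B$.

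Next I would produce, for each $A$, the leading symbol. The engine is Proposition \ref{inducpropo}: realizing $A$ as an iterated $\circ$-product of a diagonal matrix with a chain of elementary matrices $E_{h,h\pm1}^{\theta}(\cdot,a)$ arranged so that the inequality $b_{h\pm1,l}\geq a$ holds at every stage, each convolution is \emph{exact}, namely $f\bar\star g=fg$ with unit leading coefficient and no lower-order loss. Iterating over the chain places an invertible element of $\mathbf R^{W_{[A]^{\mf c}}}$ (an explicit monomial times the identity) into $\mathcal B$ modulo $\mcal Z_{\prec A}$. The existence of such a chain, together with the fact that all intermediate matrices stay $\preceq A$, is the Bruhat-order bookkeeping of \cite[Lemma 3.9]{BKLW14}, which I would quote.

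The hard part is to upgrade this single symbol to all of $\mathbf R^{W_{[A]^{\mf c}}}_{\mathbb K}$. Here I would insert diagonal sheaves at each stage of the chain above; by Proposition \ref{diagonal action} they act by honest multiplication, so the running product absorbs an arbitrary element of $\mathbf R^{W_{[\mathbf w]^{\mf c}}}$ for each intermediate row type $\mathbf w$. What remains is a purely invariant-theoretic assertion: the subring of $\mathbf R$ generated by the invariant rings $\mathbf R^{W_{[\mathbf w]^{\mf c}}}$ attached to the block partitions occurring along the chain must exhaust $\mathbf R^{W_{[A]^{\mf c}}}$. I expect this to be the main obstacle. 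It cannot be reduced to the naive slogan ``$\mathbf R^{H}$ and $\mathbf R^{K}$ generate $\mathbf R^{H\cap K}$'', which already fails for the relevant row and column Young subgroups (e.g. two crossed $S_2\times S_2$ subgroups whose combined degree-one invariants span only a codimension-one subspace); the argument must genuinely exploit the interleaving of the elementary moves with the Cartan insertions, and in type C it must in addition track the hyperoctahedral factor $\mathbb Z_2^{k}\ltimes S_k$ on the central block. This is the combinatorial heart of the statement, of the same flavor as the involved computation deferred to the appendix in Proposition \ref{representation prop}, and I would isolate it as a separate lemma before concluding the induction.
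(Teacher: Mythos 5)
Your skeleton --- the multiplicative filtration by $\preceq$, leading-term analysis via Proposition \ref{inducpropo}, divided powers extracted from the unit $P(q)$ of Example \ref{fgenerator}, and diagonal insertions via Proposition \ref{diagonal action} --- is exactly the strategy of the paper (both following Vasserot's Proposition 10 in \cite{V98}). But the step you single out as ``the combinatorial heart'' and defer to an unproven lemma is precisely where the content of the theorem lies, so as written your argument has a genuine gap: you never establish that the products you can form actually span the top graded piece $\mathbf{R}^{W_{[A]^{\mf c}}}_{\mathbb{K}}$.

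The paper closes this gap by organizing the induction differently, in a way that makes the needed invariant-theoretic input much smaller than the question you pose. It inducts on the statistic $l(C)=\sum_{i>j}\binom{|i-j|+1}{2}c_{ij}$ and at each step peels off a \emph{single} sub-diagonal entry: choosing $(h,l)$ minimal with $c_{hl}\neq 0$, it writes $C=A\circ B$ with $A=E^{\theta}_{h,h-1}(\cdot,c_{hl})$ elementary and $l(B)<l(C)$, so that Proposition \ref{inducpropo} applies and $f\,\bar\star\, g=fg$ exactly. The only surjectivity ever required is therefore that of the \emph{two-factor} multiplication map $\mathbf{R}^{W_{[A]^{\mf c}}}\otimes\mathbf{R}^{W_{[B]^{\mf c}}}\to\mathbf{R}^{W_{[C]^{\mf c}}}$ for this very specific pair of (quasi-)parabolic subgroups: when $(h-1,l)\neq(n+1,n+1)$ this is literally Vasserot's type-A statement, and when $(h-1,l)=(n+1,n+1)$ it reduces to the single hyperoctahedral claim that $\mathbf{R}^{\mathbb{Z}_2^{|C^{\mf c}_{n+1,n+1}|}\ltimes S_{C^{\mf c}_{n+1,n+1}}}$ is the image of $\mathbf{R}^{S_{a_{h,h-1}}}\otimes\mathbf{R}^{\mathbb{Z}_2^{|b^{\mf c}_{n+1,n+1}|}\ltimes S_{b^{\mf c}_{n+1,n+1}}}$. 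Your formulation --- realizing $A$ by a whole chain of elementary moves and then asking whether the subring generated by all the intermediate Cartan pieces exhausts $\mathbf{R}^{W_{[A]^{\mf c}}}$ --- is a strictly harder question, and your (correct) observation that general crossed Young subgroups fail the naive generation slogan is a symptom of having set it up too broadly; the one-entry-at-a-time reduction is what keeps the pairs special enough for the statement to hold. To complete your proof you would either adopt that reduction or prove your chain lemma directly, and the latter is not routine.
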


\begin{proof}
The proof is similarly as that for \cite[Proposition 10]{V98}.
For readers' convenience, we repeat it here.
Fix a matrix $C = (c_{ij}) \in \Xi_d$. We prove it by induction
on $$l(C) = \sum\limits_{i > j}\left(
                                 \begin{array}{c}
                                   |i - j| + 1 \\
                                   2\\
                                 \end{array}
                               \right)
c_{ij}.$$
If $l(C) = 0$ or $l(C) = 1$, then $C$ itself is a generator of $K^{\mathbb{C}^{*} \times G}(Z_C)_{\mathbb{K}}$.
There is nothing to show.
We now assume that $l(C)>1$.
We only need show it for the case that $h > n$.  In this case $E_{h+1,h}^{\theta} = E_{N-h,N+1-h}^{\theta}$. Let
$$(h,l) =\text{min}\{(i,j)\mid 1 \leq j < i \leq n,\ c_{ij} \neq 0\}, $$
where
$$(i,j) \geq (s,t) \Leftrightarrow j > t ~\text{or}~ (j = t ~\text{and}~ i> s).$$
Let
$B = C + c_{hl}(E_{h-1,l}^{\theta} - E_{h,l}^{\theta}) \quad \text{and} \quad  A = E_{h,h-1}^{\theta}(\mathbf{v} - c_{h,l}\mathbf{e}_h - c_{h,l}\mathbf{e}_{2n+2-h},c_{h,l}).$
By Proposition \ref{inducpropo}, we have
$$f \bar \star g = fg \in \mathbf{R}_{\mathbb{A}}^{W_{[C]^{\mf c}}}, \quad \forall \ f \in \mathbf{R}_{\mathbb{A}}^{W_{[A]^{\mf c}}},\ g\in \mathbf{R}_{\mathbb{A}}^{W_{[B]^{\mf c}}}.$$

If $(h-1,l) \neq (n+1,n+1)$, by the same argument  as that for \cite[Proposition 10]{V98}, the map
$\bar \star:  \mathbf{R}_{\mathbb{A}}^{W_{[A]^{\mf c}}} \otimes  \mathbf{R}_{\mathbb{A}}^{W_{[B]^{\mf c}}} \longrightarrow  \mathbf{R}_{\mathbb{A}}^{W_{[C]^{\mf c}}}$ is surjective.

If $(h-1,l) = (n+1,n+1)$,
for any partition $ (v_1,v_2,v_1)$ of $2d$,
the algebra $\mathbf{R}_{\mathbb{A}}^{W_{[2d]^{\mf c}}}$ is generated by $\mathbf{R}^{\mathbb{Z}_2^{d} \ltimes  S_d}$ and $\mathbf{R}^{S_{v_1}}$.
 Thus  the surjectivity of the map
  $\bar \star:  \mathbf{R}_{\mathbb{A}}^{W_{[A]}} \otimes  \mathbf{R}_{\mathbb{A}}^{W_{[B]}} \longrightarrow  \mathbf{R}_{\mathbb{A}}^{W_{[C]}}$
  follows from the surjectivity of the map
$$\mathbf{R}_{\mathbb{A}}^{S_{a_{h,h-1}}} \otimes \mathbf{R}_{\mathbb{A}}^{\mathbb{Z}_2^{|b_{n+1,n+1}^{\mbf c}|} \ltimes  S_{b_{n+1,n+1}^{\mbf c}}} \longrightarrow \mathbf{R}_{\mathbb{A}}^{\mathbb{Z}_2^{|C_{n+1,n+1}^{\mbf c}|} \ltimes  S_{C_{n+1,n+1}^{\mbf c}}} . $$

 Similar to Example \ref{fgenerator}, any sheaf supported on $Z_{E^{\theta}_{i+1,i}(\mathbf{v},a)}$ can be obtaind by the convolution product of the sheaves supported on  $Z_{E^{\theta}_{i+1,i}(\mathbf{v} + k \mathbf{e}_i - k \mathbf{e}_{2n+2-i},1)},$ $ k = 0,1,\cdots, a-1$. Thus  $(K^{\mathbb C^{*} \times G}(Z_C)_{\mathbb{K}},\star)$ can be generated by classes of the sheaves supported on the irreducible components $Z_A$ with $A \in  \Xi_d$ diagonal or type $E_{i+1,i}^{\theta}(\mathbf{v},1)$.

 By Proposition \ref{diagonal action}, for any $\mathbf{v} \in \Lambda_{\mf c}$, the map
$$\bar \star:\mathbf{R}^{W_{[\mathbf{v} + \mathbf{e}_{i+1} + \mathbf{e}_{2n+1-i}]^{\mf c}}}\otimes [\mathscr{F}_{i,\mathbf{v}}]\otimes \mathbf{R}^{W_{[\mathbf{v} + \mathbf{e}_{i+1} + \mathbf{e}_{2n+1-i}]^{\mf c}}}\longrightarrow \mathbf{R}^{W_{[E_{i+1,i}^{\theta}(\mathbf{v},1)]^{\mf c}}}$$
is surjective. The theorem follows.
\end{proof}

\subsection{The $K^{\mathbb{C}^{*} \times G}(\mcal{Z})$-module structure on $K^{\mathbb{C}^{*} \times G}(\mcal{M})$}
\label{Generate relation of type C}

Recall $\mcal Z=\mcal M\times_{\mcal N} \mcal M$. Convolution product naturally defines an action of $K^{\mathbb{C}^{*} \times G}(\mcal{Z})$ on $K^{\mathbb{C}^{*} \times G}(\mcal{M})$ as follows.
 $$ \rho: K^{\mathbb{C}^{*} \times G}(\mcal{Z}) \otimes K^{\mathbb{C}^{*} \times G}(\mcal{M}) \longrightarrow  K^{\mathbb{C}^{*} \times G}(\mcal{M}), \quad f\otimes g \mapsto f\star g.$$
\begin{lemma}~\cite[claim.7.6.7]{CG}
\label{faithful lemma}
The action $\rho$ gives a faithful representation of $K^{\mathbb{C}^{*} \times G}(\mcal{Z})$ on $K^{\mathbb{C}^{*} \times G}(\mcal{M})$.
\end{lemma}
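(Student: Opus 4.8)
The statement is the type~C avatar of \cite[Claim 7.6.7]{CG}, and the plan is to run Chriss--Ginzburg's localization argument in the present setting. Write $R=R(\mbb C^{*}\times G)=\mathbb A\otimes R(G)$ for the coefficient ring, so that $\rho$ is a homomorphism of $R$-algebras from $K^{\mbb C^{*}\times G}(\mcal Z)$ into $\mathrm{End}_{R}\bigl(K^{\mbb C^{*}\times G}(\mcal M)\bigr)$. Faithfulness amounts to showing that any $\xi\in K^{\mbb C^{*}\times G}(\mcal Z)$ with $\xi\star g=0$ for all $g\in K^{\mbb C^{*}\times G}(\mcal M)$ must vanish. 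The idea is to detect $\xi$ after localizing $R$ at the maximal ideal $\mathfrak{m}_{a}$ of a single generic semisimple element $a=(z,s)\in\mbb C^{*}\times G$, with $s\in G$ regular semisimple and $z\in\mbb C^{*}$ generic, and then to descend using that $K^{\mbb C^{*}\times G}(\mcal Z)$ is free over the domain $R$.

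First I would compute the fixed loci. A point $(F,x)\in\mcal M=T^{*}\mcal F$ is fixed by $a$ exactly when $sF=F$ and $z^{-2}sxs^{-1}=x$; since $s$ is regular its centraliser is a maximal torus, so there are only finitely many $s$-stable isotropic flags, and for generic $z$ the eigenvalue condition forces $x=0$. Thus $\mcal M^{a}$ is a finite set of isolated reduced points, and because a nonzero nilpotent cannot be $a$-fixed we get $\mcal N^{a}=\{0\}$, whence $\mcal Z^{a}=(\mcal M\times_{\mcal N}\mcal M)^{a}=\mcal M^{a}\times\mcal M^{a}$. By the localization theorem in equivariant $K$-theory, restriction to fixed points is an isomorphism after inverting $\mathfrak{m}_{a}$, so $K^{\mbb C^{*}\times G}(\mcal M)_{\mathfrak{m}_{a}}\cong\bigoplus_{F\in\mcal M^{a}}R_{\mathfrak{m}_{a}}$ and $K^{\mbb C^{*}\times G}(\mcal Z)_{\mathfrak{m}_{a}}\cong\bigoplus_{(F,F')\in\mcal M^{a}\times\mcal M^{a}}R_{\mathfrak{m}_{a}}$.

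Next I would invoke the bivariant form of the localization theorem---the same tool used in Section~\ref{coideal C}---to see that $\star$ is compatible with restriction to fixed points. Since $\mcal Z^{a}=\mcal M^{a}\times\mcal M^{a}$ with $\mcal M^{a}$ finite, this identifies $K^{\mbb C^{*}\times G}(\mcal Z)_{\mathfrak{m}_{a}}$ with the full matrix algebra $\mathrm{End}_{R_{\mathfrak{m}_{a}}}\bigl(K^{\mbb C^{*}\times G}(\mcal M)_{\mathfrak{m}_{a}}\bigr)$, under which $\rho$ becomes the tautological action on column vectors. As the standard representation of a matrix algebra is faithful, $\xi\star g=0$ for all $g$ forces the image of $\xi$ in $K^{\mbb C^{*}\times G}(\mcal Z)_{\mathfrak{m}_{a}}$ to be zero. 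Finally, the decomposition \eqref{eq2} shows that $K^{\mbb C^{*}\times G}(\mcal Z)$ is an iterated extension of the modules $\mathbf R^{W_{[A]^{\mf c}}}_{\mathbb A}$, each free over $R(G)_{\mathbb A}$ by Pittie--Steinberg; hence $K^{\mbb C^{*}\times G}(\mcal Z)$ is free, so torsion-free, over the domain $R$, and therefore injects into its localization $K^{\mbb C^{*}\times G}(\mcal Z)_{\mathfrak{m}_{a}}$. Combining the last two sentences gives $\xi=0$.

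\emph{The main obstacle} is the compatibility of convolution with the localization isomorphism, together with the identification of the localized algebra as a genuine matrix algebra carrying the correct Euler-class normalisation (the self-intersection factors $\lambda_{-1}$ of the conormal bundles to the fixed points), since it is precisely this that lets the faithfulness of the standard representation be applied; by contrast the fixed-point computation and the torsion-free descent are routine. An alternative that sidesteps localization is an upper-triangularity argument along the $\preceq$-filtration: choosing $A$ maximal in the support of $\xi$ and acting on the summand of $K^{\mbb C^{*}\times G}(\mcal M)$ indexed by $\co(A)$, Proposition~\ref{diagonal action} together with \eqref{eq2} shows that the leading term of the result recovers the image of $\xi$ in $\mathbf R^{W_{[A]^{\mf c}}}_{\mathbb A}$, which must therefore vanish, contradicting the maximality of $A$.
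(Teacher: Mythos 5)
The paper gives no argument for this lemma at all---it is stated with a bare citation to \cite[Claim 7.6.7]{CG}---so the only meaningful comparison is between your reconstruction and the standard Chriss--Ginzburg machinery, and your reconstruction holds up. The fixed-point computation is correct: for $s$ regular semisimple and $z$ chosen so that $z^{2}$ avoids the spectrum of $\mathrm{Ad}(s)$ on $\mathfrak{sp}_{2d}$, one gets $\mcal N^{a}=\{0\}$ and hence $\mcal Z^{a}=\mcal M^{a}\times \mcal M^{a}$ with $\mcal M^{a}=\mcal F^{s}$ finite. The two pillars you lean on are both available: compatibility of convolution with concentration is exactly the bivariant localization theorem the paper already invokes for Theorem \ref{thm-coideal} (the Euler classes of the conormal bundles are units near $a$ because none of their $a$-weights is trivial), and torsion-freeness of $K^{\mathbb{C}^{*}\times G}(\mcal Z)$ over $R(\mathbb{C}^{*}\times G)$ does follow from \eqref{eq2}, since each $\mathbf R^{W_{[A]^{\mf c}}}_{\mathbb A}$ embeds in the domain $\mathbf R_{\mathbb A}$ and extensions of torsion-free modules are torsion-free. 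Two points you should make explicit in a write-up, though neither is a gap: (i) the concentration theorem at a non-central $a$ naturally lands in $K^{Z_{G}(a)\times\mathbb{C}^{*}}(\mcal Z^{a})$ localized at the finitely many maximal ideals of $R(Z_{G}(a)\times\mathbb{C}^{*})$ lying over $\mathfrak{m}_{a}$, so your ``matrix algebra over $R_{\mathfrak{m}_{a}}$'' should be read with coefficients in that localization rather than literally in $R(\mathbb{C}^{*}\times G)_{\mathfrak{m}_{a}}$; (ii) it is essential that you localize at $\mathfrak{m}_{a}$ rather than pass to the residue field $\mathbb{C}_{a}$, since only the former lets torsion-freeness give injectivity of $K^{\mathbb{C}^{*}\times G}(\mcal Z)\to K^{\mathbb{C}^{*}\times G}(\mcal Z)_{\mathfrak{m}_{a}}$---and you do handle this correctly. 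The upper-triangularity alternative you sketch at the end is plausible but would need the additional input that the ``diagonal'' matrix coefficient of $\mcal Z_{A}$ acting on the summand indexed by $\co(A)$ is injective; that does not follow from Proposition \ref{diagonal action} alone, so I would not present it as an equally finished route.
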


In the rest of this subsection, we shall identify $K^{\mathbb{C}^{*} \times G}(\mcal{M})$ with $\oplus_{\mbf v\in \Lambda_{\mathfrak c}} \mbf R^{W_{[\mbf v]^{\mathfrak c}}}$.

\subsubsection{ Action of $\mathscr{E}_{i,\mathbf{v}}$ on $K^{\mathbb{C}^{*} \times G}(\mcal{M})$}

\begin{prop}\label{e'sa action1}
Fix a  partition $\mathbf{v} \in \Lambda_{\mathfrak c}$.
 For any $ 1 \leq i \leq n$ ,  $f \in  K^{\mathbb{C}^{*}\times G}(T^*\mathcal{F}_{\mathbf{v} + \mathbf{e}_{i+1} + \mathbf{e}_{2n+1-i}})$, we have
 \begin{equation*}
\begin{split}
 &\rho(\mathscr{E}_{i,\mathbf{v}}\otimes f)
  =  W_{[\mathbf{v}+e_{i}+e_{2n+2-i}]^{\mf c}}/W_{[E_{i,i+1}^{\theta}(\mathbf{v},1)]^{\mf c}}(\prod_{\bar{v}_{i-1} < t \leq \bar{v}_{i}} \frac{ (q-q^{-1}x_{t}/x_{\bar{v}_{i}+1})}{(1-x_t/x_{\bar{v}_{i}+1})} \cdot f).
 \end{split}
 \end{equation*}
 \end{prop}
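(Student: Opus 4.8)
The plan is to realize $\rho(\mathscr{E}_{i,\mathbf{v}}\otimes f)$ as an honest pull--push along an orbit correspondence and then reduce it, block by block, to the type-A computation of Vasserot. Set $A=E_{i,i+1}^{\theta}(\mathbf{v},1)$, so that $\ro(A)=\mathbf{u}:=\mathbf{v}+\mathbf{e}_i+\mathbf{e}_{N+1-i}$ and $\co(A)=\mathbf{w}:=\mathbf{v}+\mathbf{e}_{i+1}+\mathbf{e}_{N-i}$, and write $p_1\colon\mcal O_A\to\mcal F_{\mathbf u}$, $p_2\colon\mcal O_A\to\mcal F_{\mathbf w}$ for the two projections. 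Unwinding the module convolution of Section~\ref{sec2.1} with $M_3=\mathrm{pt}$, the three projections on $\mcal M\times\mcal M$ collapse and $\rho(\mathscr{E}_{i,\mathbf v}\otimes f)=R\mathrm{pr}_{1*}\bigl(\mathscr{E}_{i,\mathbf v}\overset{L}{\otimes}\mathrm{pr}_2^{*}f\bigr)$, where $\mathrm{pr}_1,\mathrm{pr}_2\colon\mcal M\times\mcal M\to\mcal M$ are the factor projections; the properness needed for $R\mathrm{pr}_{1*}$ holds because $\mathscr{E}_{i,\mathbf v}$ is supported on the conormal bundle $\mcal Z_A$, on which $\mathrm{pr}_1$ is proper.

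First I would push the entire computation down from $\mcal M\times\mcal M=T^{*}\mcal F\times T^{*}\mcal F$ to the orbit $\mcal O_A\subseteq\mcal F\times\mcal F$. Since $\mathscr{E}_{i,\mathbf v}=\pi_1^{*}(\mathrm{Det}(T^{*}p_1))$ is pulled back from $\mcal O_A$, and since $K^{\mathbb C^{*}\times G}(\mcal M)$ is identified with $\oplus_{\mathbf v}\mathbf R^{W_{[\mathbf v]^{\mf c}}}$ through the zero section and the Thom isomorphism, the derived tensor product and the direct image collapse to a pull--push on $\mcal O_A$ corrected by the $K$-theoretic Euler class of the conormal directions of $\mcal Z_A$ inside $\mcal M\times\mcal M$. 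Crucially, these conormal directions carry the extra $\mathbb C^{*}$--weight coming from the scaling $(z,g)\cdot(F,x)=(gF,z^{-2}gxg^{-1})$, so the factor they contribute is a $q$--twisted version of the purely spatial Euler factor appearing in Proposition~\ref{pushforwad C}(b).

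With this reduction in hand I would invoke Proposition~\ref{pushforwad C}(a) to identify $K^{G}(\mcal F_{\mathbf w})\cong\mathbf R^{W_{[\mathbf w]^{\mf c}}}$ and $K^{G}(\mcal O_A)\cong\mathbf R^{W_{[A]^{\mf c}}}$; because $W_{[A]^{\mf c}}\subseteq W_{[\mathbf w]^{\mf c}}$, the pullback $p_2^{*}$ is just the inclusion of invariants and introduces no Euler factor, so $\mathrm{pr}_2^{*}f$ becomes $f$ viewed in the larger ring. The remaining direct image $Rp_{1*}$ is then evaluated by Proposition~\ref{pushforwad C}(b), which yields the symmetrization operator $W_{[\mathbf u]^{\mf c}}/W_{[A]^{\mf c}}$ together with the spatial Euler factor $\prod_{(s,t)}(1-x_s/x_t)^{-1}$. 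The only off-diagonal entries of $A$ in the general-linear range are $a_{ii}=v_i$ and $a_{i,i+1}=1$, so the relevant pairs are $s\in[A]_{ii}=\{\bar v_{i-1}+1,\dots,\bar v_i\}$ and the single index $t=\bar v_i+1$ of $[A]_{i,i+1}$; this is exactly the denominator $\prod_{\bar v_{i-1}<t\le\bar v_i}(1-x_t/x_{\bar v_i+1})^{-1}$ in the statement.

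It remains to assemble the numerator. The determinant line $\mathrm{Det}(T^{*}p_1)$ contributes the weight $\prod_{s\in[A]_{ii}}(x_s/x_{\bar v_i+1})$, which when combined with the $q$--twisted conormal Euler factor of the second step produces $\prod_{t}(q-q^{-1}x_t/x_{\bar v_i+1})=q^{v_i}\prod_t(1-q^{-2}x_t/x_{\bar v_i+1})$. Because $1\le i\le n$, the moving box lies entirely in the $GL$-part of the picture, so the symplectic node $n+1$ --- and hence the $\mathbb Z_2$--factors together with the $x_lx_m$ and $x_px_q$ terms of Proposition~\ref{pushforwad C}(b) --- plays no role, and this final combination is identical to Vasserot's type-A computation in \cite[Proposition~10]{V98}. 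The main obstacle is precisely this last bookkeeping: checking that the Thom-isomorphism normalization, the determinant twist $\mathrm{Det}(T^{*}p_1)$, and the $\mathbb C^{*}$--weights on the conormal directions combine with the correct sign and exact power of $q$ to give the numerator $\prod_t(q-q^{-1}x_t/x_{\bar v_i+1})$ and not some other $q$--shift; once the weights are tracked honestly, the claimed formula follows.
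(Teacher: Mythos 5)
Your proposal follows essentially the same route as the paper's proof: compute the relative cotangent class $T^{*}_{p_1}$ of the orbit $\mathcal{O}_{E_{i,i+1}^{\theta}(\mathbf{v},1)}$ from \eqref{cotangent formula}, observe that for $1\le i\le n$ only the type-A block $[A]_{ii}\times[A]_{i,i+1}$ contributes, and then combine $\mathrm{Det}(T^{*}_{p_1})$, the $q^{2}$-twisted Euler class of the conormal directions, and the pushforward formula of Proposition \ref{pushforwad C} via Vasserot's convolution computation in \cite{V98}. The only difference is presentational (you narrate the reduction from $\mcal M\times\mcal M$ to $\mcal O_A$ more explicitly and defer the final power-of-$q$ bookkeeping, which the paper likewise leaves to the citation of \cite{V98}), so this is the same argument.
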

\begin{proof}
Equation \eqref{cotangent formula} implies that $T_{p_{1}}^{*}=\sum_{ \bar{v}_{i-1} < t \leq  \bar{v}_{i}} x_t/x_{\bar{v}_{i}+1}$.
Hence, we further have
 \begin{eqnarray*}
&&\text{Det}(T_{p_{1}}^{*})=\prod_{ \bar{v}_{i-1} < t \leq  \bar{v}_{i}} x_t/x_{\bar{v}_{i}+1}, \\
&&[\bigwedge_{q^2}T_{p_{1}}]= \prod_{\bar{v}_{i-1} < t \leq  \bar{v}_{i}}(1-q^2x_{\bar{v}_{i}+1}/x_t), \\
&&[\bigwedge T^{*}_{p_{1}}]= \prod_{\bar{v}_{i-1} < t \leq  \bar{v}_{i}}(1-x_t/x_{\bar{v}_{i}+1}).
\end{eqnarray*}

By Proposition \ref{pushforwad C} and Corollary 4 in \cite{V98}, the proposition follows.
\end{proof}

\subsubsection{ Action of $\mathscr{F}_{i,\mathbf{v}}$ on $K^{\mathbb{C}^{*} \times G}(\mcal{M})$}

\begin{prop} \label{f's action1}  Let $\mathbf{v}= (v_1, \cdots, v_N)$ be a partition of $2d-2$ such that $v_i = v_{N+1-i}$.

(a)  If $ 1 \leq i < n$ ,  for any $f \in  K^{\mathbb{C}^{*} \times G}(T^*\mathcal{F}_{\mathbf{v} + \mathbf{e}_{i} + \mathbf{e}_{2n+2-i}})$, we have
 \begin{equation*}
\begin{split}
 &\rho(\mathscr{F}_{i,\mathbf{v}}\otimes f)
  =   W_{[\mathbf{v}+e_{i+1}+e_{2n+1-i}]^{\mf c}}/W_{[E_{i+1,i}^{\theta}(\mathbf{v},1)]^{\mf c}}(\prod_{\bar{v}_{i} + 1 < t \leq  \bar{v}_{i+1}+1} \frac{ (q-q^{-1}x_{\bar{v}_{i}+1}/x_{t})}{(1-x_{\bar{v}_{i}+1}/x_{t})} \cdot f).
 \end{split}
 \end{equation*}

 (b) If $i = n$,  for any $f \in  K^{\mathbb{C}^{*} \times G}(T^*\mathcal{F}_{\mathbf{v} + \mathbf{e}_{n} + \mathbf{e}_{n+2}})$, we have
 \begin{equation*}
\begin{split}
 &\rho(\mathscr{F}_{n,\mathbf{v}}\otimes f)\\
  = &  W_{[\mathbf{v}+2e_{n+1}]^{\mf c}}/W_{[E_{n+1,n}^{\theta}(\mathbf{v},1)]^{\mf c}}((\prod_{\bar{v}_{n} + 1 < t \leq  d} \frac{q-q^{-1}x_t^{-1}x_{\bar{v}_{n}+1}}{1-x_t^{-1}x_{\bar{v}_{n}+1}}\frac{q-q^{-1}x_tx_{\bar{v}_{n}+1}}{1-x_tx_{\bar{v}_{n}+1}})\frac{q-q^{-1}x_{\bar{v}_{n}+1}^2}{1-x_{\bar{v}_{n}+1}^2} \cdot f).
 \end{split}
 \end{equation*}
\end{prop}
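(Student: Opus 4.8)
The plan is to mirror the proof of Proposition~\ref{e'sa action1}: compute the class of the relative cotangent bundle along the fibers of the first projection $q_1$, and then assemble the convolution product through the pushforward formula of Proposition~\ref{pushforwad C}(b). First I would fix the matrix $A = E_{i+1,i}^{\theta}(\mathbf{v},1)$ in part~(a) (resp. $A = E_{n+1,n}^{\theta}(\mathbf{v},1)$ in part~(b)), read off the partition $[A]^{\mf c}$ together with the index blocks $[A]_{jk}$, and specialize formula~\eqref{cotangent formula} to these data. Since $\mathscr{F}_{i,\mathbf{v}} = \rho_1^{*}(\mathrm{Det}(T^{*}q_1))$ is the determinant twist pulled back along $\rho_1$, the convolution $\rho(\mathscr{F}_{i,\mathbf{v}}\otimes f)$ reduces to a single determinant-twisted pushforward $R_{q_1 *}$, so the whole computation is local on the orbit $\mathcal{O}_A$.

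For part~(a), the perturbed entries of $A = E_{i+1,i}^{\theta}(\mathbf{v},1)$ sit in a row $\leq n$, i.e.\ strictly outside the central (type~C) region, so only the first sum $\sum_{(s,t)}x_s/x_t$ in~\eqref{cotangent formula} survives. This gives $T_{q_1}^{*} = \sum_{\bar{v}_i+1 < t \leq \bar{v}_{i+1}+1} x_{\bar{v}_i+1}/x_t$, and the classes $\mathrm{Det}(T_{q_1}^{*})$, $[\bigwedge_{q^2}T_{q_1}]$ and $[\bigwedge T_{q_1}^{*}]$ are then computed exactly as in Proposition~\ref{e'sa action1}, with the fixed index $\bar{v}_i+1$ now appearing in the numerator rather than the denominator. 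Combining these via Corollary~4 in~\cite{V98} produces the single product of factors $\frac{q-q^{-1}x_{\bar{v}_i+1}/x_t}{1-x_{\bar{v}_i+1}/x_t}$ asserted in~(a).

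The hard part will be part~(b), the case $i = n$. Here the perturbed entries of $A = E_{n+1,n}^{\theta}(\mathbf{v},1)$ lie in the central row $n+1$, so all three sums in~\eqref{cotangent formula} contribute: besides the type~A term $x_l/x_m$ one picks up the symplectic pairing $x_l x_m$ from the $(l,m)$-range and the self-pairing $x_p x_q$ (including the square $x_{\bar{v}_n+1}^{2}$ coming from the diagonal pair $p = q$) from the $(p,q)$-range. The delicate bookkeeping is to enumerate these pairs correctly for $E_{n+1,n}^{\theta}(\mathbf{v},1)$ under the half-size convention $[\mathbf{v}]^{\mf c}_{n+1} = [\bar{v}_n+1,d]$ and the $\mathbb{Z}_2^{|[\mathbf{v}]_{n+1}^{\mf c}|}\ltimes S_{[\mathbf{v}]_{n+1}^{\mf c}}$-symmetry of the central block; this is exactly what produces the extra factors $\frac{q-q^{-1}x_t x_{\bar{v}_n+1}}{1-x_t x_{\bar{v}_n+1}}$ and $\frac{q-q^{-1}x_{\bar{v}_n+1}^{2}}{1-x_{\bar{v}_n+1}^{2}}$ that have no analogue in part~(a) or in Proposition~\ref{e'sa action1}. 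Once the three pieces of $T_{q_1}^{*}$ are identified, the final assembly through Proposition~\ref{pushforwad C}(b) and Corollary~4 in~\cite{V98} is routine.
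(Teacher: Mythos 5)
Your proposal follows the same route as the paper: specialize formula \eqref{cotangent formula} to $A=E_{i+1,i}^{\theta}(\mathbf{v},1)$, note that for $i<n$ only the type-A term $\sum_{\bar v_i+1<t\le \bar v_{i+1}+1}x_{\bar v_i+1}/x_t$ survives while for $i=n$ the central row contributes the pairing terms $x_t^{\pm 1}x_{\bar v_n+1}$ and the square $x_{\bar v_n+1}^2$, and then assemble the answer via Proposition \ref{pushforwad C}(b) and Corollary 4 of \cite{V98}. This matches the paper's proof in both structure and the key computations, so the proposal is correct.
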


\begin{proof}
If $1\leq i < n$,  Equation \eqref{cotangent formula} implies that $T_{p_{1}}^{*}=\sum_{ \bar{v}_{i} + 1 < t \leq  \bar{v}_{i+1}+1} x_{\bar{v}_{i}+1}/x_{t}$.
Hence, we have
\begin{align*}
 &\text{Det}(T_{p_{1}}^{*})=\prod_{ \bar{v}_{i} + 1 < t \leq  \bar{v}_{i+1}+1} x_{\bar{v}_{i}+1}/x_{t},\\
 &[\bigwedge_{q^2}T_{p_{1}}]= \prod_{\bar{v}_{i} + 1 < t \leq  \bar{v}_{i+1}+1}(1-q^2 x_{t}/x_{\bar{v}_{i}+1}),\\
 & [\bigwedge T^{*}_{p_{1}}]= \prod_{ \bar{v}_{i} + 1 < t \leq  \bar{v}_{i+1}+1}(1-x_{\bar{v}_{i}+1}/x_{t}).
\end{align*}

In the case that $i = n$, we have
\begin{align*}
& T_{p_{1}}^*= \sum_{\bar{v}_n+1 < t \leq d}( x_t^{-1}x_{\bar{v}_{n}+1} +x_tx_{\bar{v}_{n}+1})+ x_{\bar{v}_{n}+1}^2,\\
&\text{Det}(T_{p_{1}}^*) = \prod_{\bar{v}_n+1 < t \leq d}(x_t^{-1}x_{\bar{v}_{n}+1} \cdot x_tx_{\bar{v}_{n}+1})\cdot x_{\bar{v}_{n}+1}^2,\\
& \bigwedge_{q^2} T_{p_{1}} =(\prod_{\bar{v}_n+1 < t \leq d}(1-q^{2}x_tx^{-1}_{\bar{v}_{i}+1})(1-q^{2}(x^{-1}_tx^{-1}_{\bar{v}_{n}+1}))) (1-q^{2}x^{-2}_{\bar{v}_{n}+1}),\\
& \bigwedge T_{p_{1}}^* =(\prod_{\bar{v}_n+1 < t \leq d}(1-x_t^{-1}x_{\bar{v}_{n}+1})(1-x_tx_{\bar{v}_{n}+1})) (1-x_{\bar{v}_{n}+1}^2).
\end{align*}
By Proposition \ref{pushforwad C} and Corollary 4 in \cite{V98} again, the proposition follows.
\end{proof}

\section{The coideal subalgebra of $\U_q(\mf{gl}_N)$}

\subsection{The algebra $\Ub$}
  Let us recall the  presentation of the algebra $\Ub$ (denoted by $\U^{\jmath}$ therein) from \cite{BKLW14}.
\begin{defn}\label{def3.1}
The algebra $\Ub$ is the unitary associative $\mathbb C(q)$-algebra generated by
$$
e_i, f_i, h_a^{\pm 1},   \quad
\forall i\in [1,n],\  a\in [1, n+1],
$$
subject to  the following relations
\begin{eqnarray*}
&&h_ah_b=h_b h_a,\quad h_ah_a^{-1}=1,\\
&&h_ae_ih_a^{-1}=q^{ \delta_{a, i} -\delta_{a,i+1}  - \delta_{2n+2-a,i+1}}e_i, \\
&&h_af_ih_a^{-1}=q^{- \delta_{ai} + \delta_{a,i+1} + \delta_{2n+2-a,i+1}}f_i,\\
&& e_if_j-f_je_i=\delta_{ij}\frac{h_ih_{i+1}^{-1}-h_i^{-1}h_{i+1}}{q-q^{-1}},  \quad \quad \,   {\rm if} \ i, j\neq n,\\
&&e_ie_j=e_je_i,\quad f_if_j=f_jf_i,\quad \hspace{2.1cm}  {\rm if}\ |i-j|>1,\\
&&e_i^2e_j-(q+q^{-1})e_ie_je_i+e_je_i^2=0,\quad \hspace{.51cm}  {\rm if}\ |i-j|=1,\\
&&f_i^2f_j-(q+q^{-1})f_if_jf_i+f_jf_i^2=0,\quad \hspace{.71cm}  {\rm if}\ |i-j|=1,\\
&&e^2_nf_n+f_ne_n^2=(q+q^{-1})(e_nf_ne_n-e_n(qh_nh_{n+1}^{-1}+q^{-1}h_n^{-1}h_{n+1})), \\
&& f_n^2e_n+e_nf_n^2=(q+q^{-1})(f_ne_nf_n-(qh_nh_{n+1}^{-1}+q^{-1}h_n^{-1}h_{n+1})f_n)
\end{eqnarray*}
 for $i,j = 1,2,\cdots,n$ and  $a,b = 1,2,\cdots,n+1$.
\end{defn}
 The algebra $\Ub$ is a coideal subalgebra of $\U_q(\mrk{gl}_N)$ with coideal structure $\Delta^{\mf b} :\Ub \rightarrow \Ub \otimes \U_q(\mrk{gl}_N) $ given by
 \begin{align*}
\begin{split}
\Delta^{\mf b}( e_i)
&= e_i \otimes  H_{i+1}  H^{-1}_{2n+1-i} +  h^{-1}_{i+1} \otimes  E_iH^{-1}_{2n+1-i} +  h_{i+1} \otimes  F_{2n+1-i}  H_{i+1}.  \\
 \Delta^{\mf b} (f_i)
 & = f_i \otimes H^{-1}_{i} H_{2n+2-i} +  h_i\otimes  F_i H_{2n+2-i} +  h^{-1}_{i} \otimes  E_{2n+1-i}  H^{-1}_{i}. \\
 \Delta^{\mf b} ( h_i) & =  h_i \otimes  H_i H^{-1}_{2n+1-i}.
\end{split}
\end{align*}

\subsection{A polynomial representation of $\Ub$}
 Recall  $\mathbf{R} = \mathbb{C}[x_1^{\pm 1},x_2^{\pm 1},\cdots,x_d^{\pm 1}]$. Set $\mathbf{K} = \oplus _{\mathbf{v} \in \Lambda_{\mf c}} \mathbf{R}^{W_{[\mathbf{v}]^{\mf c}}}$. Let
$$\hat{h}_i \in \oplus_{\mathbf{v} \in \Lambda_{\mf c}}\mathrm{Hom} (\mathbf{R}_{\mathbb{A}}^{W_{[\mathbf{v}]^{\mf c}}},\mathbf{R}_{\mathbb{A}}^{W_{[\mathbf{v}]^{\mf c}}}),$$
 $$\hat{e}_i,~\hat{f}_i \in \oplus_{\mathbf{v} \in \Lambda_{\mf c}}\mathrm{Hom} (\mathbf{R}_{\mathbb{A}}^{W_{[\mathbf{v}]^{\mf c}}},\mathbf{R}_{\mathbb{A}}^{W_{[\mathbf{v}\pm e_i \pm e_{2n+2-i} \mp e_{i+1} \mp e_{2n+1-i}]^{\mf c}}})$$
 be the following operators
 $$\hat{h}_i(f) = q^{v_i}f,$$
  \begin{equation*}
\begin{split}
 &\hat{e}_i(f)
  =  W_{[\mathbf{v}']^{\mf c}}/(W_{[\mathbf{v}']^{\mf c}}\cap W_{[\mathbf{v}]^{\mf c}})(\prod_{\bar{v}_{i-1} < t \leq \bar{v}_{i}} \frac{ (q-q^{-1}x_{t}/x_{\bar{v}_{i}+1})}{(1-x_t/x_{\bar{v}_{i}+1})} \cdot f),
 \end{split}
 \end{equation*}

 $ \hat{f}_i(f) =\left\{\begin{array}{ll}
 W_{[\mathbf{v}'']^{\mf c}}/(W_{[\mathbf{v}'']^{\mf c}}\cap W_{[\mathbf{v}]^{\mf c}})(\prod_{\bar{v}_{i}  < t \leq  \bar{v}_{i+1}} \frac{ (q-q^{-1}x_{\bar{v}_{i}}/x_{t})}{(1-x_{\bar{v}_{i}}/x_{t})} \cdot f)& \text{if} \ i < n, \\[.15in]
W_{[\mathbf{v}'']^{\mf c}}/(W_{[\mathbf{v}'']^{\mf c}}\cap W_{[\mathbf{v}]^{\mf c}})((\prod_{\bar{v}_{n}  < t \leq  d} \frac{q-q^{-1}x_t^{-1}x_{\bar{v}_{n}+1}}{1-x_t^{-1}x_{\bar{v}_{n}}}\frac{q-q^{-1}x_tx_{\bar{v}_{n}}}{1-x_tx_{\bar{v}_{n}}})\frac{q-q^{-1}x_{\bar{v}_{n}}^2}{1-x_{\bar{v}_{n}}^2} \cdot f)& \text{if}\  i=n,
 \end{array}   \right.
$
where $f \in \mathbf{R}_{\mathbb{A}}^{W_{[\mathbf{v}]^{\mf c}}}$, $\mathbf{v}'=\mathbf{v} +  e_i + e_{2n+2-i} - e_{i+1} - e_{2n+1-i}$, $\mathbf{v}'' = \mathbf{v}- e_i - e_{2n+2-i}  +  e_{i+1}  +  e_{2n+1-i}$.
\begin{prop}\label{polynomial rep C}
The map
$$e_i \mapsto \hat{e}_i,~f_i \mapsto \hat{f}_i,~h_i \mapsto \hat{h}_i, $$
 defines a representation of $\Ub$ on $\mathbf{K}_{\mathbb{K}}$.
\end{prop}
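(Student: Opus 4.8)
The plan is to verify directly that the operators $\hat e_i,\hat f_i,\hat h_i$ satisfy every defining relation of $\Ub$ listed in Definition \ref{def3.1}. Since $\mathbf K_{\mathbb K}$ is graded by $\Lambda_{\mf c}$ and each relation is an identity of graded operators $\mathbf R^{W_{[\mathbf v]^{\mf c}}}_{\mathbb K}\to\mathbf R^{W_{[\mathbf v^{\star}]^{\mf c}}}_{\mathbb K}$, it suffices to apply both sides to a general $f\in\mathbf R^{W_{[\mathbf v]^{\mf c}}}_{\mathbb K}$ and compare the resulting rational functions. First I would record that, up to the normalizing scalars absorbed into $\mathscr E_i,\mathscr F_i$, the operators $\hat e_i,\hat f_i$ are exactly the convolution actions of $\mathscr E_{i,\mathbf v},\mathscr F_{i,\mathbf v}$ computed in Propositions \ref{e'sa action1} and \ref{f's action1}; this already guarantees that they are well defined, i.e.\ that they preserve the grading and land in the correct invariant rings. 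The two Cartan families $h_ah_b=h_bh_a$ and $h_ah_a^{-1}=1$ are immediate because $\hat h_a$ acts by the scalar $q^{v_a}$, and the relations $h_a\hat e_ih_a^{-1}=q^{\delta_{a,i}-\delta_{a,i+1}-\delta_{2n+2-a,i+1}}\hat e_i$ and its $f$-analogue are pure bookkeeping: they follow by reading off the change of the grading under $\mathbf v\mapsto\mathbf v'=\mathbf v+e_i+e_{2n+2-i}-e_{i+1}-e_{2n+1-i}$ and noting that the mirror term $\delta_{2n+2-a,i}$ vanishes in the range $a\in[1,n+1]$, $i\in[1,n]$.

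For the relations that do not involve the boundary node $n$, I would exploit the fact that for $i<n$ the products $\prod(q-q^{-1}x_t/x_s)/(1-x_t/x_s)$ and the symmetrizers $W_{[\mathbf v']^{\mf c}}/(W_{[\mathbf v']^{\mf c}}\cap W_{[\mathbf v]^{\mf c}})$ involve only the variables lying in the $S$-blocks $[\mathbf v]_1,\dots,[\mathbf v]_n$, so that $\hat e_i,\hat f_i$ restrict to the divided-difference operators of type $A$ used by Vasserot \cite{V98}. Consequently the commutativity relations $e_ie_j=e_je_i$ and $f_if_j=f_jf_i$ for $|i-j|>1$ (the operators act on disjoint blocks of variables), the mixed commutator $e_if_j-f_je_i=\delta_{ij}(h_ih_{i+1}^{-1}-h_i^{-1}h_{i+1})/(q-q^{-1})$ for $i,j\neq n$, and the two quantum Serre relations for $|i-j|=1$ with $i,j<n$ all reduce to the corresponding identities already established in the $\mathfrak{gl}_N$ setting.

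The genuinely new content, and the main obstacle, is the boundary node $n$: the modified commutator and the two $\imath$-Serre relations, i.e.\ the last two displayed relations in Definition \ref{def3.1}. Here $\hat f_n$ is given by the second branch of its definition and carries, besides the type-$A$-like factors, the extra symplectic factors $(1-x_tx_{\bar v_n})^{-1}$ and the doubling factor $(1-x_{\bar v_n}^2)^{-1}$, which originate from the skew form and from the hyperoctahedral factor $\mathbb Z_2^{m}\ltimes S_m$ of $W_{\mf c}$. The degree-three words $\hat e_n^2\hat f_n+\hat f_n\hat e_n^2$ and $\hat e_n\hat f_n\hat e_n$ therefore expand into nested symmetrizations over this factor, and I would verify each identity by reducing it to a local computation in the distinguished variable $x_{\bar v_n}$ that the operators actually move, treating the remaining $x_t$ as spectators fixed by the outer symmetrizer. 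After clearing the common denominators, what remains is a partial-fraction identity of rational functions in $q$ and $x_{\bar v_n}$.

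The hard part will be matching the right-hand sides of the $\imath$-Serre relations: the Cartan term $qh_nh_{n+1}^{-1}+q^{-1}h_n^{-1}h_{n+1}$, acting as the scalar $q^{1+v_n-v_{n+1}}+q^{-1-v_n+v_{n+1}}$, together with the precise $(q+q^{-1})$ coefficients, must be produced from the cancellations among the $\mathbb Z_2$-orbit sums as they interact with the factor $(1-x_{\bar v_n}^2)^{-1}$. This bookkeeping is the combinatorially involved step that the paper isolates and defers. Once the boundary identities are confirmed, every defining relation of $\Ub$ holds for $\hat e_i,\hat f_i,\hat h_i$, so the assignment $e_i\mapsto\hat e_i$, $f_i\mapsto\hat f_i$, $h_i\mapsto\hat h_i$ extends to an algebra homomorphism and hence defines a representation of $\Ub$ on $\mathbf K_{\mathbb K}$.
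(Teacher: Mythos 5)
Your overall strategy coincides with the paper's: check each defining relation of $\Ub$ directly on $\mathbf{K}_{\mathbb{K}}$, observe that the Cartan relations are immediate, that everything away from the node $n$ reduces to Vasserot's type $A$ computations, and isolate the two $\imath$-Serre relations at $n$ as the real content. The paper's appendix does exactly this, after first noting that these two relations only involve $e_n,f_n,h_n,h_{n+1}$ and hence that it suffices to treat the case $n=1$ with a weight of the form $I=(0,i,2d-2i,i)$ --- a reduction you do not state but which is what makes the computation writable.

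The gap is that you stop exactly where the proof begins. The identity $e_n^2f_n+f_ne_n^2=(q+q^{-1})\bigl(e_nf_ne_n-e_n(qh_nh_{n+1}^{-1}+q^{-1}h_n^{-1}h_{n+1})\bigr)$ is not a one-variable partial-fraction identity in $x_{\bar v_n}$ with the other variables acting as spectators: each of $\hat e_n$, $\hat f_n$ is a full coset sum for the symmetrizer $W_{[\mathbf{v}']^{\mf c}}/(W_{[\mathbf{v}']^{\mf c}}\cap W_{[\mathbf{v}]^{\mf c}})$, and in particular $\hat f_n$ sums over both transpositions $(k,i)$ and sign changes $[1]_k$ coming from the hyperoctahedral factor. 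Consequently $\hat e_n^2\hat f_n$, $\hat f_n\hat e_n^2$ and $\hat e_n\hat f_n\hat e_n$ expand into double and triple sums indexed by which variables are moved and which are inverted. The paper's entire proof consists of grouping these terms by the resulting group element (identity, a transposition $(k,l)$, a transposition times $[1]_l$, a product $(m,i+1)(k,l)$, etc.), and verifying that the cross-term coefficients $C_{kl},D_{klm},E_{kl},F_{klm}$ vanish identically while the diagonal coefficients $A$ and $B_l$ both equal $-(q^{2d-3i-1}+q^{3i+1-2d})$, which is precisely what produces the correction term $e_n(qh_nh_{n+1}^{-1}+q^{-1}h_n^{-1}h_{n+1})$ with its $(q+q^{-1})$ prefactor. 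None of this is deferred bookkeeping --- it is the proof. To complete your argument you would need either to reproduce this term-by-term cancellation or to supply an independent mechanism (for instance a residue or symmetrization lemma) forcing the cross terms to vanish and computing the diagonal contribution; as written, the decisive step is asserted rather than established.
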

\begin{proof}
We shall give a detail proof in appendix.
\end{proof}

\subsection{K-theoretic approach to $\Ub$}

By Propositions \ref{e'sa action1},  \ref{f's action1} and  \ref{polynomial rep C}, we have the following proposition.
\begin{prop}
\label{representation prop}
There is an algebra homomorphism $\Ub \longrightarrow End(K^{\mathbb{C}^{*} \times G}(\mcal M)_{\mathbb{K}})$ sending
$$e_i \mapsto \mathscr{E}_{i},\  f_i \mapsto \mathscr{F}_i,\ h_i^{\pm} \mapsto \mathscr{H}_i^{\pm}.$$
\end{prop}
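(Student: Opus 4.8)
The plan is to reduce the statement to the purely combinatorial Proposition~\ref{polynomial rep C} via the identification $K^{\mathbb{C}^{*}\times G}(\mcal{M})_{\mathbb{K}}\cong \mathbf{K}_{\mathbb{K}}$. Recall from Proposition~\ref{pushforwad C}(a) that $K^{G}(\mathcal{F}_{\mathbf{v}})\cong \mathbf{R}^{W_{[\mathbf{v}]^{\mf c}}}$; summing over $\mathbf{v}\in\Lambda_{\mf c}$ and extending scalars to $\mathbb{K}$ yields a $\mathbb{K}$-linear isomorphism
$$
\Psi:\ K^{\mathbb{C}^{*}\times G}(\mcal{M})_{\mathbb{K}}\ \xrightarrow{\ \sim\ }\ \mathbf{K}_{\mathbb{K}}=\bigoplus_{\mathbf{v}\in\Lambda_{\mf c}}\mathbf{R}^{W_{[\mathbf{v}]^{\mf c}}}_{\mathbb{K}},
$$
which identifies $End(K^{\mathbb{C}^{*}\times G}(\mcal{M})_{\mathbb{K}})$ with $End(\mathbf{K}_{\mathbb{K}})$. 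First I would record that convolution really gives an algebra action: by the associativity of the convolution product (Section~\ref{sec2.1}) the assignment $a\mapsto\rho(a\otimes-)$ is an algebra homomorphism $K^{\mathbb{C}^{*}\times G}(\mcal{Z})_{\mathbb{K}}\to End(K^{\mathbb{C}^{*}\times G}(\mcal{M})_{\mathbb{K}})$, which is moreover faithful by Lemma~\ref{faithful lemma}. Hence it is enough to prove that the operators $\rho(\mathscr{E}_i\otimes-)$, $\rho(\mathscr{F}_i\otimes-)$, $\rho(\mathscr{H}_i^{\pm}\otimes-)$ satisfy the defining relations of Definition~\ref{def3.1}.

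The key step is the operator identity $\Psi\circ\rho(\mathscr{E}_i\otimes-)\circ\Psi^{-1}=\hat{e}_i$, together with its analogues for $\mathscr{F}_i$ and $\mathscr{H}_i$. For $\mathscr{H}_i$ this is immediate: $\mathscr{H}_{i,\mathbf{v}}$ is supported on $\mcal{Z}_{\text{diag}(\mathbf{v})}$ and corresponds to $q^{v_i}$, so Proposition~\ref{diagonal action} gives $\rho(\mathscr{H}_i\otimes f)=q^{v_i}f=\hat{h}_i(f)$ on the $\mathbf{v}$-summand; since $\mathscr{H}_i$ acts by the invertible scalar $q^{v_i}$ it is invertible, so $h_i^{-1}\mapsto\mathscr{H}_i^{-1}$ is well defined. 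For $\mathscr{E}_i$ and $\mathscr{F}_i$ I would compare the push-forward formulas of Propositions~\ref{e'sa action1} and \ref{f's action1} with the definitions of $\hat{e}_i$, $\hat{f}_i$: the product factors over the ranges $\bar{v}_{i-1}<t\le\bar{v}_i$ (respectively the three ranges occurring for $\hat{f}_n$) agree term by term, and the normalization quotients match once one checks $W_{[E_{i,i+1}^{\theta}(\mathbf{v},1)]^{\mf c}}=W_{[\mathbf{v}']^{\mf c}}\cap W_{[\mathbf{v}]^{\mf c}}$, i.e. that the orbit stabilizer is the common stabilizer of the source and target flags. The genuine bookkeeping lies in the scalars: the factors $(-q)^{-v_i}$ and $(-q)^{-v_{i+1}-\delta_{n,i}}$ built into the definitions of $\mathscr{E}_i$ and $\mathscr{F}_i$ are precisely what converts the raw push-forwards of Propositions~\ref{e'sa action1} and \ref{f's action1} into the normalized operators $\hat{e}_i,\hat{f}_i$ of the polynomial representation.

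Granting this identification, Proposition~\ref{polynomial rep C} says exactly that $e_i\mapsto\hat{e}_i$, $f_i\mapsto\hat{f}_i$, $h_i\mapsto\hat{h}_i$ respects every relation of Definition~\ref{def3.1} inside $End(\mathbf{K}_{\mathbb{K}})$. Transporting this back along $\Psi$ produces the desired algebra homomorphism $\Ub\to End(K^{\mathbb{C}^{*}\times G}(\mcal{M})_{\mathbb{K}})$ sending $e_i\mapsto\mathscr{E}_i$, $f_i\mapsto\mathscr{F}_i$, $h_i^{\pm}\mapsto\mathscr{H}_i^{\pm}$. I expect the main obstacle to be the careful matching of conventions: the parameter $\mathbf{v}$ indexing the source flag in Propositions~\ref{e'sa action1} and \ref{f's action1} is shifted relative to the $\mathbf{v}$ indexing the domain $\mathbf{R}^{W_{[\mathbf{v}]^{\mf c}}}$ of $\hat{e}_i,\hat{f}_i$ (through $\mathbf{v}'=\mathbf{v}+e_i+e_{2n+2-i}-e_{i+1}-e_{2n+1-i}$), so the Weyl-subgroup quotients and the sign-and-$q$ normalizations must be aligned before the term-by-term comparison goes through. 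Once this dictionary is fixed, the verification is a direct computation and the relations follow from Proposition~\ref{polynomial rep C} rather than being rechecked by hand.
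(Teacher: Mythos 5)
Your proposal is correct and follows essentially the same route as the paper, whose proof of Proposition~\ref{representation prop} consists precisely of combining the push-forward formulas of Propositions~\ref{e'sa action1} and~\ref{f's action1} (together with Proposition~\ref{diagonal action} for the $\mathscr{H}_i$) with the polynomial representation of Proposition~\ref{polynomial rep C} under the identification $K^{\mathbb{C}^{*}\times G}(\mcal{M})\cong\oplus_{\mathbf{v}}\mathbf{R}^{W_{[\mathbf{v}]^{\mf c}}}$ fixed in Section~\ref{Generate relation of type C}. Your only superfluous step is the appeal to Lemma~\ref{faithful lemma}, which is not needed for a homomorphism into $End(K^{\mathbb{C}^{*}\times G}(\mcal{M})_{\mathbb{K}})$ and is reserved in the paper for lifting to $K^{\mathbb{C}^{*}\times G}(\mcal{Z})_{\mathbb{K}}$ in Theorem~\ref{main theorem C}.
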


By Lemma \ref{faithful lemma} and  Proposition \ref{representation prop}, we obtain the following main result of this section.
\begin{theorem}
\label{main theorem C}
The assignment
$$e_i \mapsto \mathscr{E}_{i}, \ f_i \mapsto \mathscr{F}_i,\ h_i^{\pm} \mapsto \mathscr{H}_i^{\pm}$$
can be extended to an algebra homomorphism $\Ub \rightarrow K^{\mathbb{C}^{*} \times G}(\mcal{Z})_{\mathbb{K}}$.
\end{theorem}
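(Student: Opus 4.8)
The plan is to combine the faithful representation from Lemma \ref{faithful lemma} with the explicit polynomial representation already established. By Lemma \ref{faithful lemma}, the convolution action $\rho$ gives a faithful $\mathbb{K}$-algebra embedding
$$
\rho: K^{\mathbb{C}^{*} \times G}(\mcal{Z})_{\mathbb{K}} \hookrightarrow \mathrm{End}(K^{\mathbb{C}^{*} \times G}(\mcal{M})_{\mathbb{K}}).
$$
Under the identification $K^{\mathbb{C}^{*} \times G}(\mcal{M})_{\mathbb{K}} \cong \oplus_{\mbf v \in \Lambda_{\mf c}} \mbf R^{W_{[\mbf v]^{\mf c}}}_{\mathbb{K}}$, Propositions \ref{e'sa action1} and \ref{f's action1} compute the operators $\rho(\mathscr{E}_i)$, $\rho(\mathscr{F}_i)$ explicitly, and the sheaves $\mathscr{H}_i$ act by the scalars $q^{v_i}$ on each summand. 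The central observation is that these operators coincide \emph{on the nose} with the operators $\hat e_i$, $\hat f_i$, $\hat h_i$ defining the polynomial representation of $\Ub$ in Proposition \ref{polynomial rep C}.

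First I would make this matching precise. Comparing the formula in Proposition \ref{e'sa action1} with the definition of $\hat e_i$ shows they agree once one checks that the two descriptions of the induction map $W_{[\mathbf{v}+e_i+e_{2n+2-i}]^{\mf c}}/W_{[E^{\theta}_{i,i+1}(\mathbf{v},1)]^{\mf c}}$ and $W_{[\mathbf{v}']^{\mf c}}/(W_{[\mathbf{v}']^{\mf c}}\cap W_{[\mathbf{v}]^{\mf c}})$ are the same; this is a routine bookkeeping identity between the Weyl subgroups attached to the orbit matrix $E^{\theta}_{i,i+1}(\mathbf{v},1)$ and to its row/column partitions. The same comparison applies to $\mathscr{F}_i$ versus $\hat f_i$, splitting into the cases $i<n$ and $i=n$ exactly as in Proposition \ref{f's action1}. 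I would record this as an identification $\rho(\mathscr{E}_i)=\hat e_i$, $\rho(\mathscr{F}_i)=\hat f_i$, $\rho(\mathscr{H}_i^{\pm})=\hat h_i^{\pm}$.

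With this in hand, Proposition \ref{polynomial rep C} states that $e_i\mapsto\hat e_i$, $f_i\mapsto\hat f_i$, $h_i\mapsto\hat h_i$ respects all the defining relations of $\Ub$ from Definition \ref{def3.1}, so the assignment $e_i\mapsto\mathscr{E}_i$, $f_i\mapsto\mathscr{F}_i$, $h_i^{\pm}\mapsto\mathscr{H}_i^{\pm}$ lands in $\mathrm{End}(K^{\mathbb{C}^{*}\times G}(\mcal{M})_{\mathbb{K}})$ as an algebra homomorphism $\Phi:\Ub\to\mathrm{End}(K^{\mathbb{C}^{*}\times G}(\mcal{M})_{\mathbb{K}})$, which is precisely the content of Proposition \ref{representation prop}. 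The final step is to lift $\Phi$ through the embedding $\rho$. Because $\mathscr{E}_i,\mathscr{F}_i,\mathscr{H}_i^{\pm}$ are genuine classes in the convolution algebra $K^{\mathbb{C}^{*}\times G}(\mcal{Z})_{\mathbb{K}}$, the images $\Phi(e_i),\Phi(f_i),\Phi(h_i^{\pm})$ all lie in the image of $\rho$; since $\rho$ is an injective algebra map, $\rho^{-1}\circ\Phi$ is a well-defined algebra homomorphism $\Ub\to K^{\mathbb{C}^{*}\times G}(\mcal{Z})_{\mathbb{K}}$ sending $e_i\mapsto\mathscr{E}_i$, $f_i\mapsto\mathscr{F}_i$, $h_i^{\pm}\mapsto\mathscr{H}_i^{\pm}$. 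The only subtlety I anticipate is confirming that faithfulness of $\rho$ together with the fact that $\mathscr{E}_i,\mathscr{F}_i,\mathscr{H}_i^{\pm}$ are actual convolution-algebra elements is enough to descend the relations: a relation holding among the $\Phi$-images in the endomorphism algebra pulls back, via injectivity of $\rho$, to the identical relation among the sheaf classes in $K^{\mathbb{C}^{*}\times G}(\mcal{Z})_{\mathbb{K}}$. Thus the main obstacle is not analytic but rather the careful verification that the operator formulas of Section \ref{Generate relation of type C} and the abstract operators of Proposition \ref{polynomial rep C} are literally equal, after which the theorem is formal.
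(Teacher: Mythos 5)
Your proposal is correct and follows exactly the paper's argument: the paper also deduces Theorem \ref{main theorem C} by combining Proposition \ref{representation prop} (which matches the convolution operators of Propositions \ref{e'sa action1} and \ref{f's action1} with the polynomial operators of Proposition \ref{polynomial rep C}) with the faithfulness of $\rho$ from Lemma \ref{faithful lemma}, so that the defining relations verified in $\mathrm{End}(K^{\mathbb{C}^{*}\times G}(\mcal{M})_{\mathbb{K}})$ pull back along the injective algebra map $\rho$ to relations among the classes $\mathscr{E}_i,\mathscr{F}_i,\mathscr{H}_i^{\pm}$ in $K^{\mathbb{C}^{*}\times G}(\mcal{Z})_{\mathbb{K}}$. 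No substantive difference from the paper's proof.
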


\subsection{The coideal structure}
\label{coideal C}

For any semisimple element $ \alpha =(c,s) \in \mathbb{C}^{*}\times G$,
let $\epsilon: \mathbf{R}(\mathbb{C}^{*}\times G) \rightarrow \mathbb{C},\ f \mapsto f(\alpha)$ be the evaluation map
 and $\mathbb{C}_\alpha$ be the 1-dimensional $\mathbf{R}(\mathbb{C}^{*}\times G)$-module with $f$ acting via multiplication by $f(\alpha)$.
 For any algebraic variety $Y$,
denote by $Y^{\alpha}$  the fixed point subvariety of $Y$ by $\alpha$.
Assume that $s $ is a semisimple element in $G$ with $2k - 1$ distinct eigenvalues $\{\lambda_{1}^{\pm 1},\lambda_2^{\pm 1}, \cdots, \lambda_{k-1}^{\pm 1}, 1\}$ with multiplicity $d_1, \cdots,d_{k-1}, 2d_k$, respectively and set $\alpha=(1,s)\in \mathbb{C}^{*}\times G$. There are natural isomorphisms
 \begin{align*}
&G^s\cong GL_{d_1}\times GL_{d_2}\times \dots\times GL_{d_{k-1}}\times Sp_{d_k},\\
&\mcal{Z}^\alpha \cong \mcal{Z}_{d_1}^{\mf a}\times \mcal{Z}_{d_2}^{\mf a} \times \dots \times \mcal{Z}_{d_{k-1}}^{\mf a} \times \mcal{Z}_{d_{k}}^{\mf c},
 \end{align*}
where 
$\mcal{Z}_{d_i}^{\mf a}$ is the Steinberg variety of type A of rank $d_i$ and $\mcal{Z}_{d_k}^{\mf c}$ is the generalized Steinberg variety of type C of rank $d_k$.
Let $\mathcal{N}_\alpha^{*}$ be the conormal bundle of  the subvariety $\mcal{Z}^\alpha \hookrightarrow \mcal{Z}$. The $\mathbb{C}^{*} \times G$ action on $\mcal{Z}$ induces a natural $\alpha$-action on $\mathcal{N}_\alpha^{*}$. Set
$$\lambda(\mcal{Z}^\alpha) = \sum_k (-1)^k {\rm tr}(\alpha; \wedge^k\mathcal{N}_\alpha^{*}).$$
Let $\mcal{M}_1, ~ \mcal{M}_2$ be the $\mathbb{C}^{*}\times G$ variety $\mcal M = T^*\mathcal F $ and $j:\mcal{Z} \hookrightarrow \mcal{M}_1\times \mcal{M}_2$ be the imbedding.
Define  a morphism $\mathbf{r}_\alpha: \mathbb{C}_\alpha \otimes_{\mathbf{R}(\mathbb{C}^{*}\times G)} K^{\mathbb{C}^{*}\times G}(\mcal{Z}) \rightarrow \mathbb{C}_\alpha \otimes_{\mathbf{R}((\mathbb{C}^{*}\times G)^\alpha)} K^{(\mathbb{C}^{*}\times G)^{\alpha}}(\mcal{Z}^\alpha)$ by
$$c\otimes \mrk{F} \mapsto c\otimes \sum(-1)^k \text{Tor}_{\mathcal{O}_{\mcal{M}_1^\alpha \times \mcal{M}_2^\alpha}}^k(\lambda(\mcal{M}_1^\alpha)^{-1}\times \mathcal{O}_{\mcal{M}_2^\alpha},j_{!}\mrk F).$$
The K\"unneth  formula implies that  there is a natural isomorphism
$$\tau : K^{(\mathbb{C}^{*}\times G)^{\alpha}}(\mcal{Z})\simeq  K^{\mathbb{C}^{*} \times GL_{d_1} } (\mcal{Z}_{d_1}^{\mf a})\otimes \dots\otimes K^{\mathbb{C}^{*} \times GL_{d_{k-1}}  }(\mcal{Z}_{d_{k-1}}^{\mf a}) \otimes  K^{\mathbb{C}^{*}\times G_{d_k} }(\mcal{Z}_{d_{k}}).$$
Then by  the bivariant fixed-point theorem \cite[Theorem 5.7]{GV93} and the K\"unneth formula, we have the following theorem.
\begin{theorem}\label{thm-coideal}
Let $\Delta^{k-1}$ be the composition of $\mathbf{r}_\alpha$ and $\tau$, then
$$\Delta^{k-1}:\mathbb{C}_\alpha \otimes K^{ \mathbb{C}^{*}\times G}(\mcal{Z}) \rightarrow \mathbb{C}_\alpha \otimes K^{\mathbb{C}^{*} \times GL_{d_1}  } (\mcal{Z}_{d_1}^{\mf a})\otimes \dots\otimes K^{\mathbb{C}^{*} \times GL_{d_{k-1}} }(\mcal{Z}_{d_{k-1}}^{\mf a}) \otimes  K^{\mathbb{C}^{*} \times G_{d_k}}(\mcal{Z}_{d_{k}})$$
 is an algebra homomorphism.
\end{theorem}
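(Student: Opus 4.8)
The plan is to factor the map $\Delta^{k-1}=\tau\circ\mathbf{r}_\alpha$ through two separate multiplicativity statements: that $\mathbf{r}_\alpha$ respects the convolution product, and that the K\"unneth isomorphism $\tau$ respects it as well. Since a composition of algebra homomorphisms is again an algebra homomorphism, the theorem reduces to these two claims.

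For the first claim I would invoke the bivariant fixed-point theorem \cite[Theorem 5.7]{GV93}. That result asserts that the bivariant restriction to the $\alpha$-fixed locus $\mcal{Z}^\alpha\hookrightarrow\mcal{Z}$, namely the derived pullback $j_!$ corrected by the inverse K-theoretic Euler class $\lambda(\mcal{M}_1^\alpha)^{-1}$ of the normal bundle of $\mcal{M}_1^\alpha$ in $\mcal{M}_1$, intertwines the convolution product $\star$ on $K^{\mathbb{C}^{*}\times G}(\mcal{Z})$ with the convolution product on $K^{(\mathbb{C}^{*}\times G)^{\alpha}}(\mcal{Z}^\alpha)$. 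The work here is to check that the $\mathrm{Tor}$-expression defining $\mathbf{r}_\alpha$ is precisely the bivariant restriction of \cite[Theorem 5.7]{GV93}, and that its hypotheses hold in our situation: the fixed loci $\mcal{M}_i^\alpha$ are smooth, and passage to $\mathbb{C}_\alpha\otimes_{\mathbf{R}(\mathbb{C}^{*}\times G)}(-)$ is exactly what renders $\lambda(\mcal{Z}^\alpha)$ invertible, since $\mathcal{N}_\alpha^{*}$ carries no $\alpha$-fixed vectors and hence the specialization of each exterior-power trace is a unit. I would also record the structural fact that, because $s$ lies in the symplectic group, its eigenvalues occur in reciprocal pairs $\lambda_i^{\pm1}$ together with the eigenvalue $1$ of even multiplicity $2d_k$; this is what forces the decomposition $\mcal{Z}^\alpha\cong \mcal{Z}_{d_1}^{\mf a}\times\cdots\times\mcal{Z}_{d_{k-1}}^{\mf a}\times\mcal{Z}_{d_k}^{\mf c}$, with type A factors from the paired eigenvalues and a single type C factor from the fixed eigenvalue.

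For the second claim I would verify that $\tau$ is multiplicative. Under the product decomposition $\mcal{M}^\alpha\cong\prod_i\mcal{M}_{d_i}$ the three projections $p_{12},p_{23},p_{13}$ governing the convolution on $\mcal{Z}^\alpha$ all split as products of the corresponding projections on the individual factors, and both the derived tensor product and the proper pushforward commute with external products. Hence by the K\"unneth formula the convolution product on $K^{(\mathbb{C}^{*}\times G)^{\alpha}}(\mcal{Z}^\alpha)$ is identified with the tensor product of the convolution products on the factors $K^{\mathbb{C}^{*}\times GL_{d_i}}(\mcal{Z}_{d_i}^{\mf a})$ and $K^{\mathbb{C}^{*}\times G_{d_k}}(\mcal{Z}_{d_k})$, which is exactly the assertion that $\tau$ is an algebra isomorphism. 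Composing with the first claim yields that $\Delta^{k-1}$ is an algebra homomorphism.

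The main obstacle I anticipate is the bookkeeping inside the first claim: one must make sure the Euler-class twist $\lambda(\mcal{M}_1^\alpha)^{-1}$ correctly absorbs the excess-intersection contribution arising from the convolution diagram for $\star$, so that the specialized map $\mathbf{r}_\alpha$ is both well-defined and multiplicative on $\mathbb{C}_\alpha\otimes(-)$. This is precisely the content encapsulated by \cite[Theorem 5.7]{GV93}, so once the geometric hypotheses above are in place the remaining verification is essentially formal.
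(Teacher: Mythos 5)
Your proposal follows exactly the paper's own route: the paper proves Theorem \ref{thm-coideal} by citing the bivariant fixed-point theorem \cite[Theorem 5.7]{GV93} to handle $\mathbf{r}_\alpha$ and the K\"unneth formula to handle $\tau$, which is precisely your two-step factorization. Your additional remarks on verifying the hypotheses (smoothness of the fixed loci, invertibility of $\lambda(\mcal{Z}^\alpha)$ after specializing at $\alpha$, and the eigenvalue pairing forcing the product decomposition of $\mcal{Z}^\alpha$) are correct and in fact supply detail the paper leaves implicit.
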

By Theorem 8 in \cite{V}, there is an algebra homomorphism $\U_q(\mrk{gl}_{d_i}) \rightarrow K^{\mathbb{C}^{*} \times GL_{d_i}  } (\mcal{Z}_{d_i}^{\mf a})$.
Hence the above theorem gives a coideal structure of $\Ub$.

\section{The coideal subalgebra $\Uc$}

\subsection{The algebra $\Uc$}

In \cite[Section 6]{BKLW14}, authors consider another $\imath$-quantum group, $\Uc$ (denoted by ${}^C\U$ therein).
By using perverse sheaves on $\mcal F^{\mathfrak c} \times \mcal F^{\mathfrak c}$, a geometric approach to $\Uc$ is obtained.
It has been shown that $\Uc \simeq \Ub$ in \cite{BKLW14}.
In this section, we shall provide a K-theoretic approach to $\Uc$ by using generalized Steinberg variety of type B.
Although, $\Uc\simeq \Ub$, in general, $K^{\mbb C^*\times Sp(\mbb C)}(\mcal Z^{\mathfrak c})$ is not isomorphic to $K^{\mbb C^*\times O(\mbb C)}(\mcal Z^{\mathfrak b})$.
Moreover, K-theoretic approach to the coideal structure of $\Uc$ is obtained.

\subsection{K-theoretic approach to $\Uc$}
Let $V =\mbb C^{2d + 1}$ with a nondegenerate symmetric bilinear form $( , )$ and $G_{\mf b} = O(V)$.
 Set
$$\Lambda_{\mf b}=\{ \mathbf{v}=(v_i) \in \mbb N^{N} \mid   v_i = v_{N+1-i},\quad \textstyle \sum_{i=1}^{N} v_i = 2d+1 \}.$$
By abuse notations,  for any $W\subseteq V$, we still denote by $W^{\perp}$ the orthogonal complement of $W$ with respect to this symmetric bilinear form.
For any $\mbf v \in \Lambda_{\mf b}$, we further set
$$\mathcal{F}^{\mf b}_{\mathbf{v}}=\{ F=( 0=V_0 \subset V_1  \cdots \subset V_{2n+1}=V)\mid  V_i=V_{2n+1-i}^{\perp} ,\text{dim}( V_{i}/V_{i-1})= v_i, \forall i \}. $$
The componentwise action of $G_{\mf b}$ on $\mcal F^{\mf b}_{\mbf v}$ is transitive.
For any $F \in \mathcal{F}^{\mf b}_{\mathbf{v}}$, let $P_{\mathbf{v}} $ be the stabilizer of $F$ in $G_{\mf b}$.
Therefore, we have $G_{\mf b}/P_{\mathbf{v}}\simeq \mcal F_{\mbf v}$.
Let $\mathcal{F}^{\mf b} = \sqcup _{\mathbf{v} \in \Lambda_{\mf b}} \mathcal{F}^{\mf b}_{\mathbf{v}}$.
Group $G_{\mf b}$ naturally acts on $\mcal F^{\mf b}$.

Similar as Section \ref{steinberg variety of type c}, we define $\mcal M^{\mf b} = T^* \mcal F^{\mf b}$ and $\mcal Z^{\mf b} = \mcal M^{\mf b} \times_{\mcal N} \mcal M^{\mf b}$.
The later is the generalized Steinberg variety of type B.
In this subsection, we shall calculate the convolution algebra
$(K^{\mathbb{C}^{*} \times G_{\mf b}}(\mcal{Z}^{\mf b}), \star)$ similarly as Section 2.

Let $\Pi_d=(a_{ij})_{1\leq i,j \leq N}$ be the set of  all $N\times N$ matrices with  nonnegative integer  entries such that $a_{ij}=a_{N+1-i,N+1-j}$ for $ i,j \leq n+1$ and $\sum_{i,j}a_{ij}=2d+1$.
 Similarly, there is a bijection between  $G_{\mf b}$-obits in $\mathcal{F}^{\mf b} \times \mathcal{F}^{\mf b}$ and $\Pi_d$.
 Moreover, we have the following two bijections
\begin{equation*}
  \begin{split}
\varphi: \Lambda_{\mf b} \rightarrow \Lambda_{\mf c},\quad & \mathbf{v} \mapsto \mbf v-(\delta_{i,n+1})_{i=1,\cdots, N}, \\
 \psi: \Pi_d \rightarrow \Xi_d, \quad & A \mapsto A-E_{n+1,n+1}.
  \end{split}
\end{equation*}
Then there is an order $``\preceq"$ on $\Pi_d(\mathbf{v},\mathbf{w})$, $\forall \mathbf{v},\mathbf{w} \in \Lambda_{\mf b}$ via the above bijections.
Set
$$W_{[\mathbf{v}]^{\mf b}}= W_{[\varphi({\mathbf{v}})]^{\mf c}},\quad W_{[A]^{\mf b}}= W_{[\psi(A)]^{\mf c}}.$$
By Proposition 6.7  in \cite{BKLW14}, the orbit structure of type B is the same as that of type C under the above bijections.
So we can obtain similar results as Section \ref{Generators of type C}.
Define $\mathcal{E}_{i,\mathbf{v}}$ and $\mathcal{F}_{i,\mathbf{v}}$ in the same way as $\mathscr{E}_{i,\mathbf{v}}$ and $\mathscr{F}_{i,\mathbf{v}}$, respectively.
 Let $\mathcal{H}_{i,\mathbf{v}}^{\pm 1}$ be the sheaves supported on the $\mcal{Z}^{\mf b}_{\text{diag}(\mathbf{v})}$, which corresponds to $q^{\pm(v_i - \delta_{n+1,i})}$.
 We set
\begin{equation*}
  \begin{split}
\mathcal{E}_{i}= \sum\limits_{\mathbf{v} \in \Lambda_{\mf b}} (-q)^{-v_i}\mathcal{E}_{i,\mathbf{v}}, &\quad \mathcal{F}_{i}= \sum\limits_{\mathbf{v}\in \Lambda_{\mf b} } (-q)^{-v_{i+1}}\mathcal{F}_{i,\mathbf{v}},\\
\mathcal{H}_i =& \sum\limits_{\mathbf{v} \in \Lambda_{\mf b}} \mathcal{H}_{i,\mathbf{v}}.
  \end{split}
\end{equation*}
 The following is an analogue of  Theorem \ref{generator C}.
  We shall skip the proof..

\begin{theorem}\label{generator B}
The convolution algebra $(K^{\mathbb{C}^{*} \times G_{\mf b}}(\mcal{Z}^{\mf b})_{\mathbb{K}},\star)$ is generated by $\mathcal{E}_i$,\ $\mathcal{F}_i$ and the sheaves supported on the orbits indexed by a diagonal matrix in $\Pi_d$.
\end{theorem}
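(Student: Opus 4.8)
The plan is to follow the inductive argument of Theorem \ref{generator C} almost verbatim, transporting each ingredient from type C to type B along the two bijections $\varphi\colon\Lambda_{\mf b}\to\Lambda_{\mf c}$ and $\psi\colon\Pi_d\to\Xi_d$, $A\mapsto A-E_{n+1,n+1}$. By \cite[Proposition 6.7]{BKLW14} these bijections match the $G_{\mf b}$-orbit stratification of $\mcal F^{\mf b}\times\mcal F^{\mf b}$, together with its closure order $\preceq$ and the matrix product $A\circ B$, with the corresponding data in type C, while the Weyl-group stabilisers are \emph{defined} by $W_{[\mbf v]^{\mf b}}=W_{[\varphi(\mbf v)]^{\mf c}}$ and $W_{[A]^{\mf b}}=W_{[\psi(A)]^{\mf c}}$. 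Since $\psi$ alters only the central entry $a_{n+1,n+1}$, the off-diagonal entries --- and hence the length statistic $l(C)=\sum_{i>j}\binom{|i-j|+1}{2}c_{ij}$ governing the induction --- are unchanged under $\psi$. The first task is therefore to record the type B analogues of Propositions \ref{pushforwad C}, \ref{orbit structure}, \ref{diagonal action} and \ref{inducpropo}: the closure-order and $A\circ B$ statements are immediate from \cite[Proposition 6.7]{BKLW14}, and the push-forward formula of Proposition \ref{pushforwad C}(b) is formally identical, since the relative cotangent class $T_A^{\ast}$ and the determinant bundles depend only on the combinatorics recorded by $W_{[A]^{\mf b}}$.

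With these analogues in hand, I would run the induction on $l(C)$ for $C\in\Pi_d$. For $l(C)\le 1$ the matrix $C$ is itself one of the asserted generators, so nothing is needed. For $l(C)>1$ I would, exactly as in type C, locate the minimal off-diagonal entry (in the order used in the proof of Theorem \ref{generator C}), factor $C=A\circ B$ with $A$ of the form $E^{\theta}_{h,h-1}(\mbf v,a)$ and $l(B)<l(C)$, and apply the type B version of Proposition \ref{inducpropo} to obtain $f\bar\star g=fg\in\mbf R^{W_{[C]^{\mf b}}}_{\mbb A}$. The surjectivity of $\bar\star$ onto $\mbf R^{W_{[C]^{\mf b}}}_{\mbb A}$ then follows from the Example \ref{fgenerator}-type computation, whose binomial factor $P(q)=q^{ab}[a+b]!/([a]![b]!)$ is invertible over $\mbb K$; the reduction of the $a>1$ generators $E^{\theta}_{i+1,i}(\mbf v,a)$ to the $a=1$ generators $\mcal E_i,\mcal F_i$, and the treatment of the diagonal classes via Proposition \ref{diagonal action}, proceed as before.

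The hard part --- and the only genuinely non-routine point --- is the central node $i=n+1$, where the underlying group is the odd orthogonal group $O(V)$ rather than a symplectic group and where $\psi$ shifts $a_{n+1,n+1}$ by one. Two issues arise. First, because $G_{\mf b}=O(V)$ is disconnected (indeed $O_{2m+1}\cong SO_{2m+1}\times\{\pm I\}$), the identification $K^{G_{\mf b}}(\mcal O_A)\cong\mbf R^{W_{[A]^{\mf b}}}$ is \emph{not} a direct application of \cite[Theorem 6.1.4]{CG}, which is stated for connected reductive groups; this is precisely the discrepancy underlying the remark that $K^{\mbb C^{*}\times G_{\mf b}}(\mcal Z^{\mf b})$ and $K^{\mbb C^{*}\times G}(\mcal Z^{\mf c})$ need not be isomorphic. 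I would attempt to settle it by passing to the connected group $SO(V)$, computing the invariant ring there, and then descending along the residual $\mbb Z/2$-action, checking that the extra component is absorbed by the diagonal generators that the theorem already allows. Second, the inductive case $(h-1,l)=(n+1,n+1)$ requires the type B analogue of the factorisation used in type C, namely that $\mbf R^{W_{[2d+1]^{\mf b}}}$ is generated by $\mbf R^{\mbb Z_2^{d}\ltimes S_d}$ and $\mbf R^{S_{v_1}}$; since $\varphi$ changes $\mbf v$ only in its central box and $\psi$ changes $A$ only in $a_{n+1,n+1}$, this generation statement transports from the type C computation once the first issue is settled, and the central surjectivity then follows as in the proof of Theorem \ref{generator C}. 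Once these two points are verified the induction closes and the theorem follows.
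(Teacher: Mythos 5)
Your proposal is correct and follows essentially the same route as the paper, which in fact omits the proof of Theorem \ref{generator B} entirely, remarking only that by \cite[Proposition 6.7]{BKLW14} the orbit structure of type B matches that of type C under $\varphi$ and $\psi$ so that the argument of Theorem \ref{generator C} carries over --- precisely the transport-and-induct argument you spell out, and your extra attention to the disconnectedness of $O(V)$ addresses a point the paper glosses over. The only slight overstatement is that the push-forward formula is ``formally identical'': the central-block contribution to $T_A^{\ast}$ differs (the paper's own type B computation replaces the factor $x_{\bar v_n+1}^2$ by $x_{\bar v_n+1}$), but since the generation argument only uses the off-diagonal $GL$-type factors and the abstract surjectivity of the invariant-ring maps, this does not affect your proof.
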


In order to calculate the relations of $K^{\mathbb{C}^{*} \times G_{\mf b}}(\mcal{Z}^{\mf b})$ in term of the generators in Theorem \ref{generator B},
we still consider the faithful action $\rho_{\mf b}$ of $K^{\mathbb{C}^{*} \times G_{\mf b}}(\mcal{Z}^{\mf b})$ on  $K^{\mathbb{C}^{*} \times G_{\mf b}}(\mcal{M}^{\mf b})$ given by the convolution product
      $$ \rho_{\mf b}: K^{\mathbb{C}^{*} \times G_{\mf b}}(\mcal{Z}^{\mf b}) \otimes K^{\mathbb{C}^{*} \times G_{\mf b}}(\mcal{M}^{\mf b}) \longrightarrow  K^{\mathbb{C}^{*} \times G_{\mf b}}(\mcal{M}^{\mf b}), \quad f\otimes g \mapsto f\star g.$$

For a  partition $\mathbf{v} \in \Lambda_{\mf b}$, the action of $\mathcal{E}_{i,\mathbf{v}}$, $ 1\leq i  \leq n$ (resp. $\mathcal{F}_{j,\mathbf{v}}$, $ 1\leq j  < n$) are given by the same as that of $\mathscr{E}_{i,\varphi(\mathbf{v})}$ (resp. $\mathscr{F}_{j,\varphi(\mathbf{v})}$). Here we only need to give the action of $\mathcal{F}_{n,\mathbf{v}}$ on $K^{\mathbb{C}^{*} \times G_{\mf b}}(\mcal{M}^{\mf b})$.
In this case, we have
 \begin{align*}
&T_{p_{1}}^*= \sum_{\bar{v}_n+1 < t \leq d} x_t^{-1}x_{\bar{v}_{n}+1} +x_tx_{\bar{v}_{n}+1}+ x_{\bar{v}_{n}+1},\\
&\text{Det}(T_{p_{1}}^*) = \prod_{\bar{v}_n+1 < t \leq d}x_t^{-1}x_{\bar{v}_{n}+1} \cdot x_tx_{\bar{v}_{n}+1}\cdot x_{\bar{v}_{n}+1},\\
& \bigwedge T_{p_{1}}^* =(\prod_{\bar{v}_n+1 < t \leq d}(1-x_t^{-1}x_{\bar{v}_{n}+1})(1-x_tx_{\bar{v}_{n}+1})) (1-x_{\bar{v}_{n}+1}),\\
& \bigwedge_q T_{p_{1}}^* =(\prod_{\bar{v}_n+1 < t \leq d}(1-q^{2}x_t^{-1}x_{\bar{v}_{i}+1})(1-q^{2}x_tx_{\bar{v}_{n}+1})) (1-q^{2}x_{\bar{v}_{n}+1}).
 \end{align*}
 For any $f \in  K^{\mathbb{C}^{*} \times G_{\mf b} }(T^*\mathcal{F}^{\mf b}_{\mathbf{v} + \mathbf{e}_{n} + \mathbf{e}_{n+2}})$, we have
 \begin{equation*}
\begin{split}
 &\rho_{\mf b}(\mathscr{F}_{n,\mathbf{v}}\otimes f)\\
  = &  W_{[\mathbf{v}+2e_{n+1}]}/W_{[E_{n+1,n}^{\theta}(\mathbf{v},1)]}((\prod_{\bar{v}_{n} + 1 < t \leq  d} \frac{q-q^{-1}x_t^{-1}x_{\bar{v}_{n}+1}}{1-x_t^{-1}x_{\bar{v}_{n}+1}}\frac{q-q^{-1}x_tx_{\bar{v}_{n}+1}}{1-x_tx_{\bar{v}_{n}+1}})\frac{q-q^{-1}x_{\bar{v}_{n}+1}}{1-x_{\bar{v}_{n}+1}} \cdot f)
 \end{split}
 \end{equation*}

\begin{prop}
\label{representation prop B}
The map
$${e}_i \mapsto \mathcal{E}_{i},\ {f}_i \mapsto \mathcal{F}_i,\ {h}_i^{\pm 1} \mapsto \mathcal{H}_i^{\pm 1}$$
 extends uniquely to a representation of  $\Uc$ on  $K^{\mathbb{C}^{*} \times G_{\mf b}}(\mcal{M}^{\mf b})_{\mathbb{K}}$.
\end{prop}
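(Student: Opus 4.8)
The plan is to follow the template of Propositions \ref{polynomial rep C} and \ref{representation prop}: under the identification of $K^{\mathbb{C}^{*} \times G_{\mf b}}(\mcal{M}^{\mf b})_{\mathbb{K}}$ with $\oplus_{\mathbf{v} \in \Lambda_{\mf b}} \mathbf{R}^{W_{[\mathbf{v}]^{\mf b}}}_{\mathbb{K}}$, I would verify that the operators $\mathcal{E}_i, \mathcal{F}_i, \mathcal{H}_i^{\pm 1}$ satisfy the defining relations of $\Uc$. The assignment on generators then extends to an algebra homomorphism, and uniqueness is immediate from the universal property of the presentation.

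The first step is to transport the problem to type C via the bijections $\varphi: \Lambda_{\mf b} \to \Lambda_{\mf c}$ and $\psi: \Pi_d \to \Xi_d$. Under $\varphi$ the graded pieces $\mathbf{R}^{W_{[\mathbf{v}]^{\mf b}}} = \mathbf{R}^{W_{[\varphi(\mathbf{v})]^{\mf c}}}$ are identified with those of the type C module $\mathbf{K}$, and by the explicit formulas recalled above the operators $\mathcal{E}_i$ (all $i$), $\mathcal{F}_j$ for $j < n$, and $\mathcal{H}_i^{\pm 1}$ become exactly the type C operators of Proposition \ref{polynomial rep C} --- note that $q^{v_i - \delta_{n+1,i}}$ in type B agrees with $q^{(\varphi(\mathbf{v}))_i}$. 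Consequently every defining relation of $\Uc$ not involving $\mathcal{F}_n$ holds verbatim, being a relation already established in the type C verification.

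This reduces the proof to the relations that involve $\mathcal{F}_n$, which is the only operator whose formula genuinely differs from type C: the distinguished factor is the linear $\frac{q - q^{-1}x_{\bar{v}_n+1}}{1 - x_{\bar{v}_n+1}}$, reflecting the odd dimension $2d+1$ and the orthogonal group $G_{\mf b}$, rather than the quadratic $\frac{q - q^{-1}x_{\bar{v}_n+1}^2}{1 - x_{\bar{v}_n+1}^2}$ of type C. The relations left to check are then the weight relations $\mathcal{H}_a \mathcal{F}_n \mathcal{H}_a^{-1} = q^{\,\cdots}\, \mathcal{F}_n$, the commutation $\mathcal{F}_n \mathcal{F}_j = \mathcal{F}_j \mathcal{F}_n$ for $|n - j| > 1$, the Serre relation $\mathcal{F}_n^2 \mathcal{F}_{n-1} - (q + q^{-1}) \mathcal{F}_n \mathcal{F}_{n-1} \mathcal{F}_n + \mathcal{F}_{n-1} \mathcal{F}_n^2 = 0$, and the two $\imath$-Serre relations of $\Uc$ at the distinguished node $n$ that couple $\mathcal{E}_n, \mathcal{F}_n$ with the Cartan term $q\mathcal{H}_n \mathcal{H}_{n+1}^{-1} + q^{-1}\mathcal{H}_n^{-1}\mathcal{H}_{n+1}$.

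The main obstacle is the pair of $\imath$-Serre relations at node $n$, in particular the one of the form $\mathcal{F}_n^2 \mathcal{E}_n + \mathcal{E}_n \mathcal{F}_n^2 = (q+q^{-1})(\mathcal{F}_n \mathcal{E}_n \mathcal{F}_n - (\cdots)\mathcal{F}_n)$. I would verify it by a direct symmetrization computation on $\mathbf{R}^{W_{[\mathbf{v}]^{\mf b}}}$, of the same flavor as the appendix argument for type C but carried out with the linear distinguished factor; the presence of the $\mathbb{Z}_2$-part of the Weyl group together with the half-integrality $|[\mathbf{v}]^{\mf c}_{n+1}| = \tfrac12|[\mathbf{v}]_{n+1}|$ makes the combinatorial bookkeeping delicate. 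Following the reduction strategy of Example \ref{fgenerator}, I would collapse to the smallest relevant partition $\mathbf{v}$, express each composite as an iterated symmetrization operator, and match the resulting rational functions term by term; the constant Cartan term is precisely where the linear --- rather than quadratic --- distinguished factor must be used, so that the computation reproduces the $\Uc$ relation of type B in place of the $\Ub$ relation of type C.
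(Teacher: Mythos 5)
Your proposal is correct and follows essentially the same route as the paper: the paper likewise reduces to the two $\imath$-Serre relations at the distinguished node (checked for $n=1$), observes that all other operators and relations coincide with the type C case under $\varphi$, and redoes the appendix computation with the linear factor $\omega(\pm k,\mp k)=\frac{q-q^{-1}x_k^{\mp 1}}{1-x_k^{\mp 1}}$ in place of the quadratic one.
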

\begin{proof}
We only need to prove the last two relations in Definition \ref{def3.1} for the case $n = 1$.
Assume $n = 1$, fix  a partition $I = (0,i,2d+1-2i,i)$, and  $Q \in (R(T)\otimes \mathbb{C})^{W_{[I]^{\mf b}}}.$
The proof of proposition is almost the same as that of the Proposition \ref{polynomial rep C} except the following difference.
$$\omega(\pm l, \pm k) = \frac{qx_{ l}^{\pm 1} - q^{-1}x_{k}^{\pm 1}}{x_l^{\pm 1} - x_k^{\pm 1}}, \omega(\pm l, \mp k) = \frac{qx_{ l}^{\pm 1} - q^{-1}x_{k}^{\mp 1}}{x_l^{\pm 1} - x_k^{\mp 1}}, \forall l,k \in [1,d],l \neq k$$
$$ \omega(\pm k, \mp k) = \frac{q - q^{-1}x_{k}^{\mp 1}}{1 - x_k^{\mp 1}}, \forall k \in [1,d].$$
Then we have
 \begin{align*}
 {h}_1{h}_2^{-1}(Q\otimes e_i) = &q^{3i - 2d}Q \otimes e_i, \quad {h}_1^{-1}{h}_2(Q\otimes e_i) =  q^{2d-3i}Q \otimes e_i,\\
  {e}_1(Q\otimes e_i) =&\sum\limits_{l = 1}^{1+i}\prod\limits_{s = 1,s\neq l}^i \omega(l,s)(l,i+1)Q \otimes e_{i+1},\\
  {f}_1(Q\otimes e_i)
 =&\sum\limits_{k = i}^d\prod\limits_{t = i,t\neq k}^d\omega(t,k)\omega(-t,k)\omega(-k,k)(k,i)Q \otimes e_{i-1}\\
 &  +\sum\limits_{k = i}^d\prod\limits_{t = i,t\neq k}^d\omega(t,-k)\omega(-t,-k)\omega(k,-k)[r]_k(k,i)Q \otimes e_{i-1} .
 \end{align*}
 Then the remaining calculation is the same as that of Proposition \ref{polynomial rep C}. We shall skip it.
\end{proof}
By the faithful representation and the Proposition \ref{representation prop B}, we obtain the main following  theorem.
\begin{theorem}\label{main theorem B}
The assignment
$${e}_i \mapsto \mathcal{E}_{i},\  {f}_i \mapsto \mathcal{F}_i,\ {h}_i^{\pm 1} \mapsto \mathcal{H}_i^{\pm 1}$$
can be extended to an algebra homomorphism $\Uc \rightarrow K^{\mathbb{C}^{*} \times G_{\mf b}}(\mcal{Z}^{\mf b})_{\mathbb{K}}$.
\end{theorem}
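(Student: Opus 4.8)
The plan is to reproduce, in type B, the argument that yields Theorem \ref{main theorem C}. The two facts it rests on are already in hand: the faithfulness of the convolution action $\rho_{\mf b}$ and the relation-checking carried out in Proposition \ref{representation prop B}. First I would invoke the faithfulness of $\rho_{\mf b}$, the type B analogue of Lemma \ref{faithful lemma}, which follows from \cite[claim 7.6.7]{CG} verbatim once one notes that the second projection $\mcal M^{\mf b}\to\mcal N$ is proper. This realizes the convolution algebra as a subalgebra of operators,
$$\rho_{\mf b}:K^{\mathbb{C}^{*}\times G_{\mf b}}(\mcal{Z}^{\mf b})_{\mathbb{K}}\hookrightarrow End\bigl(K^{\mathbb{C}^{*}\times G_{\mf b}}(\mcal{M}^{\mf b})_{\mathbb{K}}\bigr).$$

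Next I would record that $\mathcal{E}_i,\mathcal{F}_i$ and $\mathcal{H}_i^{\pm 1}$ are bona fide elements of $K^{\mathbb{C}^{*}\times G_{\mf b}}(\mcal{Z}^{\mf b})_{\mathbb{K}}$ --- by construction they are $\mbb K$-combinations of classes of explicit sheaves supported on conormal bundles of $G_{\mf b}$-orbits --- and that their images under $\rho_{\mf b}$ are exactly the operators displayed before Proposition \ref{representation prop B}. In particular each $\mathcal{H}_i^{\pm 1}$ acts by multiplication by the invertible scalar $q^{\pm(v_i-\delta_{n+1,i})}$, so $\mathcal{H}_i$ is a unit in the convolution algebra with inverse $\mathcal{H}_i^{-1}$; this accounts for the relation $h_ah_a^{-1}=1$.

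The conclusion is then formal. By Proposition \ref{representation prop B} the operators $\rho_{\mf b}(\mathcal{E}_i),\rho_{\mf b}(\mathcal{F}_i),\rho_{\mf b}(\mathcal{H}_i^{\pm 1})$ satisfy every defining relation of $\Uc$ (those of Definition \ref{def3.1}, via the isomorphism $\Uc\simeq\Ub$ of \cite{BKLW14}). Since $\rho_{\mf b}$ is an injective algebra homomorphism, each such operator identity lifts to the same identity among $\mathcal{E}_i,\mathcal{F}_i,\mathcal{H}_i^{\pm 1}$ inside $K^{\mathbb{C}^{*}\times G_{\mf b}}(\mcal{Z}^{\mf b})_{\mathbb{K}}$ itself. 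As $\Uc$ is generated by $e_i,f_i,h_i^{\pm 1}$ subject to precisely these relations, its universal property yields the desired algebra homomorphism $\Uc\to K^{\mathbb{C}^{*}\times G_{\mf b}}(\mcal{Z}^{\mf b})_{\mathbb{K}}$ with $e_i\mapsto\mathcal{E}_i$, $f_i\mapsto\mathcal{F}_i$, $h_i^{\pm 1}\mapsto\mathcal{H}_i^{\pm 1}$.

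I do not expect an obstacle in this last theorem: all of its content is pushed into Proposition \ref{representation prop B} and into the faithfulness of $\rho_{\mf b}$, both already granted. The genuinely delicate point lives upstream in Proposition \ref{representation prop B}, namely the two Serre-type relations at the distinguished node $n$; these differ from type C because the odd-dimensional space $\mbb C^{2d+1}$ produces the linear term $x_{\bar v_n+1}$ in $T^{*}_{p_1}$ and the factor $(1-x_{\bar v_n+1})$ in $\bigwedge T^{*}_{p_1}$, in place of $x_{\bar v_n+1}^2$ and $(1-x_{\bar v_n+1}^2)$. Once that computation is in place, the passage to an algebra homomorphism is immediate, exactly as for Theorem \ref{main theorem C}.
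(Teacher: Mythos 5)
Your proposal is correct and follows the paper's own route exactly: the paper likewise deduces Theorem \ref{main theorem B} in one line from the faithfulness of the convolution action $\rho_{\mf b}$ on $K^{\mathbb{C}^{*}\times G_{\mf b}}(\mcal{M}^{\mf b})_{\mathbb{K}}$ together with Proposition \ref{representation prop B}, pushing all the real work (the Serre-type relations at the node $n$, where the odd-dimensional form produces the linear factor $(1-x_{\bar v_n+1})$ in place of $(1-x_{\bar v_n+1}^2)$) into that proposition. Your additional remarks on the unit $\mathcal{H}_i$ and the universal property only make explicit what the paper leaves implicit.
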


\subsection{Coideal structure of $\Uc$}
\label{coideal B}

Fix a semisimple matrix $s \in G_{\mf b}$ with $2k - 1$ distinct eigenvalues  $\{\lambda_{1}^{\pm 1},\lambda_2^{\pm 1}, \cdots, \lambda_{k-1}^{\pm 1}, 1\}$ with multiple $d_1, d_2, \cdots,~d_{k-1}, ~2d_k+1$ and set $\alpha=(1,s)\in \mathbb{C}^{*}\times G_{\mf b}$.
There are natural isomorphisms
\begin{align*}
&G_{\mf b}^s\cong GL_{d_1}\times GL_{d_2}\times \dots\times GL_{d_{k-1}}\times G_{{\mf b},d_k},\\
&\mcal{Z}^{{\mf b},\alpha} \cong \mcal{Z}_{d_1}^{\mf a}\times \mcal{Z}_{d_2}^{\mf a} \times \dots \times \mcal{Z}_{d_{k-1}}^{\mf a} \times \mcal{Z}^{\mf b}_{d_{k}}.
\end{align*}
Similarly as Section \ref{coideal C}, we define  a morphism $\mathbf{r}_\alpha: \mathbb{C}_\alpha \otimes_{\mathbf{R}(\mathbb{C}^{*}\times G_{\mf b})} K^{\mathbb{C}^{*}\times G_{\mf b}}(\mcal{Z}_{\mf b}) \rightarrow \mathbb{C}_\alpha \otimes_{\mathbf{R}((\mathbb{C}^{*}\times G_{\mf b})^\alpha)} K^{(\mathbb{C}^{*}\times G_{\mf b})^{\alpha}}(\mcal{Z}_{\mf b}^\alpha)$ by
$$c\otimes \mathfrak{F} \mapsto c\otimes \sum(-1)^k \text{Tor}_{\mathcal{O}_{\mcal{M}_{{\mf b},1}^\alpha \times \mcal{M}_{{\mf b},2}^\alpha}}^k(\lambda(\mcal{M}_{{\mf b},1}^\alpha)^{-1}\times \mathcal{O}_{\mcal{M}_{{\mf b},2}^\alpha},j_{!}\mathfrak{F})$$
Similarly as Theorem \ref{thm-coideal}, we have the following theorem.
\begin{theorem}
\label{theorem-coproduct}
There is an  algebra homomorphism
$$\Delta_{\mf b}^{k-1}:\mathbb{C}_\alpha \otimes K^{\mathbb{C}^{*} \times G_{\mf b}}(\mcal{Z}_{{\mf b}}) \rightarrow \mathbb{C}_\alpha \otimes K^{\mathbb{C}^{*}\times GL_{d_1} } (\mcal{Z}_{d_1}^{\mf a})\otimes \dots\otimes K^{\mathbb{C}^{*} \times GL_{d_{k-1}} }(\mcal{Z}_{d_{k-1}}^{\mf a}) \otimes  K^{ \mathbb{C}^{*}\times G_{{\mf b},d_k}}(\mcal{Z}^{\mf b}_{d_{k}}).$$
\end{theorem}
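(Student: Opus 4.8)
The plan is to follow the proof of Theorem \ref{thm-coideal} line by line, substituting the orthogonal data of type B for the symplectic data of type C. Concretely, $\Delta_{\mf b}^{k-1}$ is the composite $\tau \circ \mathbf{r}_\alpha$ of the bivariant fixed-point map $\mathbf{r}_\alpha$ (already defined in this subsection) with the type B analogue of the K\"unneth isomorphism $\tau$, and the whole content is that this composite respects convolution products. I would first recall why the two displayed fixed-point isomorphisms hold: since $s \in G_{\mf b} = O(V)$ preserves the symmetric form, its eigenvalues on $V = \mbb C^{2d+1}$ occur in inverse pairs $\lambda_i^{\pm 1}$, while the eigenvalue $1$ fills an odd-dimensional subspace of dimension $2d_k+1$. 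Decomposing an $\alpha$-fixed isotropic flag along these eigenspaces splits it into flags of type A on each paired block $\lambda_i^{\pm1}$ and a flag of type B on the unit block, which gives $G_{\mf b}^s \cong GL_{d_1}\times\cdots\times GL_{d_{k-1}}\times G_{{\mf b},d_k}$ with $G_{{\mf b},d_k}=O(2d_k+1)$, and correspondingly $\mcal{Z}^{{\mf b},\alpha}\cong \mcal{Z}_{d_1}^{\mf a}\times\cdots\times \mcal{Z}_{d_{k-1}}^{\mf a}\times \mcal{Z}^{\mf b}_{d_k}$.

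The key step is then the bivariant fixed-point theorem \cite[Theorem 5.7]{GV93}, which asserts precisely that $\mathbf{r}_\alpha$ intertwines the convolution product on $K^{\mathbb{C}^{*}\times G_{\mf b}}(\mcal{Z}^{\mf b})$ with the convolution product on the $\alpha$-fixed K-theory, once the K-theoretic Euler class $\lambda(\mcal{M}_{{\mf b},1}^\alpha)$ appearing in the definition of $\mathbf{r}_\alpha$ is inverted after specializing at $\alpha$. This inversion is legitimate because $\alpha$ has no nonzero fixed vectors on the normal directions to $\mcal{M}_{\mf b}^\alpha\hookrightarrow \mcal{M}_{\mf b}$, so $\lambda(\mcal{M}_{{\mf b},1}^\alpha)$ evaluates to a nonzero scalar in $\mbb C_\alpha$; this is exactly the reason for tensoring with $\mbb C_\alpha$ throughout. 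I would then apply the K\"unneth formula, which is multiplicative for convolution on a product of Steinberg-type varieties, to factor the $\alpha$-fixed K-theory as the tensor product displayed in the theorem, identifying the type A factors with $K^{\mathbb{C}^{*}\times GL_{d_i}}(\mcal{Z}_{d_i}^{\mf a})$ and the unit block with $K^{\mathbb{C}^{*}\times G_{{\mf b},d_k}}(\mcal{Z}^{\mf b}_{d_k})$. Composing the multiplicative maps $\mathbf{r}_\alpha$ and $\tau$ yields the algebra homomorphism $\Delta_{\mf b}^{k-1}$.

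The main obstacle I expect is the disconnectedness of $G_{\mf b}=O(V)$, which is absent in the symplectic case of Theorem \ref{thm-coideal} where $Sp(V)$ is connected. The residual factor $G_{{\mf b},d_k}=O(2d_k+1)$ is again a disconnected (odd) orthogonal group rather than a connected reductive group, so one must verify that the hypotheses of \cite[Theorem 5.7]{GV93} and of the K\"unneth isomorphism genuinely hold for the disconnected centralizer and its representation ring. The point to check carefully is that the component group acts compatibly with the convolution product and does not spoil the inversion of the Euler class; since $O(2m+1)\cong SO(2m+1)\times\{\pm 1\}$ splits off its central component, I expect the required statements to reduce to the connected case after tracking the sign character, but this is the one place where the argument diverges from the symplectic template and so is where the verification must be done with care.
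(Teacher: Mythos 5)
Your proposal matches the paper's argument: the paper likewise defines $\Delta_{\mf b}^{k-1}$ as the composite of $\mathbf{r}_\alpha$ with the K\"unneth isomorphism $\tau$ on the fixed-point variety $\mcal{Z}^{{\mf b},\alpha}\cong \mcal{Z}_{d_1}^{\mf a}\times\cdots\times\mcal{Z}_{d_{k-1}}^{\mf a}\times\mcal{Z}^{\mf b}_{d_k}$ and deduces multiplicativity from the bivariant fixed-point theorem \cite[Theorem 5.7]{GV93}, giving no further detail beyond ``similarly as Theorem \ref{thm-coideal}.'' Your additional caution about the disconnectedness of $G_{\mf b}=O(V)$ and of the residual factor $G_{{\mf b},d_k}$ is a genuine point the paper silently passes over, but it does not change the route of the argument.
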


\subsection{Li's conjecture}

In \cite{L18}, Li defines $\sigma$-quiver variety for the Satake diagram without black dot,
which is a fixed point subvariety  of Nakajima quiver variety corresponding to underlying Dynkin diagram $\Gamma$.
 We refer Section 4 in \cite{L18} for detail definitions.
 Let $U(\mathfrak g_{\Gamma}^{\sigma})$ be the envelope algebra of the $\sigma$-fixed subalgebra of Lie algebra $\mathfrak g_{\Gamma}$ associated to $\Gamma$, $\mathfrak Y_{\zeta}(\mathbf w)$ be the Steinberg type $\sigma$-quiver variety,  and $H_{top}(\mathfrak Y_{\zeta}(\mathbf w))$ be the top Borel-Moore homology. Li made the following conjecture.

\begin{conj}~\cite[Conjecture 5.3.4]{L18}
\label{conj}
There is a nontrivial algebra homomorphism $\U(\mathfrak g_{\Gamma}^{\sigma})\rightarrow H_{top}( \mathfrak Y_{\zeta}(\mathbf w))$.
\end{conj}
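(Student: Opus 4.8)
The plan is to deduce Conjecture \ref{conj} from the $K$-theoretic homomorphism of Theorem \ref{main theorem C} by passing to the classical (Borel--Moore homology) limit. The first step is to match the geometry. For the Satake diagram in \eqref{eq1} the underlying graph $\Gamma$ is of type $A_{N-1}$ and $\sigma$ is the diagram flip, so Li's $\sigma$-quiver variety is the $\sigma$-fixed locus of a type $A$ Nakajima quiver variety. Under the standard identification of type $A$ quiver varieties with cotangent bundles of $N$-step partial flag varieties, this fixed locus is exactly our $\mcal M = T^*\mcal F$ of type C, with the $\sigma$-action corresponding to the isotropy condition $V_i = V_{N-i}^{\perp}$ defining $\mcal F_{\mbf v}$; correspondingly the Steinberg type $\sigma$-quiver variety $\mathfrak Y_{\zeta}(\mathbf w)$ is identified with the generalized Steinberg variety $\mcal Z^{\mf c}$. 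I would carry this out component by component, checking that the symplectic pairing induced by $\sigma$ on the $\sigma$-fixed Nakajima data reproduces the skew form on $V = \mbb C^{2d}$.

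Granting this identification, the second step is the degeneration on both sides. On the geometric side, the bivariant Riemann--Roch map from equivariant $K$-theory to (completed) equivariant Borel--Moore homology intertwines the $K$-theoretic convolution product with the homological convolution product of \cite{CG} and sends the fundamental classes $[\mcal Z_A]$ of the conormal bundles to their homological counterparts; restricting to the non-equivariant top degree recovers $H_{\mathrm{top}}(\mathfrak Y_{\zeta}(\mathbf w))$ with its convolution algebra structure. On the algebraic side, the $\imath$-quantum group $\Ub$ carries an integral $\mathbb A$-form whose specialization at $q = 1$ is the enveloping algebra $\U(\mathfrak g^{\theta})$ of the fixed-point Lie subalgebra, which for \eqref{eq1} is precisely $\U(\mathfrak g_{\Gamma}^{\sigma})$. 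Composing the $q = 1$ specialization of Theorem \ref{main theorem C} with the Riemann--Roch map then produces the desired algebra homomorphism $\U(\mathfrak g_{\Gamma}^{\sigma}) \to H_{\mathrm{top}}(\mathfrak Y_{\zeta}(\mathbf w))$.

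For nontriviality it suffices to observe that the Chevalley generators $e_i, f_i$ are sent, through $\mathscr E_i, \mathscr F_i$ and the Riemann--Roch map, to nonzero multiples of the classes $[\mcal Z_{E_{i,i+1}^{\theta}}]$ and $[\mcal Z_{E_{i+1,i}^{\theta}}]$, which are nonzero in $H_{\mathrm{top}}$ since the orbits $\mcal O_{E_{i,i+1}^{\theta}}$ have strictly positive dimension. In the $q = 1$ limit this is exactly the content of the surjectivity statements in Theorem \ref{generator C} together with the explicit action formulas of Propositions \ref{e'sa action1} and \ref{f's action1}.

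The main obstacle is the compatibility of the two degenerations with the algebra structure. One must verify that the $\imath$-Serre relations (the last two relations of Definition \ref{def3.1}), which involve the factors $q + q^{-1}$ and $q h_n h_{n+1}^{-1} + q^{-1} h_n^{-1} h_{n+1}$, degenerate at $q = 1$ to the correct relations in $\U(\mathfrak g^{\theta})$, and that the $K$-theoretic identities obtained from Propositions \ref{e'sa action1} and \ref{f's action1} survive passage to the top-degree (leading-term) part without spurious cancellations introduced by the Riemann--Roch filtration. The special node $i = n$, where the extra factor $(1 - x_{\bar v_n + 1}^2)^{-1}$ appears in Proposition \ref{f's action1}(b), is the most delicate point, and I expect the bulk of the work to consist of tracking these leading terms through the degeneration.
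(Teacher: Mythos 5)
Your proposal is correct and follows essentially the same route as the paper: identify the Steinberg type $\sigma$-quiver variety with the generalized Steinberg variety of type B/C (the paper simply cites \cite[Theorem 6.2.1]{L18} and \cite[Lemma 4.2.10]{L18} for this rather than reworking the quiver-variety identification), specialize Theorem \ref{main theorem C} (and \ref{main theorem B}) at $q=1$, and push down to Borel--Moore homology. The only point worth noting is that your final ``main obstacle'' largely dissolves in the paper's treatment: the bivariant localization and Riemann--Roch theorems \cite[Theorems 5.11.10, 5.11.11]{CG} already furnish a surjective \emph{algebra} homomorphism $K^{\mathbb{C}^*\times G}(\mcal Z^{\mf c})\twoheadrightarrow H_*(\mcal Z^{\mf c,\alpha})$, so composing algebra homomorphisms requires no tracking of leading terms or re-verification of the $\imath$-Serre relations after degeneration.
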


\begin{theorem}
   In the case with the Satake diagram in (1), Conjecture \ref{conj} holds.
\end{theorem}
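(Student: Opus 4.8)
The plan is to deduce the conjecture as the classical ($q\to 1$) limit of Theorem~\ref{main theorem C}, making it a genuine application of the main results. The argument rests on three ingredients: a geometric identification of Li's Steinberg type $\sigma$-quiver variety $\mathfrak Y_{\zeta}(\mathbf w)$ with the type C Steinberg variety $\mcal Z^{\mf c}$; the quantum algebra homomorphism of Theorem~\ref{main theorem C}; and a convolution-compatible degeneration from equivariant K-theory to Borel--Moore homology.

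First I would fix the geometric dictionary. For the Satake diagram in~\eqref{eq1} the underlying Dynkin diagram $\Gamma$ is of type $A$ and $\sigma$ is the diagram flip, so that $\mathfrak g_{\Gamma}^{\sigma}$ coincides with the fixed-point subalgebra $\mathfrak g^{\theta}$ quantized by $\Ub$. Using the correspondence between Nakajima quiver varieties of type $A$ and cotangent bundles $T^{*}\mcal F$ of type $A$ partial flag varieties, the $\sigma$-fixed quiver variety $\mathfrak M(\mathbf v,\mathbf w)^{\sigma}$ is identified, for the dimension data attached to $(\mathbf v,\mathbf w)$, with the cotangent bundle $\mcal M = T^{*}\mcal F$ of Section~\ref{steinberg variety of type c}: the involution $\sigma$ realizing the fold matches the symplectic involution cutting $\mathfrak{sp}_{2d}$ out of $\mathfrak{gl}$. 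Forming fibered products over the nilpotent cone then yields $\mathfrak Y_{\zeta}(\mathbf w) \cong \mcal Z^{\mf c} = \mcal M \times_{\mcal N} \mcal M$, and hence an isomorphism of convolution algebras $H_{\mathrm{top}}(\mathfrak Y_{\zeta}(\mathbf w)) \cong H_{\mathrm{top}}(\mcal Z^{\mf c})$. (By Theorem~\ref{main theorem B} one could run the same scheme through the type B variety $\mcal Z^{\mf b}$ instead.)

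Next I would take the classical limit of Theorem~\ref{main theorem C}. On the algebraic side, specializing $q\to 1$ degenerates $\Ub$ to the universal enveloping algebra $\U(\mathfrak g^{\theta}) = \U(\mathfrak g_{\Gamma}^{\sigma})$, the relations of Definition~\ref{def3.1} passing to the Serre-type relations of $\mathfrak g_{\Gamma}^{\sigma}$. On the geometric side, the standard filtration on $K^{\mathbb{C}^{*}\times Sp_d(\mbb C)}(\mcal Z^{\mf c})$ has associated graded, via the bivariant Riemann--Roch/Chern character map of \cite{CG}, equal to $H_{*}(\mcal Z^{\mf c})$ with its convolution product; this degeneration sends $q\mapsto 1$ and the leading term of a Steinberg component to its fundamental class. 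Applying it to the generators, the images $\mathscr E_i,\mathscr F_i,\mathscr H_i$ descend to convolution classes supported on the components $\mcal Z^{\mf c}_{E^{\theta}_{i,i+1}(\mathbf v,1)}$ and $\mcal Z^{\mf c}_{E^{\theta}_{i+1,i}(\mathbf v,1)}$, so that the homomorphism of Theorem~\ref{main theorem C} induces an algebra homomorphism $\U(\mathfrak g_{\Gamma}^{\sigma}) \to H_{\mathrm{top}}(\mcal Z^{\mf c}) \cong H_{\mathrm{top}}(\mathfrak Y_{\zeta}(\mathbf w))$. It is nontrivial since the images of $e_i,f_i$ are fundamental classes of positive-dimensional irreducible Steinberg components, hence nonzero.

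The step I expect to be the main obstacle is the compatibility of the two degenerations: one must verify that the PBW-type filtration on $\Ub$ whose associated graded is $\U(\mathfrak g_{\Gamma}^{\sigma})$ is carried, under Theorem~\ref{main theorem C}, into the geometric filtration on $K^{\mathbb{C}^{*}\times Sp_d(\mbb C)}(\mcal Z^{\mf c})$ whose associated graded is $H_{*}(\mcal Z^{\mf c})$, so that the quantum homomorphism genuinely descends to a nonzero classical one rather than collapsing in the limit. A secondary technical point is to make the matching of involutions in the quiver-variety picture precise enough to give an honest isomorphism $\mathfrak Y_{\zeta}(\mathbf w) \cong \mcal Z^{\mf c}$, and not merely a finite correspondence, which is what secures the identification of the two top Borel--Moore homology convolution algebras.
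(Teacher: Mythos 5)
Your overall strategy coincides with the paper's: identify $\mathfrak Y_{\zeta}(\mathbf w)$ with the generalized Steinberg variety of type B/C, specialize Theorem \ref{main theorem C} (resp.\ Theorem \ref{main theorem B}) at $q=1$, and pass from equivariant K-theory to Borel--Moore homology. The differences lie in how the two supporting steps are executed. For the geometric identification the paper does not re-derive the folding: it quotes Li's own results, namely \cite[Theorem 6.2.1]{L18} (the cotangent bundle of the isotropic flag variety of type B/C is a special $\sigma$-quiver variety) and \cite[Lemma 4.2.10]{L18} (so that $\mathfrak S_0(\mathbf w)$ is the nilpotent cone of $\mathfrak{o}_{2d+1}$ or $\mathfrak{sp}_{2d}$), which settles $\mathfrak Y_{\zeta}(\mathbf w)\cong \mcal Z^{\mf b}$ or $\mcal Z^{\mf c}$ outright and removes the ``finite correspondence'' worry you raise. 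More importantly, the step you single out as the main obstacle --- compatibility of a PBW-type filtration on $\Ub$ with a geometric filtration whose associated graded is $H_*(\mcal Z^{\mf c})$ --- does not occur in the paper's argument and is not needed. Rather than an associated-graded degeneration, the paper invokes the bivariant localization theorem \cite[Theorem 5.11.10]{CG} and the bivariant Riemann--Roch theorem \cite[Theorem 5.11.11]{CG} at the semisimple element $\alpha=(1,1)$, which yield a surjective \emph{algebra} homomorphism $K^{\mathbb C^*\times G}(\mcal Z^{\mf c})\twoheadrightarrow H_*(\mcal Z^{\mf c,\alpha})=H_*(\mcal Z^{\mf c})$; one then simply composes with the $q=1$ specialization of Theorem \ref{main theorem C}. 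So your outline is correct, but the ``hard'' compatibility you anticipate can be bypassed entirely; if you insist on the filtration formulation you would genuinely have to prove it, which is extra work the direct route avoids. One point on which your write-up is actually more careful than the paper's: the conjecture asks for a map into $H_{\rm top}$, whereas the Riemann--Roch surjection lands in all of $H_*$, so one should observe (as you do) that the generators map to classes supported on irreducible top-dimensional components of $\mcal Z^{\mf c}$, and that the resulting homomorphism is nontrivial.
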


\begin{proof}
 It is proved that cotangent bundle of the isotropic flag variety of type B/C is isomorphic to certain special $\sigma$-quiver variety    \cite[Theorem 6.2.1]{L18}.
 From \cite[Lemma 4.2.10]{L18},  we can know that $\mathfrak S_{0}(\mathbf{w})$ is the nilpotent variety of  $\mathfrak{o}_{2d+1}$ or $\mathfrak{sp}_{2d}$.
 Thus the Steinberg type $\sigma$-quiver variety $\mathfrak Y_{\zeta}(\mathbf w)$ is isomorphic to the generalized Steinberg variety of type B/C.
   Hence,  $\U(\mathfrak g^{\sigma}_{\Gamma}) = \Ub_{q=1} (resp. \Uc_{q = 1})$, where $\Ub_{q = 1}$ (resp. $\Uc_{q = 1}$) is  the $\mathbb{C}^{*}$-algebras obtained by specializing  $\Ub$ (resp. $\Uc$) to $q = 1$.

Fix  semisimple elements $\alpha = (t,s) \in \mathbb{C}^*\times G$ and $ \beta  =(t_1,s_1) \in \mathbb{C}^*\times G_{\mf b}$ , the bivariant localization theorem \cite[Theorem 5.11.10]{CG} and the bivariant Riemann-Roch theorem \cite[Theorem 5.11.11]{CG} imply that there are surjective algebra homomorphisms
$$K^{\mathbb{C}^* \times G}(\mcal Z^{\mf c}) \twoheadrightarrow H_{*}(\mcal{Z}^{\mf c,\alpha}), $$
$$K^{\mathbb{C}^* \times G_{\mf b}}(Z^{\mf b}) \twoheadrightarrow H_{*}(\mcal{Z}^{\mf b,\beta}),$$
where $H_{*}(\mcal{Z}^{\mf c, \alpha})$ and $H_{*}(\mcal{Z}^{\mf b, \beta})$  are the Borel-Moore homology.
For $\alpha = \beta = (1,1)$, by   Theorem \ref{main theorem C} and  Theorem \ref{main theorem B} , we have two algebra homomorphisms
$$\Ub_{q=1} \rightarrow K^{\mathbb{C}^* \times G}(\mcal{Z}^{\mf c})_{q=1} \twoheadrightarrow H_{*}(\mcal{Z}^{\mf c, \alpha}),$$
$$\Uc_{q = 1} \rightarrow K^{\mathbb{C}^* \times G_{\mf b}}(\mcal{Z}^{\mf b})_{q=1} \twoheadrightarrow H_{*}(\mcal{Z}^{\mf b, \alpha}).$$
Thus the theorem follows.
\end{proof}

\appendix
\allowdisplaybreaks
\section{The proof of Proposition \ref{polynomial rep C}}

We only need to prove the last two relations in the definition of $\U^{\mf b}$, the others are straightforward.  it is suffice to prove it for the case $n = 1$.
Fix  a partition $I = (0,i,2d-2i,i)$, and  $Q \in (R(T)\otimes \mathbb{C})^{W_{[I]}}.$ Set
$$\omega(\pm l, \pm k) = \frac{qx_{ l}^{\pm 1} - q^{-1}x_{k}^{\pm 1}}{x_l^{\pm 1} - x_k^{\pm 1}}, \omega(\pm l, \mp k) = \frac{qx_{ l}^{\pm 1} - q^{-1}x_{k}^{\mp 1}}{x_l^{\pm 1} - x_k^{\mp 1}},\  \forall l,k \in [1,d].$$
Then we have
\begin{align*}
k_1(Q\otimes e_i) =&h_1h_2^{-1}(Q\otimes e_i)= q^{3i - 2d}Q \otimes e_i,\\
k_1^{-1}(Q\otimes e_i) = &h_1^{-1}h_2(Q\otimes e_i)= q^{2d-3i}Q \otimes e_i,\\
 e_1(Q\otimes e_i)
 = &\sum\limits_{l = 1}^{1+i} (l,1+i)(\prod\limits_{s = 1}^i\frac{qx_{i+1} - q^{-1}x_s}{x_{1+i} - x_s}Q)\otimes e_{i+1}\\
 =&\sum\limits_{l = 1}^{1+i}\prod\limits_{s = 1,s\neq l}^i \omega(l,s)(l,i+1)Q \otimes e_{i+1},\\
 f_1(Q\otimes e_i)
 =& \sum\limits_{k = i}^d \mathbb{Z}_{2,k}\times (k,i)[(\prod\limits_{t = i+1}^d\frac{qx_t - q^{-1}x_i}{x_t - x_i}\frac{qx_t^{-1} - q^{-1}x_i}{x_t^{-1} - x_i})\frac{qx_i^{-1} - q^{-1}x_i}{x_i^{-1} - x_i}Q]\otimes e_{i-1}\\
 =&\sum\limits_{k = i}^d\prod\limits_{t = i,t\neq k}^d\omega(t,k)\omega(-t,k)\omega(-k,k)(k,i)Q \otimes e_{i-1}\\
 & \quad +\sum\limits_{k = i}^d\prod\limits_{t = i,t\neq k}^d\omega(t,-k)\omega(-t,-k)\omega(k,-k)[1]_k(k,i)Q \otimes e_{i-1} .
 \end{align*}
We now show the penultimate identity in Definition \ref{def3.1}. By a direct calculation, we have
 \begin{align*}
 &e_1^{2}f_{1}(Q\otimes e_i)\\
 = &\sum_{l = 1}^{i}\sum_{k = i+1}^{d} \prod_{n=1}^{i}\prod_{s=1,s\neq l}^i \prod _{t=i+1,t\neq k}^d \omega (i+1,n)\omega (l,s)\omega (t,k) \omega (-t,k) \omega(-k,k)\\
 &\quad\quad\quad \quad\quad \quad \quad\quad\quad\quad\quad \quad \omega(l,k)\omega(-l,k)(l,k)Q\otimes e_{i+1}\\
 &+\sum_{l = 1}^{i}\prod_{n=1}^{i}\prod_{s=1,s\neq l}^i \prod _{t=i+1}^d \omega (i+1,n)\omega (l,s)\omega (t,l) \omega (-t,l) \omega(-l,l)Q \otimes e_{i+1}\\
 &+ \sum_{l = 1}^{i}\sum_{k = i+1}^{d} \prod_{n=1}^{i}\prod_{s=1,s\neq l}^i \prod _{t=i+1,t\neq k}^d \omega (i+1,n)\omega (l,s)\omega (t,-k) \omega (-t,-k) \omega(k,-k)\\
 &\quad\quad\quad \quad\quad \quad \quad\quad\quad\quad\quad \quad \quad \omega(l,-k)\omega(-l,-k)(l,k)[1]_lQ\otimes e_{i+1}\\
  &+\sum_{l = 1}^{i}\prod_{n=1}^{i}\prod_{s=1,s\neq l}^i \prod _{t=i+1}^d \omega (i+1,n)\omega (l,s)\omega (t,-l) \omega (-t,-l) \omega(l,-l)[1]_lQ \otimes e_{i+1}\\
   &+ \sum_{m=1}^i\sum_{l = 1,l\neq m}^{i}\sum_{k = i+2}^{d} \prod_{n=1,n\neq m}^{i+1}\prod_{s=1,s\neq l,m}^{i+1} \prod _{t=i+2,t\neq k}^d \omega (m,n)\omega (l,s)\omega (t,k) \omega (-t,k) \omega(-k,k)\\
 &\quad\quad\quad \quad\quad \quad \quad\quad\quad\quad\quad \quad  \omega(l,k)\omega(-l,k)\omega(m,k)\omega(-m,k)(m,i+1)(k,l)Q\otimes e_{i+1}\\
   &+ \sum_{m=1}^i\sum_{l = 1,l\neq m}^{i} \prod_{n=1,n\neq m}^{i+1}\prod_{s=1,s\neq l,m}^{i+1} \prod _{t=i+2}^d \omega (m,n)\omega (l,s)\omega (t,m) \omega (-t,m) \omega(-m,m)\\
 &\quad\quad\quad \quad\quad \quad \quad\quad\quad\quad\quad \quad  \omega(l,m)\omega(-l,m)(l,i+1)Q\otimes e_{i+1}\\
  &+ \sum_{m=1}^i\sum_{k = i+2}^{d} \prod_{n=1,n\neq m}^{i+1}\prod_{s=1,s\neq m}^{i} \prod _{t=i+1,t\neq k}^d \omega (m,n)\omega (i+1,s)\omega (t,k) \omega (-t,k) \omega(-k,k)\\
 &\quad\quad\quad \quad\quad \quad \quad\quad\quad\quad\quad \quad  \omega(m,k)\omega(-m,k)(k,m)Q\otimes e_{i+1}\\
   &+ \sum_{m=1}^i \prod_{n=1,n\neq m}^{i+1}\prod_{s=1,s\neq m}^{i} \prod _{t=i+1}^d \omega (m,n)\omega (i+1,s)\omega (t,m) \omega (-t,m) \omega(-m,m)Q\otimes e_{i+1}\\
     &+ \sum_{m=1}^i\sum_{l = 1,l \neq m}^{i} \prod_{n=1,n\neq m}^{i+1}\prod_{s=1,s\neq l,m}^{i+1} \prod _{t=i+2}^d \omega (m,n)\omega (l,s)\omega (t,l) \omega (-t,l) \omega(-l,l)\\
 &\quad\quad\quad \quad\quad \quad \quad\quad\quad\quad\quad \quad  \omega(m,l)\omega(-m,l)(i+1,m)Q\otimes e_{i+1}\\
  &+ \sum_{m=1}^i \prod_{n=1,n\neq m}^{i+1}\prod_{s=1,s\neq m}^{i} \prod _{t=i+2}^d \omega (m,n)\omega (i+1,s)\omega (t,i+1) \omega (-t,i+1) \omega(-(i+1),i+1)\\
 &\quad\quad\quad \quad\quad \quad \quad\quad\quad\quad\quad \quad \omega(m,i+1)\omega(-m,i+1) (m,i+1)Q\otimes e_{i+1}\\
   &+ \sum_{m=1}^i\sum_{l = 1,l\neq m}^{i}\sum_{k = i+2}^{d} \prod_{n=1,n\neq m}^{i+1}\prod_{s=1,s\neq l,m}^{i+1} \prod _{t=i+2,t\neq k}^d \omega (m,n)\omega (l,s)\omega (t,-k) \omega (-t,-k) \omega(k,-k)\\
 &\quad\quad\quad \quad\quad \quad \quad\quad\quad  \omega(l,-k)\omega(-l,-k)\omega(m,-k)\omega(-m,-k)(m,i+1)(k,l)[1]_{l}Q\otimes e_{i+1}\\
   &+ \sum_{m=1}^i\sum_{l = 1,l\neq m}^{i} \prod_{n=1,n\neq m}^{i+1}\prod_{s=1,s\neq l,m}^{i+1} \prod _{t=i+2}^d \omega (m,n)\omega (l,s)\omega (t,-m) \omega (-t,-m) \omega(m,-m)\\
 &\quad\quad\quad \quad\quad \quad \quad\quad\quad\quad\quad \quad  \omega(l,-m)\omega(-l,-m)(l,i+1)[1]_mQ\otimes e_{i+1}\\
   &+ \sum_{m=1}^i\sum_{k = i+2}^{d} \prod_{n=1,n\neq m}^{i+1}\prod_{s=1,s\neq m}^{i} \prod _{t=i+1,t\neq k}^d \omega (m,n)\omega (i+1,s)\omega (t,-k) \omega (-t,-k) \omega(k,-k)\\
 &\quad\quad\quad \quad\quad \quad \quad\quad\quad\quad\quad \quad  \omega(m,-k)\omega(-m,-k)(k,m)[1]_mQ\otimes e_{i+1}\\
  &+ \sum_{m=1}^i \prod_{n=1,n\neq m}^{i+1}\prod_{s=1,s\neq m}^{i} \prod _{t=i+1}^d \omega (m,n)\omega (i+1,s)\omega (t,-m) \omega (-t,-m) \omega(m,-m)[1]_mQ\otimes e_{i+1}\\
       &+ \sum_{m=1}^i\sum_{l = 1,l \neq m}^{i} \prod_{n=1,n\neq m}^{i+1}\prod_{s=1,s\neq l,m}^{i+1} \prod _{t=i+2}^d \omega (m,n)\omega (l,s)\omega (t,-l) \omega (-t,-l) \omega(l,-l)\\
 &\quad\quad\quad \quad\quad \quad \quad\quad\quad\quad\quad \quad  \omega(m,-l)\omega(-m,-l)(i+1,m)[1]_lQ\otimes e_{i+1}\\
  &+ \sum_{m=1}^i \prod_{n=1,n\neq m}^{i+1}\prod_{s=1,s\neq m}^{i} \prod _{t=i+2}^d \omega (m,n)\omega (i+1,s)\omega (t,-(i+1)) \omega (-t,-(i+1)) \omega(i+1,-(i+1))\\
 &\quad\quad\quad \quad\quad \quad \quad\quad\quad\quad\quad \quad \omega(m,-(i+1))\omega(-m,-(i+1)) (m,i+1)[1]_mQ\otimes e_{i+1}
 \end{align*}
Then we have
 \begin{align*}
 &\frac{e_1^{2}f_{1}}{q+q^{-1}}(Q\otimes e_i)\\
 = &\sum_{l = 1}^{i}\sum_{k = i+1}^{d} \prod_{n=1,n\neq l}^{i}\prod_{s=1,s\neq l}^i \prod _{t=i+1,t\neq k}^d \omega (i+1,n)\omega (l,s)\omega (t,k) \omega (-t,k) \omega(-k,k)\\
 &\quad\quad\quad \quad\quad \quad \quad\quad\quad\quad\quad \quad \omega(l,k)\omega(-l,k)(l,k)Q\otimes e_{i+1}\\
 &+\sum_{l = 1}^{i}\prod_{n=1,n\neq l}^{i}\prod_{s=1,s\neq l}^i \prod _{t=i+1}^d \omega (i+1,n)\omega (l,s)\omega (t,l) \omega (-t,l) \omega(-l,l)Q \otimes e_{i+1}\\
 &+ \sum_{l = 1}^{i}\sum_{k = i+1}^{d} \prod_{n=1,n \neq l}^{i}\prod_{s=1,s\neq l}^i \prod _{t=i+1,t\neq k}^d \omega (i+1,n)\omega (l,s)\omega (t,-k) \omega (-t,-k) \omega(k,-k)\\
 &\quad\quad\quad \quad\quad \quad \quad\quad\quad\quad\quad \quad \quad \omega(l,-k)\omega(-l,-k)(l,k)[1]_lQ\otimes e_{i+1}\\
  &+\sum_{l = 1}^{i}\prod_{n=1,n \neq l}^{i}\prod_{s=1,s\neq l}^i \prod _{t=i+1}^d \omega (i+1,n)\omega (l,s)\omega (t,-l) \omega (-t,-l) \omega(l,-l)[1]_lQ \otimes e_{i+1}\\
   &+ \sum_{m=1}^i\sum_{l = 1,l< m}^{i}\sum_{k = i+2}^{d} \prod_{n=1,n\neq l,m}^{i+1}\prod_{s=1,s\neq l,m}^{i+1} \prod _{t=i+2,t\neq k}^d \omega (m,n)\omega (l,s)\omega (t,k) \omega (-t,k) \omega(-k,k)\\
 &\quad\quad\quad \quad\quad \quad \quad\quad\quad\quad\quad \quad  \omega(l,k)\omega(-l,k)\omega(m,k)\omega(-m,k)(m,i+1)(k,l)Q\otimes e_{i+1}\\
   &+ \sum_{m=1}^i\sum_{l = 1,l\neq m}^{i} \prod_{n=1,n\neq l,m}^{i+1}\prod_{s=1,s\neq l,m}^{i+1} \prod _{t=i+2}^d \omega (m,n)\omega (l,s)\omega (t,m) \omega (-t,m) \omega(-m,m)\\
 &\quad\quad\quad \quad\quad \quad \quad\quad\quad\quad\quad \quad  \omega(l,m)\omega(-l,m)(l,i+1)Q\otimes e_{i+1}\\
  &+ \sum_{m=1}^i\sum_{l = 1,l< m}^{i}\sum_{k = i+2}^{d} \prod_{n=1,n\neq l,m}^{i+1}\prod_{s=1,s\neq l,m}^{i+1} \prod _{t=i+2,t\neq k}^d \omega (m,n)\omega (l,s)\omega (t,-k) \omega (-t,-k) \omega(k,-k)\\
 &\quad\quad\quad \quad\quad \quad \quad\quad\quad  \omega(l,-k)\omega(-l,-k)\omega(m,-k)\omega(-m,-k)(m,i+1)(k,l)[1]_{l}Q\otimes e_{i+1}\\
  &+ \sum_{m=1}^i\sum_{l = 1,l\neq m}^{i} \prod_{n=1,n\neq l,m}^{i+1}\prod_{s=1,s\neq l,m}^{i+1} \prod _{t=i+2}^d \omega (m,n)\omega (l,s)\omega (t,-m) \omega (-t,-m) \omega(m,-m)\\
 &\quad\quad\quad \quad\quad \quad \quad\quad\quad\quad\quad \quad  \omega(l,-m)\omega(-l,-m)(l,i+1)[1]_mQ\otimes e_{i+1}.
 \end{align*}

Similarly, we have
 \begin{align*}
 &\frac{f_1e_1^{2}}{q+q^{-1}}(Q\otimes e_i)\\
 =&\sum_{k = i+2}^d\sum_{m = 1}^i \sum_{l=1,l >m}^{i+1}\prod_{t = i+2,t \neq k}^d\prod_{n =1, n \neq l,m}^{i+1}\prod_{s =1, s \neq l,m}^{i+1} \omega(t,k)\omega(-t,k)\omega(-k,k)\omega(m,n)\omega(l,s)\\
 &\quad\quad\quad \quad\quad \quad \quad\quad\quad\quad\quad \quad\quad\quad\quad \quad \omega(m,k)\omega(l,k)(l,i+1)(k,m)Q \otimes e_{i+1}\\
 &+\sum_{k = i+2}^d\sum_{m = 1}^{i+1} \prod_{t = i+2,t \neq k}^d\prod_{n =1, n \neq m}^{i+1}\prod_{s =1, s \neq m}^{i+1} \omega(t,k)\omega(-t,k)\omega(-k,k)\omega(m,n)\omega(k,s)\\
 &\quad\quad\quad \quad\quad \quad \quad\quad\quad\quad\quad \quad\quad\quad\quad \quad (m,i+1)Q \otimes e_{i+1}\\
 &+\sum_{k = i+2}^d\sum_{m = 1}^i \sum_{l=1,l >m}^{i+1}\prod_{t = i+2,t \neq k}^d\prod_{n =1, n \neq l,m}^{i+1}\prod_{s =1, s \neq l,m}^{i+1} \omega(t,-k)\omega(-t,-k)\omega(k,-k)\omega(m,n)\omega(l,s)\\
 &\quad\quad\quad \quad\quad \quad \quad\quad\quad\quad\quad \quad\quad\quad\quad \quad \omega(m,-k)\omega(l,-k)(k,m)(l,i+1)[1]_mQ \otimes e_{i+1}\\
  &+\sum_{k = i+2}^d\sum_{m = 1}^{i+1} \prod_{t = i+2,t \neq k}^d\prod_{n =1, n \neq m}^{i+1}\prod_{s =1, s \neq m}^{i+1} \omega(t,-k)\omega(-t,-k)\omega(k,-k)\omega(m,n)\omega(-k,s)\\
 &\quad\quad\quad \quad\quad \quad \quad\quad\quad\quad\quad \quad\quad\quad\quad \quad (m,i+1)Q \otimes e_{i+1}.
 \end{align*}
Similarly,
 \begin{align*}
&e_1f_1e_1(Q\otimes e_{i})\\
=&\sum_{l=1}^i \prod_{n=1}^i\prod_{t=i+2}^d\prod_{s=1,s \neq l}^i \omega(i+1,n)\omega(t,i+1) \omega(-t,i+1)\omega(-(i+1),i+1)\omega(l,s)\\
&\quad\quad\quad \quad\quad \quad \quad\quad\quad\quad\quad \quad\quad\quad\quad \quad  \omega(l,i+1)(l,i+1)Q \otimes e_{i+1}\\
& + \sum_{k= i+2}^d\sum_{l=1}^i \prod_{n=1}^i\prod_{t=i+1, t \neq k}^d\prod_{s=1,s \neq l}^i \omega(i+1,n)\omega(t,k) \omega(-t,k)\omega(-k,k)\omega(l,s)\\
&\quad\quad\quad \quad\quad \quad \quad\quad\quad\quad\quad \quad\quad\quad\quad \quad  \omega(l,k)(l,k)Q \otimes e_{i+1}\\
&+\sum_{k=i+1}^d \prod_{n=1}^i\prod_{t=i+1,t \neq k}^d\prod_{s=1,s \neq l}^i \omega(i+1,n)\omega(t,k) \omega(-t,k)\omega(-k,k)\omega(k,s)Q \otimes e_{i+1}\\
& + \sum_{m=1}^i\sum_{k= i+2}^d\sum_{l=1,l \neq m}^i \prod_{n=1, n \neq m}^{i+1}\prod_{t=i+2, t \neq k}^d\prod_{s=1,s \neq l,m}^i \omega(m,n)\omega(t,k) \omega(-t,k)\omega(-k,k)\omega(l,s)\\
&\quad\quad\quad \quad\quad \quad \quad\quad\quad\quad \omega(l,k)\omega(m,k)\omega(-m,k)\omega(l,i+1)(m,i+1)(l,k)Q \otimes e_{i+1}\\
& + \sum_{m=1}^i\sum_{k= i+2}^d \prod_{n=1, n \neq m}^{i+1}\prod_{t=i+2, t \neq k}^d\prod_{s=1,s \neq m}^i \omega(m,n)\omega(t,k) \omega(-t,k)\omega(-k,k)\omega(i+1,s)\\
&\quad\quad\quad \quad\quad \quad \quad\quad\quad\quad \omega(i+1,k)\omega(m,k)\omega(-m,k)(m,k)Q \otimes e_{i+1}\\
& + \sum_{m=1}^i\sum_{l= 1,l \neq m}^i \prod_{n=1, n \neq m}^{i+1}\prod_{t=i+2}^d\prod_{s=1,s \neq l,m}^i \omega(m,n)\omega(t,m) \omega(-t,m)\omega(-m,m)\omega(l,s)\\
&\quad\quad\quad \quad\quad \quad \quad\quad\quad\quad \omega(l,m)\omega(l,i+1)(l,i+1)Q \otimes e_{i+1}\\
& + \sum_{m=1}^i\prod_{n=1, n \neq m}^{i+1}\prod_{t=i+2}^d\prod_{s=1,s \neq m}^i \omega(m,n)\omega(t,m) \omega(-t,m)\omega(-m,m)\omega(i+1,s)\\
&\quad\quad\quad \quad\quad \quad \quad\quad\quad\quad \omega(i+1,m)Q \otimes e_{i+1}\\
& + \sum_{m=1}^i\sum_{k= i+2}^d \prod_{n=1, n \neq m}^{i+1}\prod_{t=i+2,t \neq k}^d\prod_{s=1,s \neq m}^i \omega(m,n)\omega(t,k) \omega(-t,k)\omega(-k,k)\omega(k,s)\\
&\quad\quad\quad \quad\quad \quad \quad\quad\quad\quad \omega(m,k)\omega(-m,k)\omega(k,i+1)(m,i+1)Q \otimes e_{i+1}\\
& + \sum_{m=1}^i\prod_{n=1, n \neq m}^{i+1}\prod_{t=i+2}^d\prod_{s=1,s \neq m}^i \omega(m,n)\omega(t,m) \omega(-t,m)\omega(-m,m)\omega(m,s)\\
&\quad\quad\quad \quad\quad \quad \quad\quad\quad\quad \omega(m,i+1)(m,i+1)Q \otimes e_{i+1}\\
& + \sum_{k = i+1}^d \sum_{l= 1}^i \prod_{n=1}^{i}\prod_{t=i+1,t \neq k}^d\prod_{s=1,s \neq l}^i \omega(i+1,n)\omega(t,-k) \omega(-t,-k)\omega(k,-k)\omega(l,s)\\
&\quad\quad\quad \quad\quad \quad \quad\quad\quad\quad \omega(l,-k)(l,k)[1]_lQ \otimes e_{i+1}\\
 & + \sum_{k = i+1}^d \prod_{n=1}^{i}\prod_{t=i+1,t \neq k}^d\prod_{s=1}^i \omega(i+1,n)\omega(t,-k) \omega(-t,-k)\omega(k,-k)\omega(-k,s)Q \otimes e_{i+1}\\
 & + \sum_{m = 1}^i \sum_{k = i+2}^d\sum_{l= 1, l \neq m}^i \prod_{n=1, n \neq m}^{i+1}\prod_{t=i+2,t \neq k}^d\prod_{s=1,s \neq l，m}^i \omega(m,n)\omega(t,-k) \omega(-t,-k)\omega(k,-k)\omega(l,s)\\
&\quad\quad\quad \quad\quad \quad \quad\quad \omega(l,-k)\omega(m,-k)\omega(-m,-k)\omega(l,i+1)(m,i+1)(k,l)[1]_lQ \otimes e_{i+1}\\
 & + \sum_{m = 1}^i \sum_{k = i+2}^d \prod_{n=1, n \neq m}^{i+1}\prod_{t=i+2,t \neq k}^d\prod_{s=1,s \neq m}^i \omega(m,n)\omega(t,-k) \omega(-t,-k)\omega(k,-k)\omega(i+1,s)\\
&\quad\quad\quad \quad\quad \quad \quad\quad \omega(i+1,-k)\omega(m,-k)\omega(-m,-k)(k,m)[1]_mQ \otimes e_{i+1}\\
& + \sum_{m = 1}^i \sum_{l= 1, l \neq m}^i \prod_{n=1, n \neq m}^{i+1}\prod_{t=i+2}^d\prod_{s=1,s \neq l，m}^i \omega(m,n)\omega(t,-m) \omega(-t,-m)\omega(m,-m)\omega(l,s)\\
&\quad\quad\quad \quad\quad \quad \quad\quad \omega(l,-m)\omega(l,i+1)(i+1,l)[1]_mQ \otimes e_{i+1}\\
& + \sum_{m = 1}^i  \prod_{n=1, n \neq m}^{i+1}\prod_{t=i+2}^d\prod_{s=1,s \neq m}^i \omega(m,n)\omega(t,-m) \omega(-t,-m)\omega(m,-m)\omega(i+1,s)\\
&\quad\quad\quad \quad\quad \quad \quad\quad \omega(i+1,-m)[1]_mQ \otimes e_{i+1}\\
& + \sum_{m = 1}^i  \sum_{k = i+2}^d \prod_{n=1, n \neq m}^{i+1}\prod_{t=i+2,t \neq k}^d\prod_{s=1,s \neq m}^i \omega(m,n)\omega(t,-k) \omega(-t,-k)\omega(k,-k)\omega(-k,s)\\
&\quad\quad\quad \quad\quad \quad \quad\quad \omega(m,-k)\omega(-m,-k)\omega(-k,i+1)(m,i+1)Q \otimes e_{i+1}\\
& + \sum_{m = 1}^i   \prod_{n=1, n \neq m}^{i+1}\prod_{t=i+2}^d\prod_{s=1,s \neq m}^i \omega(m,n)\omega(t,-m) \omega(-t,-m)\omega(m,-m)\omega(-m,s)\\
&\quad\quad\quad \quad\quad \quad \quad\quad \omega(-m,i+1)(m,i+1)Q \otimes e_{i+1}\\
 \end{align*}
Therefore, we have
 \begin{align*}
&\frac{e_1^2f_1 + f_1e_1^2}{q+q^{-1}} -e_1f_1e_1\\
=&A \prod_{n=1}^i\omega(i+1,n)Q\otimes e_{i+1} + \sum_{l=1}^iB_l \prod_{s=1,s\neq l}^{i+1}\omega(l,s)(l,i+1)Q \otimes e_{i+1} \\
&+ \sum_{l=1}^i\sum_{k=i+2}^d C_{kl}(k,l)Q\otimes e_{i+1} + \sum_{m = 1}^i\sum_{l = 1,l < m}^i\sum_{k=i+2}^dD_{klm}(m,i+1)(k,l)Q\otimes e_{i+1} \\
&+ \sum_{l=1}^i\sum_{k=i+2}^d E_{kl}(k,l)[1]_lQ \otimes e_{i+1} +  \sum_{m = 1}^i\sum_{l = 1,l < m}^i\sum_{k=i+2}^dF_{klm}(m,i+1)(k,l)Q\otimes e_{i+1}.
 \end{align*}
 Here
 \begin{align*}
A=&\sum_{l=1}^i\prod_{s=1,s\neq l}^i \prod_{t=i+2}^d\omega(l,s)\omega(t,l)\omega(-t,l)\omega(-l,l)\omega(-(i+1),l)\\
& +  \prod_{t=i+3}^d\prod_{s=1}^i\omega(t,i+2)\omega(-t,i+2)\omega(-(i+2),i+2)\omega(i+2,s)\\
& + \sum_{k=i+3}^d\prod_{t=i+2,t \neq k}^d \prod_{s=1}^i\omega(t,k)\omega(-t,k)\omega(-k,k)\omega(k,s)\\
& +  \prod_{t=i+3}^d\prod_{s=1}^i\omega(t,-(i+2))\omega(-t,-(i+2))\omega(i+2,-(i+2))\omega(-(i+2),s)\\
& + \sum_{k=i+3}^d\prod_{t=i+2,t \neq k}^d \prod_{s=1}^i\omega(t,-k)\omega(-t,-k)\omega(k,-k)\omega(-k,s)\\
& - \sum_{k=i+1}^d\prod_{t=i+1,t \neq k}^d \prod_{s=1}^i\omega(t,k)\omega(-t,k)\omega(-k,k)\omega(k,s)\\
& - \sum_{m=1}^i\prod_{t=i+2}^d \prod_{s=1,s\neq m}^{i+1}\omega(m,s)\omega(t,m)\omega(-t,m)\omega(-m,m)\\
& - \sum_{k=i+1}^d\prod_{t=i+1,t \neq k}^d \prod_{s=1}^i\omega(t,-k)\omega(-t,-k)\omega(k,-k)\omega(-k,s)
 \end{align*}
  \begin{align*}
B_l =& \prod_{n=1,n \neq l}^i\prod_{t=i+2}^d\omega(i+1,n)\omega(t,i+1)\omega(-t,i+1)\omega(-(i+1),i+1)\omega(-l,i+1)\\
&+\sum_{m=1,m \neq l}^i\prod_{n=1,n \neq l,m}^{i+1}\prod_{t=i+2}^d\omega(m,n)\omega(t,m)\omega(-t,m)\omega(-m,m)\omega(-l,m)\\
&+ \prod_{t= i+3}^d\prod_{n=1,n\neq l}^{i+1}\omega(t,i+2) \omega(-t,i+2)\omega(-(i+2),i+2)\omega(i+2,n)\\
&+\sum_{k = i+3}^d\prod_{t=i+2,t\neq k}^d\prod_{n=1, n \neq l}\omega(t,k)\omega(-t,k)\omega(-k,k)\omega(k,n)\\
&+\prod_{t=i+3}^d\prod_{n=1, n \neq l}\omega(t,-(i+2))\omega(-t,-(i+2))\omega(i+2,-(i+2))\omega(-(i+2),n)\\
&+\sum_{k=i+3}^d\prod_{t=i+2,t \neq k}^{d}\prod_{n=1,n \neq l}^{i+1}\omega(t,-k)\omega(-t,-k)\omega(k,-k)\omega(-k,n)\\
& - \prod_{n=1}^i\prod_{t=i+2}^d\omega(i+1,n)\omega(t,i+1)\omega(-t,i+1)\omega(-(i+1),i+1)\\
&- \sum_{m=1,m \neq l}^i\prod_{n=1,n \neq m}^{i+1}\prod_{t=i+2}^d\omega(m,n)\omega(t,m)\omega(-t,m)\omega(-m,m)\\
&- \sum_{k=i+2}^d\prod_{t=i+2,t \neq k}^{d}\prod_{n=1,n \neq l}^{i+1}\omega(t,k)\omega(-t,k)\omega(-k,k)\omega(-k,n)\omega(l,k)\omega(-l,k)\\
& - \prod_{t=i+2}^{d}\prod_{n=1,n \neq l}^{i+1}\omega(t,l)\omega(-t,l)\omega(-l,l)\omega(l,n)\\
& - \sum_{k=i+2}^d\prod_{t=i+2,t \neq k}^{d}\prod_{n=1,n \neq l}^{i+1}\omega(t,-k)\omega(-t,-k)\omega(k,-k)\omega(-k,n)\omega(l,-k)\omega(-l,-k)\\
& - \prod_{t=i+2}^{d}\prod_{n=1,n \neq l}^{i+1}\omega(t,-l)\omega(-t,-l)\omega(l,-l)\omega(-l,n)\\
 \end{align*}
 \begin{align*}
 C_{kl} = &\prod_{n = 1, n \neq l}^i\prod_{s=1,s \neq l}^i \prod_{t=i+1,t \neq k}^d\omega(i+1,n)\omega(l,s)\omega(t,k)\omega(-t,k)\omega(-k,k)\omega(l,k)\omega(-l,k)\\
 &+\prod_{n = 1, n \neq l}^i\prod_{s=1,s \neq l}^i \prod_{t=i+2,t \neq k}^d\omega(t,k)\omega(-t,k)\omega(-k,k)\omega(l,n)\omega(i+1,s)\omega(l,k)\omega(i+1,k)\\
  &-\prod_{n = 1}^i\prod_{s=1,s \neq l}^i \prod_{t=i+1,t \neq k}^d\omega(i+1,n)\omega(t,k)\omega(-t,k)\omega(-k,k)\omega(l,s)\omega(l,k)\\
 &- \prod_{n = 1, n \neq l}^{i+1}\prod_{s=1,s \neq l}^i \prod_{t=i+2,t \neq k}^d\omega(l,n)\omega(t,k)\omega(-t,k)\omega(-k,k)\omega(i+1,s)\omega(i+1,k)\omega(l,k)\omega(-l,k)
 \end{align*}
    \begin{align*}
 D_{klm} = &\prod_{n = 1, n \neq l,m}^{i+1}\prod_{s=1,s \neq l,m}^{i+1} \prod_{t=i+2,t \neq k}^d\omega(m,n)\omega(l,s)\omega(t,k)\omega(-t,k)\omega(-k,k)\omega(l,k)\omega(-l,k)\\
 & \quad\quad\quad \quad\quad \quad \quad\quad\quad\quad\quad \quad\quad \quad \quad\quad\omega(m,k)\omega(-m,k)\\
 &+\prod_{n = 1, n \neq l,m}^{i+1}\prod_{s=1,s \neq l,m}^{i+1} \prod_{t=i+2,t \neq k}^d\omega(t,k)\omega(-t,k)\omega(-k,k)\omega(l,n)\omega(m,s)\omega(l,k)\omega(m,k)\\
 &- \prod_{n = 1, n \neq m}^{i+1}\prod_{s=1,s \neq l,m}^{i} \prod_{t=i+2,t \neq k}^d\omega(m,n)\omega(t,k)\omega(-t,k)\omega(-k,k)\omega(l,s)\omega(l,k)\omega(m,k)\\
 & \quad\quad\quad \quad\quad \quad \quad\quad\quad\quad\quad \quad\quad \quad \quad\quad\omega(-m,k)\omega(l,i+1)\\
 &- \prod_{n = 1, n \neq l}^{i+1}\prod_{s=1,s \neq l,m}^{i} \prod_{t=i+2,t \neq k}^d\omega(l,n)\omega(t,k)\omega(-t,k)\omega(-k,k)\omega(m,s)\omega(m,k)\omega(l,k)\\
 & \quad\quad\quad \quad\quad \quad \quad\quad\quad\quad\quad \quad\quad \quad \quad\quad\omega(-l,k)\omega(m,i+1)\\
 \end{align*}
    \begin{align*}
 E_{kl} = &\prod_{n = 1, n \neq l}^i\prod_{s=1,s \neq l}^i \prod_{t=i+1,t \neq k}^d\omega(i+1,n)\omega(l,s)\omega(t,-k)\omega(-t,-k)\omega(k,-k)\omega(l,-k)\omega(-l,-k)\\
 &+\prod_{n = 1, n \neq l}^i\prod_{s=1,s \neq l}^i \prod_{t=i+2,t \neq k}^d\omega(t,-k)\omega(-t,-k)\omega(k,-k)\omega(l,n)\omega(i+1,s)\\
 &\quad\quad\quad \quad\quad \quad \quad\quad\quad\quad\quad \quad\quad \quad \quad\quad\omega(l,-k)\omega(i+1,-k)\\
   &-\prod_{n = 1}^i\prod_{s=1,s \neq l}^i \prod_{t=i+1,t \neq k}^d\omega(i+1,n)\omega(t,-k)\omega(-t,-k)\omega(k,-k)\omega(l,s)\omega(l,-k)\\
 &- \prod_{n = 1, n \neq l}^{i+1}\prod_{s=1,s \neq l}^i \prod_{t=i+2,t \neq k}^d\omega(l,n)\omega(t,-k)\omega(-t,-k)\omega(k,-k)\omega(i+1,s)\\
 &\quad\quad\quad \quad\quad \quad \quad\quad\quad\quad\quad \quad\quad \quad \quad\quad\omega(i+1,-k)\omega(l,-k)\omega(-l,-k)
 \end{align*}
     \begin{align*}
 F_{klm} = &\prod_{n = 1, n \neq l,m}^{i+1}\prod_{s=1,s \neq l,m}^{i+1} \prod_{t=i+2,t \neq k}^d\omega(m,n)\omega(l,s)\omega(t,-k)\omega(-t,-k)\omega(k,-k)\omega(l,-k)\\
 & \quad\quad\quad \quad\quad \quad \quad\quad\quad\quad\quad \quad\quad \quad \quad\quad \omega(-l,-k)\omega(m,-k)\omega(-m,-k)\\
 &+\prod_{n = 1, n \neq l,m}^{i+1}\prod_{s=1,s \neq l,m}^{i+1} \prod_{t=i+2,t \neq k}^d\omega(t,-k)\omega(-t,-k)\omega(k,-k)\omega(l,n)\omega(m,s)\omega(l,-k)\omega(m,-k)\\
 &- \prod_{n = 1, n \neq m}^{i+1}\prod_{s=1,s \neq l,m}^{i} \prod_{t=i+2,t \neq k}^d\omega(m,n)\omega(t,-k)\omega(-t,-k)\omega(k,-k)\omega(l,s)\omega(l,-k)\omega(m,-k)\\
 & \quad\quad\quad \quad\quad \quad \quad\quad\quad\quad\quad \quad\quad \quad \quad\quad\omega(-m,-k)\omega(l,i+1)\\
 &- \prod_{n = 1, n \neq l}^{i+1}\prod_{s=1,s \neq l,m}^{i} \prod_{t=i+2,t \neq k}^d\omega(l,n)\omega(t,-k)\omega(-t,-k)\omega(k,-k)\omega(m,s)\omega(m,-k)\omega(l,-k)\\
 & \quad\quad\quad \quad\quad \quad \quad\quad\quad\quad\quad \quad\quad \quad \quad\quad\omega(-l,-k)\omega(m,i+1)\\
 \end{align*}
 By a direct calculation,  we have
 $$A = B_l=-(q^{2d-3i-1} + q^{3i+1-2d}),\ C_{kl}=D_{klm} = E_{kl}=F_{klm}=0.$$
 That is,
 $$e_1^2f_1+f_1e_1^2 = (q+q^{-1})(e_1f_1e_1 - e_1(q^{-1}k_1^{-1}+qk_1)).$$
 Similarly, we can prove the last identity. Proposition follows.


\begin{thebibliography} {9999999}\frenchspacing

\bibitem[Ar62]{Ar62} S.~Araki,
{\em On root systems and an infinitesimal classification of irreducible symmetric spaces},
J.~Math.~Osaka City Univ. {\bf 13} (1962), 1--34.

\bibitem[CG]{CG}N. Chriss, V. Ginzburg, {\em Representation theory and complex geometry}, Birkhauser (1997).



\bibitem[BK19]{BK19} M.~Balagovic and S.~Kolb,
{\em  Universal K-matrix for quantum symmetric pairs},, J. Reine Angew.
Math. (to appear), DOI 10.1515/crelle-2016-0012.


\bibitem[BKLW14]{BKLW14}H. Bao, J. Kujawa, Y. Li, and W. Wang, {\em Geometric Schur duality of classical type}, Transform. group.
Groups , {\bf 23} (2018), 329--389.

\bibitem[BLM90]{BLM90} A. Beilinson, G. Lusztig, and R. McPherson,
          {\em A geometric setting for the quantum deformation of $GL_n$}, Duke Math. J., {\bf 61} (1990), 655--677.

\bibitem[BSWW]{BSWW} H.~Bao, P.~Shan, W.~Wang, and B.~Webster,
{\em Categorification of quantum symmetric pairs I},  Quantum Topol. {\bf 9} (2018), 643--714.

\bibitem[BW18a]{BW18a}
H. Bao and W. Wang,
{\em A new approach to Kazhdan-Lusztig theory of type $B$ via quantum symmetric pairs},
Ast\'erisque {\bf 402}, 2018, vii+134pp.,

\bibitem[BW18b]{BW18b}
H. Bao and W. Wang,
{\em Canonical bases arising from quantum symmetric pairs},
Inventiones Math. {\bf 213} (2018), 1099--1177.

\bibitem[B13]{B13}T. Bridgeland, {\em Quantum groups via Hall algebras of complexes}, Ann. Math. {\bf 177} (2013), 739--759.

\bibitem[DDPW]{DDPW}
 B. Deng, J. Du, B. Parshall, and J. Wang, {\em Finite dimensional algebras and quantum
groups}, Mathematical Surveys and Monographs, {\bf 150}, American Mathematical
Society, Providence, RI, 2008

\bibitem[FL15]{FL15} Z. Fan and Y. Li,{\em Geometric Schur duality of classical type II},
Transactions of the American Mathematical Society,Sers B, {\bf 2},(2015)，
51--92.

\bibitem[FLLLW1]{FLLLW1}Z. Fan,C. Lai,Y. Li, L. Luo, and  W. Wang, {\em Affine flag varieties and quantum symmetric pairs}, Memoirs of the American Mathematical
Society (to appear), arXiv:1602.04383.

\bibitem[FLLLW2]{FLLLW2}Z. Fan,C. Lai,Y. Li, L. Luo, and  W. Wang, {\em Affine Hecke algebras and quantum symmetric pairs}, Memoirs of the American Mathematical
Society (to appear), arXiv:1609.06199.

\bibitem[G87]{G87}V. Ginzburg, {\em Lagrangian construction for representation of Hecke algebra }. Adances In Mathematics. Volume 63, 1987 ,100--112.

\bibitem[GV93]{GV93}V. Ginzburg and E. vassrot, {\em Langlands reciprocity for affine quantum groups of type $A_n$}. International Mathematics Reasearch Notices. 1993, No.3,67--85.

\bibitem[GRV94]{GRV94} V. Ginzburg, N. Reshetihin and E. vassrot, {\em quamtum groups and flag varieties}.Contemporary Mathematics,Volume 175,1994.

\bibitem[KL85]{KL85}D. Kazhdan and  G. Lusztig, {\em Equivariant K-theory and representations of Hecke algebras. II}, Inventiones mathematicae, Volume 80. 1985. 209--231.

\bibitem[KL87]{KL87}D. Kazhdan and  G. Lusztig, {\em proof of the Deligne-Langlands conjjecture for Hecke algebra}, Inventiones mathematicae, Volume 87. 1987. 153--215.

\bibitem[KL09]{KL09} M. Khovanov and A. Lauda, {\em A diagrammatic approach to categorification of quantum groups. I},
Represent. Theory {\bf 13} (2009), 309--347.

\bibitem[KL10]{KL10} M. Khovanov and A. Lauda, {\em A categorification of quantum $\mathfrak{sl}(n)$}, Quantum Topol. {\bf 1} (2010), 1--92.


\bibitem[Ko14]{Ko14} S. Kolb,
{\em Quantum symmetric Kac-Moody pairs}, Adv. in Math. {\bf 267} (2014), 395--469.

\bibitem[LL15]{LL15} C. Lai and L. Luo, {\em An elementary construction of monomial bases of modified quantum affine $\mathfrak{gl}_n$},
Journal of the London Mathematical Society 96, No. 1 (2017): 15--27

\bibitem[LL18]{LL18}  C. Lai and L. Luo, {\em Schur algebras and quantum symmetric pairs with unequal parameters}, International Mathematics Reasearch Notices (to appear).

\bibitem[Lu85]{Lu85} G. Lusztig, {\em Equivariant K-theory and representations of Hecke algebras}, Proceeeedings Of The American Mathematical Society, Volume 94. 1985, No.2. 337--342.
\bibitem[Lu90]{Lu90} G. Lusztig, {\em Canonical bases arising from
quantized enveloping algebras}, Journal of the American Mathematical Society, Volume 3, 1990, 447--498.

\bibitem[Lu91]{Lu91}
G. Lusztig,
{\em Quivers, perverse sheaves, and quantized enveloping algebras},  Journal of the American Mathematical Society, Volume 4, (1991), 365--421.

\bibitem[LW17]{LW17} L. Luo and W. Wang, {\em The q-Schur algebras and q-Schur dualities of finite type}, arXiv:1710.10375

\bibitem[Le99]{Le99}
G. Letzter,
{\em Symmetric pairs for quantized enveloping algebras}, J. Algebra {\bf 220} (1999), 729--767.

\bibitem[Le02]{Le02}
G. Letzer,
{\em Coideal subalgebras and quantum symmetric pairs},
New directions in Hopf algebras (Cambridge), MSRI publications, {\bf 43}, Cambridge Univ. Press, 2002, pp. 117--166.

\bibitem[L18]{L18} Y. Li, {\em Quiver varieties and symmetric pairs}, arXiv:1801.06071.


\bibitem[LW19]{LW19} M. Lu and W. Wang, {\em Hall algebras and quantum symmetric pairs I: Foundations}, arXiv:1901.11446.


\bibitem[Ri90]{Ri90}C. Ringel, {\em Hall algebras and quantum groups}, Invent. Math. {\bf 101} (1990), 583--591.


\bibitem[R08]{R08}R. Rouquier, {\em 2-Kac-Moody Algebras}, arXiv:0812.5023.

\bibitem[V]{V}  E. Vasserot, {\em  Representations de groupes quantiques et permutations}, Annales Sci. ENS,{\bf 26},1993,747--773.

\bibitem[V98]{V98} E. Vasserot, {\em  Affine quantum groups and equiveriant K-theory}, Transformation Groups, {\bf 3}, Issue 3,(1998), 269--299.
\end{thebibliography}
\end{document}